\newcommand{\xxx}[2][k]{#2_1\times\cdots\times #2_{#1}}
\let\reff=\ref  \let\eqreff=\eqref
\newcommand{\nnewpage}{\bigskip}
\renewenvironment{enumerate}[1][]
{\begin{enumerat}[#1]\setlength{\itemsep}{6pt}}{\end{enumerat}}
\newenvironment{enuma}{\begin{enumerate}[{\rm(a) }]}{\end{enumerate}}
\renewenvironment{itemize}
{\begin{itemiz}\setlength{\itemsep}{6pt} \setlength{\itemindent}{-10pt} 
}{\end{itemiz}}
\definecolor{darkgreen}{rgb}{0,0,0}
\definecolor{bluegreen}{rgb}{0,0.2,0.8}
\definecolor{darkred}{rgb}{0.8,0,0}
\definecolor{newercolor}{rgb}{0.2,0,1}
\definecolor{darkyellow}{rgb}{0.7,0.7,0}
\definecolor{orange}{rgb}{1.0,0.5,0}
\definecolor{darkorange}{rgb}{0.8,0.4,0}
\long\def\new#1{{\color{darkgreen}#1}}
\newenvironment{New}{\begingroup\color{darkgreen}}{\endgroup}
\numberwithin{table}{section}
\newcommand{\boldd}[1]{{\mathversion{bold}\textbf{#1}}}
\newlength{\short}
\newcommand{\4}[1]{\widebar{#1}}
\newcommand{\5}[1]{\widehat{#1}}
\def\pair[#1,#2]{[\hskip-1.5pt[#1,#2]\hskip-1.5pt]}
\let\oldcirc=\circ
\renewcommand{\circ}{\mathchoice
    {\mathbin{\scriptstyle\oldcirc}}{\mathbin{\scriptstyle\oldcirc}}
    {\mathbin{\scriptscriptstyle\oldcirc}}
    {\mathbin{\scriptscriptstyle\oldcirc}}}
\def\beq#1\eeq{\begin{equation*}#1\end{equation*}}
\def\beqq#1\eeqq{\begin{equation}#1\end{equation}}
\numberwithin{equation}{section}
\newtheorem{Thm}{Theorem}[section]
\newtheorem{Prop}[Thm]{Proposition}
\newtheorem{Cor}[Thm]{Corollary}
\newtheorem{Lem}[Thm]{Lemma}
\newtheorem{Ex}[Thm]{Example}
\newtheorem{Rmk}[Thm]{Remark}
\newtheorem{Th}{Theorem}
\theoremstyle{definition}
\newtheorem{Defi}[Thm]{Definition}
\newtheorem{DefNot}[Thm]{Definition-Notation}
\newcommand{\widebar}[1]
      {\overset{{\mskip3mu\leaders\hrule height0.4pt\hfill\mskip3mu}}{#1}
      \vphantom{#1}}
\newcounter{let} \setcounter{let}{0}
\loop\stepcounter{let}
\edef\csname cal\alph{let}\endcsname%
\loop\stepcounter{let}
\edef\csname scr\alph{let}\endcsname%
\newcommand{\tdef}[2][]{\expandafter\newcommand\csname#2\endcsname%
{#1\textup{#2}}}
\newcommand{\fdef}[1]{\expandafter\newcommand\csname#1\endcsname%
{\mathfrak{#1}}}
\newcommand{\bbdef}[1]{\expandafter\newcommand%
\csname#1\endcsname{\mathbb{#1}}}
\newcommand{\itdef}[1]{\expandafter\newcommand\csname#1\endcsname%
{\textit{#1}}}
\newcommand{\sminus}{\smallsetminus}
\newcommand{\lie}[3]{\def\test{#2}\def\tst{G}\ifx\test\tst{{}^{#1}#2_{#3}}
\else{{}^{#1}\!#2_{#3}}\fi}
\renewcommand{\*}{\,\lower6pt\hbox{\Large{\textup{*}}}\,}
\newcommand{\syl}[2]{\textup{Syl}_{#1}(#2)}
\newcommand{\sylp}[1]{\syl{p}{#1}}
\renewcommand{\Im}{\textup{Im}}
\newcommand{\autf}{\Aut_{\calf}}
\newcommand{\outf}{\Out_{\calf}}
\newcommand{\homf}{\Hom_{\calf}}
\newcommand{\isof}{\Iso_{\calf}}
\newcommand{\defeq}{\overset{\textup{def}}{=}}
\newcommand{\mxfoura}[8]{\left(\begin{smallmatrix}#1&#2&#3&#4\\#5&#6&#7&#8}
\newcommand{\mxfourb}[8]{\\#1&#2&#3&#4\\#5&#6&#7&#8\end{smallmatrix}\right)}
\let\emptyset=\varnothing
\renewcommand{\:}{\colon}
\newcommand{\nsg}{\trianglelefteq}
\newcommand{\til}[1]{\widetilde{#1}}
\let\too=\longrightarrow
\newcommand{\gen}[1]{{\langle}#1{\rangle}}
\newcommand{\longleft}[1]{\;{\leftarrow%
\count255=0 \loop \mathrel{\mkern-6mu}%
    \relbar\advance\count255 by1\ifnum\count255<#1\repeat}\;}
\newcommand{\longright}[1]{\;{\count255=0 \loop \relbar\mathrel{\mkern-6mu}%
    \advance\count255 by1\ifnum\count255<#1\repeat\rightarrow}\;}
\newcommand{\Right}[2]{\overset{#2}{\longright#1}}
\newcommand{\RIGHT}[3]{\mathrel{\mathop{\kern0pt\longright#1}
        \limits^{#2}_{#3}}}
\newcommand{\LEFT}[3]{\mathrel{\mathop{\kern0pt\longleft#1}\limits^{#2}_{#3}}
}
\newcommand{\dRIGHT}[3]{\mathrel{%
   \mathop{\vcenter{\baselineskip=0pt\hbox{$\kern0pt\longright#1$}%
   \hbox{$\kern0pt\longright#1$}}}\limits^{#2}_{#3}}}
\newcommand{\LRIGHT}[3]{\mathrel{%
   \mathop{\vcenter{\baselineskip=0pt\hbox{$\kern0pt\longleft#1$}%
   \hbox{$\kern0pt\longright#1$}}}\limits^{#2}_{#3}}}
\newcommand{\RLEFT}[3]{\mathrel{%
   \mathop{\vcenter{\baselineskip=0pt\hbox{$\kern0pt\longright#1$}%
   \hbox{$\kern0pt\longleft#1$}}}\limits^{#2}_{#3}}}
\newcommand{\onto}[1]{\;{\count255=0 \loop \relbar\mathrel{\mkern-6mu}%
    \advance\count255 by1
    \ifnum\count255<#1 \repeat \twoheadrightarrow}\;}
\begin{document}

\title{A Krull-Remak-Schmidt theorem for fusion systems}

\author{Bob Oliver}
\address{Universit\'e Sorbonne Paris Nord, LAGA, UMR 7539 du CNRS, 
99, Av. J.-B. Cl\'ement, 93430 Villetaneuse, France.}
\email{bobol@math.univ-paris13.fr}
\thanks{B. Oliver is partially supported by UMR 7539 of the CNRS}


\subjclass[2000]{Primary 20D20. Secondary 20D25, 20D40, 20D45}
\keywords{fusion systems, Sylow subgroups, products, automorphisms}

\begin{abstract} 
We prove that the factorization of a saturated fusion system over a 
discrete $p$-toral group as a product of indecomposable subsystems is 
unique up to normal automorphisms of the fusion system and permutations 
of the factors. In particular, if the fusion system has trivial center, 
or if its focal subgroup is the entire Sylow group, then this 
factorization is unique (up to the ordering of the factors). This result 
was motivated by questions about automorphism groups of products of 
fusion systems.
\end{abstract}

\maketitle

\bigskip


Let $\Z/p^\infty$ denote the union of an ascending sequence 
$\Z/p\le\Z/p^2\le\Z/p^3\le\cdots$ of finite cyclic $p$-groups. A discrete 
$p$-toral group is an extension of a group isomorphic to $(\Z/p^\infty)^r$ 
(some $r\ge0$) by a finite $p$-group. A saturated fusion system $\calf$ 
over a discrete $p$-toral group $S$ is a category whose objects are the 
subgroups of $S$, whose morphisms are injective homomorphisms between the 
objects, and which satisfies certain axioms first formulated by Puig 
\cite{Puig} when $S$ is a finite $p$-group, and by Broto, Levi, and this 
author \cite{BLO3} in the more general case. 

For each compact Lie group $G$ and each prime $p$, there is a saturated 
fusion system over a maximal discrete $p$-toral subgroup $S\le G$ that 
encodes the $G$-conjugacy relations between subgroups of $S$ (see 
\cite[\S\,9]{BLO3}). Likewise, each torsion linear group in characteristic 
different from $p$ (i.e., each subgroup $G\le\GL_n(K)$ such that $n\ge1$, 
$K$ is a field with $\textup{char}(K)\ne p$, and all elements of $G$ have 
finite order) has a maximal discrete $p$-toral subgroup $S$ unique up to 
conjugacy, and a saturated fusion system over $S$ that encodes $G$-conjugacy 
relations among subgroups of $S$ \cite[Theorem 8.10]{BLO3}. 

The Krull-Remak-Schmidt theorem for groups says, in the case of finite 
groups, that for any two factorizations of $G$ as a product of 
indecomposable subgroups, there is a normal automorphism of $G$ that 
sends the one to the other. Here, $\alpha\in\Aut(G)$ is normal if it 
commutes with all inner automorphisms; equivalently, if $\alpha$ is the 
identity on $[G,G]$ and induces the identity on $G/Z(G)$. We refer to 
\cite[Theorem 2.4.8]{Sz1} or \cite[Satz I.12.3]{Huppert} for the 
complete (much stronger) theorem. 

By analogy, if $\alpha$ is an automorphism of a saturated fusion system 
$\calf$ over a discrete $p$-toral group $S$ (see Definition 
\reff{d:Mor(E,F)}), $\alpha$ is normal if $\alpha|_{\foc(\calf)}=\Id$ and 
$[\alpha,S]\le Z(\calf)$. See Definitions \reff{d:Z+foc} and 
\reff{d:normal.end.} and Lemma \reff{l:[f,S]<Z(F)}(a) for more details (and 
the more general definition of normal endomorphisms). In these terms, our 
main theorem is formulated as follows.

\begin{Th} \label{ThA}
Let $\calf$ be a saturated fusion system over a discrete $p$-toral group 
$S$. Then there exist indecomposable fusion subsystems 
$\cale_1,\dots,\cale_k\le\calf$ ($k\ge1$) such that 
	\[ \calf = \xxx\cale. \] 
If $\calf=\cale_1^*\times\dots\times\cale_m^*$ is another such 
factorization, then 
$k=m$, and there is a normal automorphism $\alpha\in\Aut(\calf)$ and a 
permutation $\sigma\in\Sigma_k$ such that 
$\alpha(\cale_i)=\cale^*_{\sigma(i)}$ for each $i$.
\end{Th}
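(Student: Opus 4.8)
The plan is to follow the classical group-theoretic proof of Krull–Remak–Schmidt, replacing subgroups by fusion subsystems, normal subgroups by normal fusion subsystems, and exploiting the Fitting-style lemma that composites of normal endomorphisms are normal.

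\textbf{Step 1: Existence.} First I would establish the existence of a factorization $\calf=\cale_1\times\dots\times\cale_k$ into indecomposables. This should follow by a Noetherian-type induction on the "size" of $\calf$ — here one needs a well-founded notion of complexity for saturated fusion systems over discrete $p$-toral groups (roughly, the rank of the maximal torus together with the order of the quotient by the identity component, ordered lexicographically). If $\calf$ itself is indecomposable we are done; otherwise write $\calf=\cale\times\cale'$ nontrivially, observe that each factor has strictly smaller complexity (this needs that $S=T\times T'$ with both $T,T'$ proper, which holds since each $\cale_i$ is a fusion system over a \emph{nontrivial} discrete $p$-toral group — one must rule out trivial factors), and induct.

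\textbf{Step 2: The endomorphism ring machinery.} The heart of the proof is uniqueness. Given two factorizations $\calf=\xxx\cale=\cale_1^*\times\dots\times\cale_m^*$, let $\iota_j\colon\cale_j\to\calf$ and $\pi_j\colon\calf\to\cale_j$ (and similarly $\iota_j^*,\pi_j^*$) be the canonical inclusions and projections associated to each product decomposition. Set $\varphi_j=\iota_j\pi_j\in\End(\calf)$ and $\varphi_j^*=\iota_j^*\pi_j^*$; these are orthogonal idempotents summing (in the appropriate sense) to $\Id_{\calf}$. The key algebraic input — which I expect is proved in the body of the paper and which I would cite — is that an indecomposable saturated fusion system has a \emph{local} endomorphism monoid: any endomorphism is either an automorphism or is "nilpotent" in the sense that its iterates converge to a constant/trivial map, and in particular the normal endomorphisms of an indecomposable $\calf$ form a set closed under composition and "addition" with the property that if $\alpha+\beta$ (suitably interpreted) is an automorphism then one of $\alpha,\beta$ is. The normality hypothesis is what makes "addition" of endomorphisms even make sense here: two normal endomorphisms $\alpha,\beta$ with $[\alpha,S],[\beta,S]\le Z(\calf)$ can be combined, and Lemma~\reff{l:[f,S]<Z(F)} together with the cited properties of $\foc(\calf)$ and $Z(\calf)$ should guarantee the relevant composites are again normal.

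\textbf{Step 3: The exchange argument.} With this in hand, run the standard exchange: consider the composite $\pi_1^*\circ\iota_1\colon\cale_1\to\cale_1^*$ and $\pi_1\circ\iota_1^*\colon\cale_1^*\to\cale_1$. The sum over $j$ of $\pi_1\iota_j^*\pi_j^*\iota_1$ equals $\Id_{\cale_1}$; since $\End(\cale_1)$ is local, some single term $\pi_1\iota_j^*\pi_j^*\iota_1$ is an automorphism of $\cale_1$, whence (after renumbering) $\cale_1\cong\cale_1^*$ and moreover $\varphi_1^*$ restricts to an isomorphism $\cale_1\xrightarrow{\ \cong\ }\cale_1^*$ that is the identity up to a normal automorphism of $\calf$. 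Then define $\alpha_1\in\Aut(\calf)$ by $\alpha_1=\varphi_1^*+(\Id-\varphi_1)$ (i.e. replace the first factor using the exchange map), check it is a normal automorphism carrying $\cale_1$ onto $\cale_1^*$, and verify that $\calf=\cale_1^*\times(\cale_2\times\dots\times\cale_k)$. Now induct on $k$ applied to the fusion system $\cale_2\times\dots\times\cale_k\cong\cale_2^*\times\dots\times\cale_m^*$ (the latter isomorphism coming from $\calf/\cale_1^*$), obtaining $k-1=m-1$ and the permutation and normal automorphism on the remaining factors; assemble these into the desired $\alpha$ and $\sigma$.

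\textbf{Main obstacle.} The serious work — and where I would lean hardest on results from the body of the paper — is Step 2: making precise the "local endomorphism monoid" statement for indecomposable saturated fusion systems over discrete $p$-toral groups, and in particular setting up a usable notion of \emph{sum} of normal endomorphisms so that the finite-group exchange manipulations transfer verbatim. Unlike the group case there is no ambient ring $\End(\calf)$; one works instead inside the abelian group of normal endomorphisms (or normal automorphisms modulo something), and one must check that all the idempotents $\varphi_j$, $\varphi_j^*$ and their composites land in the locus where addition is defined and behaves correctly — this is presumably exactly the content of the lemmas on $\foc(\calf)$, $Z(\calf)$, and normal endomorphisms referenced before the theorem statement. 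A secondary technical point is handling the discrete $p$-toral (as opposed to finite) setting in the descending-chain/nilpotency argument: one needs that an endomorphism of an indecomposable $\calf$ which is not an automorphism genuinely becomes "small," which uses saturation and the structure theory of discrete $p$-toral groups (no infinite descending chains once one fixes the component structure).
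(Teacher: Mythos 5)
Your plan is essentially the paper's own proof: existence via the artinian property of discrete $p$-toral groups (Proposition \ref{p:exists.fact.}), and uniqueness by the Suzuki-style exchange argument built on summable normal endomorphisms, the Fitting-type dichotomy for indecomposables (Lemma \ref{l:normal.end.}), and the summability/distributivity lemmas of Sections 2--4, exactly as assembled in Theorem \ref{t:KRS}. The one caveat is that the ``automorphism or (locally) nilpotent'' dichotomy is proved, and needed, only for \emph{normal} endomorphisms (your opening claim for arbitrary endomorphisms is not established), but since your exchange step only applies it to summands of the identity, which are automatically normal, this does not affect the argument.
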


The existence of a factorization into a product of indecomposables is 
elementary, and is shown in Proposition \reff{p:exists.fact.}. The 
uniqueness part of Theorem \reff{ThA} is a special case of Theorem 
\reff{t:KRS} (the case where $\Omega=1$), and our proof of that theorem is 
adapted directly from that in \cite{Sz1} of the Krull-Remak-Schmidt theorem 
for groups. It is mostly a question of finding good definitions and 
properties of commuting fusion subsystems and normal endomorphisms of 
fusion systems; once this has been done it is straightforward to translate 
the proof of Theorem 2.4.8 in \cite{Sz1} into this situation.

As a special case, when $Z(\calf)=1$ or $\foc(\calf)=S$, Theorem \reff{ThA} 
says that the factorization of $\calf$ is unique: that $\calf$ is the 
product of all of its indecomposable direct factors. As one consequence of 
this (Corollary \reff{c:Aut(F1xF2)}), if $\calf=\xxx\cale$ where the 
$\cale_i$ are indecomposable (and $Z(\calf)=1$ or $\foc(\calf)=S$), then 
$\Aut(\calf)$ is a semidirect product of $\prod_{i=1}^k\Aut(\cale_i)$ with 
a certain subgroup of $\Sigma_k$. 

This work was originally motivated by questions about automorphisms of 
products of fusion systems that arose during joint work with Carles Broto, 
Jesper M{\o}ller, and Albert Ruiz \cite{BMOR}. It turned out that a special 
case of Theorem \reff{ThA} was sufficient for our purposes in that paper: a 
case which had been proven earlier in \cite[Proposition 3.6]{AOV1}. But 
this led to the question of whether a stronger result of that type might 
also be true. 

\begin{New} 
We have been asked whether (and how easily) Theorem \ref{ThA}, when 
restricted to fusion systems realized by finite groups, can be proven as a 
consequence of the Krull-Remak-Schmidt theorem for finite groups. When 
$p=2$ and $O^{2'}(\calf)=\calf$, this can be done using a theorem of 
Goldschmidt on strongly closed $2$-subgroups of a finite group 
\cite[Theorem A]{Goldschmidt}. When $p=2$ but $O^{2'}(\calf)<\calf$, it can 
be shown using similar, but much more complicated arguments. When $p$ is an 
odd prime, any such argument probably requires the classification of 
finite simple groups. See the end of Section 5 for a more detailed 
discussion about this question. 
\end{New}

We have tried to write this while keeping in mind those readers who are 
interested only in the case of fusion systems over finite $p$-groups. 
For this reason, as far as possible, the extra complications that arise 
in the infinite case have been put into Section 1, which can easily be 
skipped by those interested only in the finite case and familiar with 
the basic definitions. Morphisms and commuting subsystems 
of fusion systems are defined and studied in Section 2, sums of 
endomorphisms in Section 3, and normal endomorphisms in Section 4. The 
main theorem and two corollaries are proven in Section 5.

\new{We take the opportunity here to thank the referee for carefully 
reading this paper, and for making several very helpful suggestions for 
improvements.}

\noindent{\textbf{Notation and conventions:}} Composition of functions and 
functors is always from right to left. When $G$ is a group and $P,Q\le G$, 
we let $\Hom_G(P,Q)$ denote the set of (injective) homomorphisms from $P$ 
to $Q$ induced by conjugation in $G$, and set 
$\Aut_G(P)=\Hom_G(P,P)\cap\Aut(P)$. 


\nnewpage

\section{\texorpdfstring{Fusion systems over discrete $p$-toral groups}
{Fusion systems over discrete p-toral groups}}

In this section, we collect some results that are needed mostly when 
handling fusion systems over \emph{infinite} discrete $p$-toral groups. So 
readers who are already familiar with fusion systems over finite 
$p$-groups and only interested in that case can easily skip it.

As defined in the introduction, $\Z/p^\infty$ denotes the union of the 
chain of cyclic $p$-groups $\Z/p<\Z/p^2<\Z/p^3<\cdots$. It can also 
be identified with the quotient group $\Z[\frac1p]/\Z$, or with the 
group of complex roots of unity of $p$-power order. 

\begin{Defi} \label{d:discr.p-tor.}
A \emph{discrete $p$-toral group} is a group $S$, with normal subgroup 
$S_0\nsg{}S$, such that $S_0$ is isomorphic to a finite product of copies 
of $\Z/p^\infty$ and $S/S_0$ is a finite $p$-group. The subgroup $S_0$ will 
be called the \emph{identity component} of $S$, and $S$ will be called 
\emph{connected} if $S=S_0$.  Define $|S|=(\rk(S_0),|S/S_0|)$, where 
$\rk(S_0)=k$ if $S_0\cong(\Z/p^\infty)^k$. 
\end{Defi}

The identity component $S_0$ of a discrete $p$-toral group $S$ is 
characterized as the subset of all infinitely $p$-divisible elements in 
$S$, and also as the minimal subgroup of finite index in $S$. So $|S|$ 
depends only on $S$ itself as a discrete group. We regard the order of a 
discrete $p$-toral group as an element of $\N^2$ with the lexicographical 
ordering; i.e., $|S|\le|S^*|$ if and only if either $\rk(S)<\rk(S^*)$, or 
$\rk(S)=\rk(S^*)$ and $|S/S_0|\le|S^*/S^*_0|$.  Note that $S^*\le{}S$ 
implies $|S^*|\le|S|$, with equality only if $S^*=S$. 

Recall that a group is \emph{artinian} if each descending sequence of 
subgroups of $S$ becomes constant. Discrete $p$-toral groups can be 
characterized as follows:

\begin{Lem}[{\cite[Proposition 1.2]{BLO3}}] \label{l:artinian}
A group is discrete $p$-toral if and only if it is artinian, and every 
finitely generated subgroup is a finite $p$-group.
\end{Lem}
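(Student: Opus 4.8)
The plan is to prove the two implications separately; essentially all of the content lies in the ``if'' direction.

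For the ``only if'' direction, suppose $S$ is discrete $p$-toral with identity component $S_0\cong(\Z/p^\infty)^r$ and $S/S_0$ a finite $p$-group. First I would observe that $S_0$, being abelian, is artinian as a group if and only if it is artinian as a $\Z$-module, and $(\Z/p^\infty)^r$ is a finite direct sum of copies of the artinian module $\Z/p^\infty$ (whose submodules form the well-ordered chain $0\le\Z/p\le\Z/p^2\le\cdots$), hence is artinian. Then, given a descending chain $H_1\ge H_2\ge\cdots$ of subgroups of $S$, the chains $(H_i\cap S_0)_i$ and $(H_iS_0/S_0)_i$ both stabilize (the latter because it descends inside the finite group $S/S_0$), and once both have stabilized the standard element chase forces $H_i=H_{i+1}$; so $S$ is artinian. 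For the second condition, if $H\le S$ is finitely generated then $H\cap S_0$ has finite index in $H$ (since $H/(H\cap S_0)$ embeds in $S/S_0$), hence is itself finitely generated (finite-index subgroups of finitely generated groups are finitely generated), hence is a finitely generated, and therefore finite, subgroup of $(\Z/p^\infty)^r$; so $H$ is an extension of a finite $p$-group by a finite $p$-group, thus a finite $p$-group.

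For the ``if'' direction, suppose $S$ is artinian and every finitely generated subgroup of $S$ is a finite $p$-group. Then every element of $S$ has $p$-power order and $S$ is locally finite, so $S$ is a locally finite $p$-group satisfying the minimal condition on subgroups. Using the minimal condition I would choose $S_0\le S$ minimal among the subgroups of finite index; its normal core is an intersection of finitely many finite-index subgroups, hence has finite index, hence equals $S_0$ by minimality, so $S_0\nsg S$ and $S_0$ has no proper subgroup of finite index. Since $S/S_0$ is then automatically a finite $p$-group, it remains only to show $S_0\cong(\Z/p^\infty)^r$ for some $r\ge0$.

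This last point is where I expect the real work. It amounts to the \v{C}ernikov structure theorem: a locally finite $p$-group satisfying the minimal condition contains a normal divisible abelian subgroup of finite index, necessarily isomorphic to $(\Z/p^\infty)^r$ with $r$ finite because the ambient group is artinian. Applied to $S_0$, which has no proper subgroup of finite index, this shows that $S_0$ itself \emph{is} that divisible abelian subgroup, i.e.\ $S_0\cong(\Z/p^\infty)^r$, which completes the proof. I would either quote this structure theory directly (from a standard reference on \v{C}ernikov or locally finite groups), or, for a self-contained argument, note that the only genuinely nontrivial input is the existence of that finite-index divisible abelian subgroup — the passage from an artinian locally finite $p$-group to a \v{C}ernikov group — after which everything is the classical structure of artinian, and of divisible, abelian torsion groups, together with the two elementary observations that ``no proper finite-index subgroup'' forces $pS_0=S_0$ (killing any finite part) and that ``artinian'' bounds the rank. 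So the main obstacle is exactly the \v{C}ernikov-type input producing the finite-index divisible abelian subgroup.
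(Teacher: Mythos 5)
Your proof is correct, and it follows what is essentially the paper's own route: the paper gives no argument here at all, quoting the statement directly from \cite{BLO3} (Proposition 1.2), whose proof likewise consists of the elementary ``only if'' direction together with an appeal to the \v{C}ernikov/minimal-condition structure theory for locally finite $p$-groups taken from the literature on locally finite groups. So delegating exactly that step to a standard reference, as you do, is the same division of labor as in the cited source, and the surrounding elementary reductions (stabilizing the chains $H_i\cap S_0$ and $H_iS_0/S_0$, the finite-index minimal subgroup $S_0$ and its normal core, and the observation that divisibility plus the minimal condition forces $S_0\cong(\Z/p^\infty)^r$) are all carried out correctly.
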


In particular, each subgroup or quotient group of a discrete $p$-toral 
group is again discrete $p$-toral, and each extension of one discrete 
$p$-toral group by another is discrete $p$-toral \cite[Lemma 1.3]{BLO3}. If 
$Q\le P$ is a pair of discrete $p$-toral groups, then 
$\Out_P(Q)\defeq\Aut_P(Q)/\Inn(Q)$ ($\cong N_P(Q)/QC_P(Q)$) is a finite 
$p$-group (see \cite[Proposition 1.5(c)]{BLO3}). 

Before defining fusion systems, we prove two technical results about 
discrete $p$-toral groups that are elementary or well known for finite 
$p$-groups. The first deals with complications that arise because discrete 
$p$-toral groups need not be nilpotent.

\begin{Lem} \label{l:Ln(S)}
Let $S$ be a discrete $p$-toral group, and let $S_0$ be its identity 
component. Define inductively subgroups $\til{Z}_n(S)\nsg S$, for $n\ge0$, by 
setting $\til{Z}_0(S)=1$, $\til{Z}_1(S)=\Omega_1(Z(S))$, and 
$\til{Z}_n(S)/\til{Z}_{n-1}(S)=\Omega_1(Z(S/\til{Z}_{n-1}(S)))$. Set 
$\til{Z}_\infty(S)=\bigcup_{n=1}^\infty \til{Z}_n(S)$. Then 
\begin{enuma} 

\item $\til{Z}_\infty(S)\ge S_0$ and $C_S(\til{Z}_\infty(S))\le \til{Z}_\infty(S)$; and 

\item if $\alpha\in\Aut(S)$ has finite order prime to $p$, and 
$[\alpha,\til{Z}_n(S)]\le \til{Z}_{n-1}(S)$ for each $n\ge1$, then $\alpha=\Id_S$.

\end{enuma}
\end{Lem}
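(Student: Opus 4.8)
\smallskip

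The plan is to reduce everything to the identity component $S_0$ together with the finite quotient $S/S_0$, and then exploit the fact that the $\til Z_n(S)$ form an ascending chain exhausting a subgroup containing $S_0$. For part (a), the key observation is that $S_0$ is nilpotent (indeed abelian), so iterating the construction $\til Z_n(S)$ inside $S$ cannot "get stuck" below $S_0$: the standard argument that $Z(P)\ne1$ for a nontrivial finite $p$-group $P$ applies to $P/S_0$ as well, since $P/S_0$ is a finite $p$-group, so the chain $\til Z_n(S)$ strictly increases until it reaches a subgroup $T$ with $S/T$ having no normal subgroup of order $p$ in its center — which forces $S/T$ to be torsion-free divisible, hence $S=T\ge S_0$ after finitely many steps (using artinianness, Lemma 1.3, to guarantee termination in the finite-quotient part). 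Wait — more carefully: once $\til Z_n(S)\ge S_0$, the quotient $S/\til Z_n(S)$ is a finite $p$-group, for which the center is nontrivial unless the quotient is trivial; so after finitely many further steps $\til Z_\infty(S)=S$ if $S/S_0$ is... no, that's too strong. The honest statement is just $\til Z_\infty(S)\ge S_0$, and that follows because $S_0\le Z(S/?)$... Let me restate: I would first show by induction that $\til Z_n(S)\cap S_0$ increases to $S_0$ (as $S_0\cong(\Z/p^\infty)^k$ is a union of finite characteristic subgroups on which $S$ acts, and $\Omega_1$ of a nontrivial such is nontrivial and central in a suitable quotient), and separately track the finite part $\til Z_n(S)S_0/S_0$ using nilpotence of finite $p$-groups. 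Then $C_S(\til Z_\infty(S))\le\til Z_\infty(S)$ follows by the usual centralizer-in-upper-central-series argument: if $x\in C_S(\til Z_\infty(S))\setminus\til Z_\infty(S)$, pick the least $n$ with $x\in\til Z_n(S)S_0$ or argue in $S/\til Z_{n-1}(S)$ that $x$ would centralize $\Omega_1(Z(S/\til Z_{n-1}(S)))$ yet not lie in it, contradicting maximality — the cleanest route is: $[C_S(\til Z_\infty(S)),\til Z_{n}(S)]\le[C_S(\til Z_\infty(S)),\til Z_\infty(S)]=1$, and induct to show $C_S(\til Z_\infty(S))\le\til Z_n(S)$ whenever $\til Z_n(S)$ reaches far enough.

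\smallskip

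For part (b), I would argue by a downward/upward induction using coprimality. Since $\alpha$ has order prime to $p$ and $\til Z_n(S)/\til Z_{n-1}(S)$ is an elementary abelian $p$-group (it is $\Omega_1$ of the center of a discrete $p$-toral group, hence a product of copies of $\Z/p$, possibly infinite) on which $\alpha$ acts, and by hypothesis $\alpha$ acts trivially on each such quotient, coprime action (the fact that $\Hom(\Z/p^a,\Z/p^b)$-type obstructions vanish, i.e. $H^1$ of a $p'$-group with coefficients in a $p$-group is trivial, or more elementarily that a $p'$-automorphism fixing a normal subgroup and acting trivially on the quotient of an abelian-by-... ) forces $\alpha$ to act trivially on $\til Z_n(S)$ for all $n$ by induction on $n$: given triviality on $\til Z_{n-1}(S)$, the map $x\mapsto x^{-1}\alpha(x)$ sends $\til Z_n(S)$ into $\til Z_{n-1}(S)$ and is a "crossed" map that must be trivial because $\alpha$ is a $p'$-element acting on the abelian group $\til Z_n(S)$ (not necessarily abelian, but $\til Z_n(S)/\til Z_{n-1}(S)$ is central in $S/\til Z_{n-1}(S)$, so $\til Z_n(S)$ is abelian-by-$\til Z_{n-1}(S)$... here one uses the standard lemma: a $p'$-automorphism of a discrete $p$-toral group $P$ that is trivial on $N\nsg P$ and on $P/N$ is trivial — proved by noting $\alpha$ lifts to act on the (abelian, since built from roots of unity) relevant sections with $(\alpha-1)^2$-behavior and a $p'$-order element with $(\alpha-1)$ nilpotent on a $p$-group is trivial). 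Hence $\alpha$ is trivial on $\til Z_\infty(S)$, and then by part (a), $\alpha$ centralizes $\til Z_\infty(S)\ge C_S(\til Z_\infty(S))^{\,}$... precisely: $\alpha$ acts trivially on $\til Z_\infty(S)$ means that for each $s\in S$, $s^{-1}\alpha(s)$ centralizes $\til Z_\infty(S)$ (since $\alpha$ is trivial there), hence $s^{-1}\alpha(s)\in C_S(\til Z_\infty(S))\le\til Z_\infty(S)$, so $\alpha$ acts trivially on $S/\til Z_\infty(S)$ too — but $S/\til Z_\infty(S)$ is... one more application of the same coprime-action lemma to the pair $\til Z_\infty(S)\nsg S$ gives $\alpha=\Id_S$.

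\smallskip

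The main obstacle I anticipate is the termination/exhaustion bookkeeping in part (a): unlike the finite case, one must simultaneously control the (possibly infinite-rank, but actually fixed finite rank) torus part $S_0$ and the finite part $S/S_0$, and be careful that "$\Omega_1(Z(-))$" captures enough of $S_0$ at each stage — in particular verifying that the chain $\til Z_n(S)$ genuinely climbs past every finite layer of $S_0$ rather than stabilizing. I would handle this by invoking Lemma 1.3 (artinian, finitely generated subgroups finite $p$-groups) to see each $\til Z_n(S)$ is discrete $p$-toral and comparing orders $|\til Z_n(S)|\in\N^2$, which strictly increase while the chain is non-stationary, so the process stabilizes; stabilization at $T$ with $S_0\not\le T$ would give a nontrivial discrete $p$-toral quotient $S_0 T/T$ with trivial $\Omega_1$ of its intersection with the center — impossible for a nontrivial group built from copies of $\Z/p^\infty$, since such a group always has nontrivial $p$-torsion that is characteristic. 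The coprime-action lemma used repeatedly in part (b) is routine but should be stated cleanly once (a $p'$-automorphism of a discrete $p$-toral group trivial on a normal subgroup and on the corresponding quotient is the identity), perhaps citing \cite[Proposition 1.5]{BLO3} or proving it directly from the finite case plus a colimit argument over finite characteristic subgroups of $S_0$.
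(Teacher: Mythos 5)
Your plan for part (b) is essentially sound, and its endgame is even a bit more elementary than the paper's: once $\alpha$ is trivial on each $\til{Z}_n(S)$ (each of these is a \emph{finite} $p$-group, since every layer $\Omega_1(Z(S/\til{Z}_{n-1}(S)))$ is finite elementary abelian), hence on $T\defeq\til{Z}_\infty(S)$, the centralizer statement of (a) gives $s^{-1}\alpha(s)\in C_S(T)\le T$ for all $s$, and your two-term coprime lemma for discrete $p$-toral groups is true with a one-line proof: writing $c(s)=s^{-1}\alpha(s)\in T$ and using $\alpha|_T=\Id$ one gets $\alpha^k(s)=s\,c(s)^k$, so $c(s)^n=1$ with $n$ prime to $p$ while $c(s)$ has $p$-power order, whence $\alpha=\Id_S$. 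This avoids the paper's appeal to \cite[Lemma 1.2]{OV2} and the $H^1$ exponent bound. But (b) rests entirely on part (a), and that is where your proposal has genuine gaps. Your termination bookkeeping is based on a false premise: since each $\til{Z}_n(S)$ is finite, the chain $\til{Z}_n(S)$ \emph{never} stabilizes when $S_0\ne1$ (artinian means the descending chain condition; increasing orders in $\N^2$ do not force an ascending chain to stop), so the contradiction you propose to derive ``at the stabilization point $T$ with $S_0\nleq T$'' never arises, and in any case its justification (``such a group always has nontrivial characteristic $p$-torsion'') only produces a nontrivial finite normal subgroup of $S/T$ --- it does not by itself meet $\Omega_1(Z(S/T))$. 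The mechanism that actually drives the inclusion $S_0\le\til{Z}_\infty(S)$, and which is missing from your sketch, is the fixed-point argument: for a finite subgroup $P\le S_0$ normal in $S$ whose image in $S/\til{Z}_{n-1}(S)$ is nontrivial, that image is centralized by the image of $S_0$ (as $S_0$ is abelian), so $S$ acts on it through the finite $p$-group $S/S_0$, and the nontrivial fixed points give elements of $\Omega_1(Z(S/\til{Z}_{n-1}(S)))$ in the image of $P$; this is how one shows $P\cap\til{Z}_n(S)>P\cap\til{Z}_{n-1}(S)$ and hence, applied to $P=\Omega_m(S_0)$, that the chain climbs past every finite layer of $S_0$.

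The centralizer claim of (a) is also not proved by what you wrote. The ``cleanest route'' you propose, showing $C_S(\til{Z}_\infty(S))\le\til{Z}_n(S)$ for some finite $n$, is false in general (already for $S$ abelian infinite, $C_S(\til{Z}_\infty(S))=S$ while every $\til{Z}_n(S)$ is finite), and the alternative phrase ``$x$ would centralize $\Omega_1(Z(S/\til{Z}_{n-1}(S)))$ yet not lie in it, contradicting maximality'' is vacuous, since every element centralizes a central subgroup. The point that genuinely requires work in the infinite case, and which the paper supplies, is the descent from the union back to a finite stage: having shown $S_0\le T$, the quotient $S/T$ is a finite $p$-group, so if $C_S(T)\nleq T$ there is $x\in S\sminus T$ with $[x,T]=1$, $[x,S]\le T$, and $x^p\in T$; then, because $[x,T]=1$ and $S/T$ is finite, the finitely many commutators $[x,s_i]$ (one for each coset of $T$) together with $x^p$ all lie in a single $\til{Z}_n(S)$, which is normal in $S$, so in fact $[x,S]\le\til{Z}_n(S)$ and $x\in\til{Z}_{n+1}(S)\le T$, a contradiction. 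Nothing in your sketch produces this step, and without it neither the centralizer statement nor, consequently, your proof of (b) goes through.
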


\begin{proof} \textbf{(a) } For each pair of finite subgroups $P,Q\le S_0$, 
both normal in $S$ and such that $P\nleq Q$, we have $(PQ/Q)\cap 
\til{Z}_1(S/Q)\ge \Omega_1(C_{PQ/Q}(S/S_0))$, where 
$\Omega_1(C_{PQ/Q}(S/S_0))\ne1$ since $PQ/Q$ and $S/S_0$ are both finite 
$p$-groups and $PQ/Q\ne1$. When $Q=\til{Z}_{n-1}(S)$ for $n\ge1$, this says 
that $P\cap \til{Z}_n(S)>P\cap \til{Z}_{n-1}(S)$ whenever 
$\til{Z}_{n-1}(S)\ngeq P$, and hence that $\til{Z}_m(S)\ge P$ for $m$ 
sufficiently large. In particular, $\til{Z}_\infty(S)\ge\Omega_n(S_0)$ for 
each $n$, and so $\til{Z}_\infty(S)\ge S_0$. 

Set $T=\til{Z}_\infty(S)$ and $U=C_S(T)$ for short, and assume $U\nleq T$. 
Then $1\ne UT/T\nsg S/T$ where $S/T$ is a finite $p$-group \new{(recall 
$T=\til{Z}_\infty(S)\ge S_0$)}, so $(UT/T)\cap\Omega_1(Z(S/T))\ne1$. 
\new{In other words, there is $x\in S\sminus T$ such that 
	\[ [x,T]=1, \qquad [x,S]\le T, \qquad\textup{and}\qquad x^p\in T. \] 
Since} $[x,T]=1$ and $S/T$ is finite (and since $T=\bigcup_{m=1}^\infty 
\til{Z}_m(S)$), there is $n\ge1$ such that $[x,S]\le \til{Z}_n(S)$ and 
$x^p\in \til{Z}_n(S)$. Then $x\in \til{Z}_{n+1}(S)\le T$, contradicting our 
assumption \new{that $x\notin T$}. We conclude that $U\le T$; i.e., that 
$C_S(\til{Z}_\infty(S))\le \til{Z}_\infty(S)$.

\smallskip

\noindent\textbf{(b) } Assume $\alpha\in\Aut(S)$ has finite order prime to 
$p$ and induces the identity on each quotient group $\til{Z}_n(S)/\til{Z}_{n-1}(S)$ 
(all $n\ge1$). Since each of those quotients is a finite $p$-group, 
$\alpha|_{\til{Z}_n(S)}=\Id$ for each $n$ by \cite[Theorem 5.3.2]{Gorenstein}, 
and hence $\alpha|_{\til{Z}_\infty(S)}=\Id$. So by \cite[Lemma 1.2]{OV2}, and 
since $C_S(\til{Z}_\infty(S))\le \til{Z}_\infty(S)$ by (a), the class 
$[\alpha]\in\Out(S)$ is in the image of a certain injective homomorphism 
$\eta\:H^1(S/\til{Z}_\infty(S);Z(\til{Z}_\infty(S)))\too\Out(S)$. Each element in 
$H^1(S/\til{Z}_\infty(S);Z(\til{Z}_\infty(S))$ has order dividing $|S/\til{Z}_\infty(S)|$ 
(see, e.g., Corollary 2 to \cite[Theorem 2.7.26]{Sz1}), and hence 
$[\alpha]=1$ and $\alpha\in\Inn(S)$. But then $\alpha=1$, since it has 
order prime to $p$ while $\Inn(S)\cong S/Z(S)$ is discrete $p$-toral. 
\end{proof}

The following generalization of nilpotent endomorphisms will be needed. 

\begin{Defi} \label{d:loc.nilp.}
Let $S$ be a discrete $p$-toral group. An endomorphism $f\in\End(S)$ is 
\emph{locally nilpotent} if for each $x\in S$, there is $n\ge1$ such that 
$f^n(x)=1$. 
\end{Defi}

Thus $f\in\End(S)$ is locally nilpotent if and only if 
$S=\bigcup_{n=1}^\infty\Ker(f^n)$. 

If $S$ is finite, then clearly all locally nilpotent endomorphisms are 
nilpotent. As a simple example of an endomorphism that is locally 
nilpotent but not nilpotent, let $S$ be any discrete $p$-toral group 
that is abelian and infinite, and set $f=(x\mapsto x^p)\in\End(S)$.

\begin{Lem} \label{l:End(S)-2}
Let $S$ be an abelian discrete $p$-toral group, and assume $f\in\End(S)$ is 
surjective. Then there are unique subgroups $T,U\le S$ such that $S=T\times 
U$, $f|_T\in\Aut(T)$, $U$ is connected, and $f|_U\in\End(U)$ is locally 
nilpotent. 
\end{Lem}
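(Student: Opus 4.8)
The plan is to build $U$ as the ``locally nilpotent part'' of $S$ with respect to $f$ and $T$ as a complement on which $f$ restricts to an automorphism. Since $S$ is an artinian group (Lemma \ref{l:artinian}), the descending chain $S\ge f(S)\ge f^2(S)\ge\cdots$ stabilizes; but $f$ is surjective, so $f^n(S)=S$ for all $n$. Dually, consider the ascending chain $1\le\Ker(f)\le\Ker(f^2)\le\cdots$ and set $U_1=\bigcup_{n\ge1}\Ker(f^n)$, the maximal subgroup on which $f$ acts locally nilpotently. Since $f$ is surjective, for each $n$ one has $|S/\Ker(f^n)|\cdot|\Ker(f^n)|$-type bookkeeping forcing $f(\Ker(f^{n+1}))=\Ker(f^n)$, so $f$ maps $U_1$ onto $U_1$; thus $f|_{U_1}$ is surjective and locally nilpotent. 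Replacing $S$ by $S/U_1$, one checks $f$ induces an injective (hence, by artinian-ness applied to $S/U_1$ and surjectivity, bijective) endomorphism of the quotient. The subtlety is that $U_1$ need not be connected, so I would instead take $U=U_1{}_0$ (the identity component) together with a bit of the finite part, or more cleanly argue as follows.

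First I would reduce to the case where $S$ is either finite or connected. Write $S=S_0\times A$ for a finite $A$ (possible since $S$ is abelian discrete $p$-toral: $\Omega_1$ of each $\Z/p^\infty$ factor splits off, and one iterates—alternatively cite the structure theorem for artinian abelian $p$-groups). On the finite part, $f$ surjective implies $f$ bijective, so any locally nilpotent restriction there is trivial and $U$ must be connected as asserted. So the work is entirely on $S_0$, a finite product of copies of $\Z/p^\infty$, i.e.\ a divisible abelian $p$-group of finite rank, equivalently a finite-dimensional $\Q_p/\Z_p$-module; $\End(S_0)$ is then the ring of $r\times r$ matrices over $\Z_p$ acting on $(\Q_p/\Z_p)^r$. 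A surjective $f$ corresponds to a matrix $M\in M_r(\Z_p)$ whose reduction is surjective on $(\Q_p/\Z_p)^r$; the Fitting-type decomposition of $\Z_p^r$ with respect to $M$ into the part where $M$ is invertible over $\Z_p$ and the part where $M$ is topologically nilpotent (i.e.\ $\det$ a unit vs.\ $\det$ in $p\Z_p$, on the two pieces) gives $\Z_p^r = L_T\oplus L_U$, and dualizing (applying $\Hom(-,\Q_p/\Z_p)$, or just $(\Q_p/\Z_p)\otimes-$) yields $S_0=T_0\times U_0$ with $f|_{T_0}$ an automorphism and $f|_{U_0}$ locally nilpotent. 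Connectedness of $U=U_0$ is automatic.

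For uniqueness: if $S=T\times U=T'\times U'$ are two such decompositions, then $T=\bigcap_n f^n(S)$-type characterization does not immediately work since $f$ is surjective, but instead $U=\bigcup_n\Ker(f^n|_{?})$ is forced—more precisely, an element $x\in S$ lies in the ``locally nilpotent part'' iff $f^n(x)=1$ for some $n$, and in $T\times U$ the set of such $x$ is exactly $U$ (since $f|_T$ is an automorphism with no nontrivial kernel on any power). Hence $U=\{x\in S: f^n(x)=1\text{ for some }n\}$ is intrinsic, so $U=U'$; and then $T$, $T'$ are both complements to $U$ that map isomorphically onto $S/U$ under the projection, with $f$ acting compatibly, which pins down $T$ as $\bigcap_n f^n$ applied inside any complement—or directly: $T=\{x\in S : x\in f^n(S\setminus U\text{-coset stuff})\}$; cleanly, $T$ is the unique subgroup with $S=T\times U$ and $f(T)=T$, because any other $f$-invariant complement $T'$ satisfies $T'\cap U$ is $f$-invariant and locally nilpotent inside $T'\cong S/U$ where $f$ is an automorphism, forcing $T'\cap U=1$, hence $T'\cong S/U\cong T$ via projection, and chasing $f$-equivariance shows $T'=T$.

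The main obstacle I anticipate is handling the interaction between the finite part $A$ and the connected part $S_0$ cleanly—$f$ need not respect the splitting $S=S_0\times A$, so the matrix picture over $\Z_p$ is the right framework but one must be careful that ``$f$ surjective on $S$'' translates correctly (it does, because $S_0$ is characteristic and $f(S_0)=S_0$ by divisibility, so $f$ is surjective on both $S_0$ and $S/S_0\cong A$, and the finite part contributes nothing to $U$). Once everything is pushed to the divisible part and phrased via $\Z_p$-lattices, the Fitting decomposition and its uniqueness are standard.
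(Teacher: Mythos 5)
Your argument for the connected case (via $\Z_p$-lattices and the Fitting decomposition of $M\in M_r(\Z_p)$) and for the finite case is fine, and your identification $U=\bigcup_n\Ker(f^n)\le S_0$ is correct. The genuine gap is in the mixed case, where $S$ is neither finite nor connected: the lemma requires a complement $T$ to $U$ \emph{in all of $S$} with $f(T)=T$ and $f|_T\in\Aut(T)$, and your proposal never produces one. You only obtain $S_0=T_0\times U$, and since $f$ need not respect any splitting $S=S_0\times A$, you cannot take $T=T_0\times A$; ``the work is entirely on $S_0$'' is false for the $T$-part of the statement. Concretely, take $S=(\Z/p^\infty)\times(\Z/p)$ written additively, let $\iota$ identify $\Z/p$ with $\Omega_1(\Z/p^\infty)$, and set $f(x,a)=(px+\iota(a),a)$. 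Then $f$ is surjective, $f^n(x,a)=(p^nx+\iota(a),a)$, so $U=\bigcup_n\Ker(f^n)=\Z/p^\infty\times0=S_0$ and $T_0=1$; the unique admissible complement is the diagonal $T=\{(\iota(a),a)\}$, while $0\times\Z/p$ is not $f$-invariant. So after locating $U$, constructing the $f$-invariant complement $T\le S$ is precisely the remaining content of the lemma, and it is skipped. (Your uniqueness argument for $T$ has the same soft spot: ``chasing $f$-equivariance shows $T'=T$'' is exactly the statement that needs proof; the paper proves it by showing $\Omega_n(T^*)\le\bigcap_i\Im\bigl((f|_{\Omega_n(S)})^i\bigr)$.)

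The paper sidesteps the splitting problem entirely by performing the finite Fitting decomposition inside each $\Omega_n(S)$ --- which is finite, characteristic, and $f$-invariant --- setting $T_n=\bigcap_i\Im\bigl((f|_{\Omega_n(S)})^i\bigr)$ and $U_n=\bigcup_i\Ker\bigl((f|_{\Omega_n(S)})^i\bigr)$, and then taking unions over $n$; this produces $T$ and $U$ simultaneously in the mixed case with no choice of complement to $S_0$. Note also that the worry that drove you off your first (more promising) route is unfounded: $\bigcup_n\Ker(f^n)$ \emph{is} automatically connected. Indeed, surjectivity of $f$ gives $f(\Ker(f^{n+1}))=\Ker(f^n)$, so $\psi=f|_U$ is surjective; hence $U/\Ker(\psi^k)\cong U$ for all $k$, and choosing $k$ with $U_0\Ker(\psi^k)=U$ (possible since $U_0$ has finite index and $U=\bigcup_k\Ker(\psi^k)$) exhibits $U$ as a quotient of its identity component, hence connected --- this is exactly the paper's closing argument. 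If you repair the existence of $T$ (e.g.\ by the $\Omega_n$-by-$\Omega_n$ construction), the rest of your outline goes through.
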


\begin{proof} For each $n\ge1$, set $S_n=\Omega_n(S)$ and 
$f_n=f|_{S_n}$. Then $S_n$ is a finite abelian $p$-group, 
$\{\Im(f_n^i)\}_{i=1}^\infty$ is a decreasing sequence of subgroups of 
$S_n$, and $\{\Ker(f_n^i)\}_{i=1}^\infty$ is an increasing sequence. 
Set $T_n=\bigcap_{i=1}^\infty\Im(f_n^i)$ and 
$U_n=\bigcup_{i=1}^\infty\Ker(f_n^i)$. Since $S_n$ is finite, there is 
$k\ge1$ such that $T_n=\Im(f_n^k)$ and $U_n=\Ker(f_n^k)$, and so 
$|T_n||U_n|=|S_n|$. Also, $f_n(T_n)=T_n$ (so $f_n\in\Aut(T_n)$), 
$f_n|_{U_n}$ is a nilpotent endomorphism of $U_n$, and hence $T_n\cap 
U_n=1$ and $S_n=T_n\times U_n$.

From these properties, we see that $T_n\le T_{n+1}$ and $U_n\le 
U_{n+1}$ for all $n$. Set $U=\bigcup_{i=1}^\infty U_n$ and 
$T=\bigcup_{n=1}^\infty T_n$. Then $S=T\times U$, $f|_T\in\Aut(T)$, and 
$f|_U\in\End(U)$ is locally nilpotent. Note that 
$U=\bigcup_{i=1}^\infty\Ker(f^i)$.

Assume $S=T^*\times U^*$ is a second factorization, where 
$f|_{T^*}\in\Aut(T^*)$ and $f|_{U^*}$ is a locally nilpotent 
endomorphism. Then $U^*\le\bigcup_{i=1}^\infty\Ker(f^i)=U$. For each 
$n\ge1$, $f|_{\Omega_n(T^*)}\in\Aut(\Omega_n(T^*))$ since $f|_{T^*}$ is 
an automorphism, so 
$\Omega_n(T^*)\le\bigcap_{i=1}^\infty\Im(f_n^i)=T_n$, and hence 
$T^*=\bigcup_{i=1}^\infty\Omega_n(T^*)\le T$. Then $T^*=T$ and $U^*=U$ 
since $T^*\times U=T\times U$, proving that the decomposition is unique.

It remains to show that $U$ is connected; i.e., that 
$U\cong(\Z/p^\infty)^r$ for some $r\ge0$. Let $U_0\le U$ be the 
identity component of $U$. Set $\psi=f|_U\in\End(U)$ for short. Since 
$\psi$ is surjective, $U/\Ker(\psi^i)\cong U$ for each $i\ge1$. 
Since $U$ is the union of the $\Ker(\psi^i)$ and $U_0$ has finite index in 
$U$, there is $k\ge1$ such that $U_0\Ker(\psi^k)=U$. So $U/\Ker(\psi^k)\cong 
U$ is a quotient group of $U_0$ and hence connected. 
\end{proof}

We next consider fusion systems over discrete $p$-toral groups. 

\begin{Defi}[{\cite[Definitions 2.1--2.2]{BLO3}}] \label{d:sat.f.s.}
Fix a discrete $p$-toral group $S$. 
\begin{enuma}

\item A \emph{fusion system} $\calf$ over $S$ is a 
category whose objects are the subgroups of $S$, and whose morphism sets 
$\homf(P,Q)$ are such that \medskip
\begin{itemize}
\item $\Hom_S(P,Q)\subseteq\homf(P,Q)\subseteq\Inj(P,Q)$ for all $P,Q\le 
S$; and 

\item every morphism in $\calf$ factors as an isomorphism in $\calf$
followed by an inclusion.
\end{itemize} \medskip
Two subgroups $P,P'\le{}S$ are \emph{$\calf$-conjugate} if 
$\isof(P,P')\ne\emptyset$, and two elements $x,y\in S$ are 
$\calf$-conjugate if there is $\varphi\in\homf(\gen{x},\gen{y})$ such that 
$\varphi(x)=y$. The $\calf$-conjugacy classes of $P\le S$ and $x\in S$ are 
denoted $P^\calf$ and $x^\calf$, respectively.

\item A subgroup $P\le S$ is \emph{fully automized} in $\calf$ if the 
index of $\Aut_S(P)$ in $\autf(P)$ is finite and prime to $p$.

\item A subgroup $P\le S$ is \emph{receptive} in $\calf$ if the 
following holds: for each $Q\in P^\calf$ and each 
$\varphi\in\Iso_\calf(Q,P)$, if we set 
	\[ N_\varphi = N_\varphi^\calf = 
	\{ g\in{}N_S(Q) \,|\, \varphi c_g\varphi^{-1} \in \Aut_S(P) \}, \]
then $\varphi$ extends to a homomorphism $\4\varphi\in\homf(N_\varphi,S)$. 

\item $\calf$ is a \emph{saturated fusion system} if the following
two conditions hold: 
\medskip

\begin{itemize} 

\item For each $P\le S$, there is $R\in P^\calf$ such that $R$ is fully 
automized and receptive in $\calf$.

\item (Continuity axiom) If $P_1\le{}P_2\le{}P_3\le\cdots$ is an increasing 
sequence of subgroups of $S$, with $P_\infty=\bigcup_{n=1}^\infty{}P_n$, 
and if $\varphi\in\Hom(P_\infty,S)$ is any homomorphism such that 
$\varphi|_{P_n}\in\homf(P_n,S)$ for all $n$, then 
$\varphi\in\homf(P_\infty,S)$.

\end{itemize}
\end{enuma}
\end{Defi}

This definition of saturation is different from that given in 
\cite{BLO3}, but is equivalent to it by \cite[Corollary 1.8]{BLO6}. For 
finite $S$, it is the definition used in \cite[\S\,I.2]{AKO}. 

Note that $\outf(P)$ ($=\autf(P)/\Inn(P)$) is finite for each saturated 
fusion system $\calf$ over $S$ and each $P\le S$. If $P$ is fully 
automized, then this follows from the definition and since \new{$\Out_S(P)$} 
is finite (\cite[Proposition 1.5(c)]{BLO3}). Otherwise, there is some 
$R\in P^\calf$ that is fully automized, and $\outf(P)\cong\outf(R)$ is 
finite.

The following additional definitions will be needed.

\begin{Defi} \label{d:Z+foc}
Let $\calf$ be a fusion system over a discrete $p$-toral group $S$. For 
$P\le S$, 
\begin{itemize} 

\item $P$ is \emph{$\calf$-centric} if $C_S(Q)\le Q$ for each $Q\in P^\calf$;

\item $P$ is \emph{$\calf$-radical} if $O_p(\outf(P))=1$; 

\item $P$ is \emph{central} in $\calf$ if each 
$\varphi\in\homf(Q,R)$, for $Q,R\le S$, extends to some 
$\4\varphi\in\homf(QP,RP)$ such that $\4\varphi|_P=\Id_P$; and 

\item $P$ is \emph{strongly closed in $\calf$} if for each $x\in P$, 
$x^\calf\subseteq P$.
\end{itemize}
In addition, 
\begin{itemize} 
\item $Z(\calf)\nsg S$ (the \emph{center} of $\calf$) is the 
subgroup generated by all subgroups $Z\le S$ central in $\calf$; and 

\item $\foc(\calf)=\gen{xy^{-1}\,|\,x,y\in S, ~ y\in x^\calf} \nsg S$ 
(the \emph{focal subgroup} of $\calf$). 

\end{itemize}
\end{Defi}

It follows immediately from the definitions that $Z(\calf)\le Z(S)$ and is 
itself central in $\calf$, and that $\foc(\calf)\ge[S,S]$. 

\begin{Lem} \label{l:Z+foc}
Let $\calf$ be a fusion system over a discrete $p$-toral group $S$. 
Then 
\begin{enuma} 

\item $Z(\calf)\subseteq\bigl\{x\in Z(S)\,\big|\, x^\calf=\{x\}\bigr\}$, 
with equality if $\calf$ is saturated; and 


\item if $P\le S$ is such that $P\le Z(\calf)$ or $P\ge\foc(\calf)$, 
then $P$ is strongly closed in $\calf$.

\end{enuma}
\end{Lem}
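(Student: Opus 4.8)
For part (a), the plan is to prove two inclusions. The inclusion $Z(\calf)\subseteq\{x\in Z(S)\mid x^\calf=\{x\}\}$ should follow directly from the definitions: if $Z\le S$ is central in $\calf$, then in particular every $\varphi\in\homf(\gen{x},\gen{y})$ for $x\in Z$ extends to a morphism fixing $Z$ pointwise, which forces $\varphi(x)=x$, so $x^\calf=\{x\}$; and $Z(\calf)\le Z(S)$ was already noted after Definition \ref{d:Z+foc}. Taking the subgroup generated by all such $Z$ stays inside the set $\{x\in Z(S)\mid x^\calf=\{x\}\}$ because that set is itself a subgroup of $Z(S)$ (it is the intersection of $Z(S)$ with the set of elements fixed by all $\calf$-automorphisms of cyclic subgroups, and is visibly closed under multiplication since the relevant morphisms are homomorphisms). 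For the reverse inclusion when $\calf$ is saturated, set $Z=\{x\in Z(S)\mid x^\calf=\{x\}\}$; I must show $Z$ is central in $\calf$, i.e., that every $\varphi\in\homf(Q,R)$ extends to $\4\varphi\in\homf(QZ,RZ)$ with $\4\varphi|_Z=\Id$. The standard argument is to use saturation (specifically receptivity, together with the fact that $Z\le Z(S)$ so $QZ\le N_S(Q)$ and likewise on the target side) to extend morphisms; one first extends $\varphi$ to $QC_S(Q)\supseteq QZ$ using that $C_S(Q)\le N_\varphi$ after replacing the target by a fully automized receptive representative, then checks that the extension restricts to the identity on $Z$ because each element of $Z$ is $\calf$-fixed. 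This is a known fact for finite $p$-groups (e.g. the characterization of $Z(\calf)$ in \cite{AKO}), and the continuity axiom handles the passage to infinite $S$. The main obstacle here is being careful with the saturation bookkeeping in the discrete $p$-toral setting—choosing fully automized receptive representatives and invoking the continuity axiom correctly when the subgroups involved are not finite.

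For part (b), the plan is to treat the two cases separately and reduce the second to the first. If $P\le Z(\calf)$, then by part (a) every $x\in P$ satisfies $x^\calf=\{x\}\subseteq P$, so $P$ is strongly closed essentially by definition. If $P\ge\foc(\calf)$, take $x\in P$ and any $\varphi\in\homf(\gen{x},\gen{y})$ with $\varphi(x)=y$; then $xy^{-1}=x(\varphi(x))^{-1}\in\foc(\calf)\le P$ by the definition of the focal subgroup, so $y=(xy^{-1})^{-1}x\in P$, giving $x^\calf\subseteq P$. This case is genuinely elementary and requires no saturation hypothesis.

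So overall the only real work is the equality in part (a) for saturated $\calf$; everything else is a direct unwinding of definitions. I would organize the write-up as: (a) the easy inclusion and the subgroup observation, then (a) the hard inclusion via receptivity and continuity, then (b) the two short cases.
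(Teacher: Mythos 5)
Your overall route coincides with the paper's: the hard direction of (a) is handled exactly as the paper does it (the paper simply cites \cite[Lemma I.4.2]{AKO} and observes that the same receptivity/extension argument goes through for discrete $p$-toral $S$, which is what your sketch reproduces), and your two cases in (b) are the paper's argument verbatim, up to writing $y=(xy^{-1})^{-1}x$ instead of $y=x(x^{-1}y)$.

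There is, however, one step in your treatment of the easy inclusion of (a) that does not hold as stated: the claim that $\bigl\{x\in Z(S)\,\big|\,x^\calf=\{x\}\bigr\}$ is ``visibly closed under multiplication'' in an arbitrary (not necessarily saturated) fusion system. A morphism $\varphi\in\homf(\gen{xy},S)$ need not restrict to $\gen{x}$ or $\gen{y}$, so the hypotheses $x^\calf=\{x\}$ and $y^\calf=\{y\}$ impose no constraint on $\varphi(xy)$; one can build non-saturated examples over $\Z/p\times\Z/p$ (adjoin a single isomorphism $\gen{ab}\cong\gen{a}$ and its inverse) in which two fixed elements have a non-fixed product. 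So you cannot pass from ``each central $Z$ lies in the set'' to ``$Z(\calf)$ lies in the set'' via closure of the set. The correct (and easy) repair is the one the paper uses implicitly: a product of two central subgroups is again central (extend a given morphism first over the one subgroup, then over the other, the composite extension being the identity on both), so $Z(\calf)$ is itself central in $\calf$ --- this is exactly the remark made right after Definition \ref{d:Z+foc} --- and then for $x\in Z(\calf)$ any $\varphi\in\homf(\gen{x},\gen{y})$ extends to a morphism that is the identity on $Z(\calf)$, giving $\varphi(x)=x$ directly. With that substitution your proposal is complete and agrees with the paper's proof.
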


\begin{proof} \textbf{(a) } The inclusion is immediate from the definition 
of a central subgroup. The opposite implication (when $\calf$ is saturated) 
is shown in \cite[Lemma I.4.2]{AKO} when $S$ is a finite $p$-group, and the 
same argument applies in the discrete $p$-toral case.

\smallskip

\noindent\textbf{(b) } If $P\le Z(\calf)$, then by (a), $x^\calf=\{x\}$ 
for each $x\in P$, and hence $P$ is strongly closed. If 
$P\ge\foc(\calf)$, then for each $x\in P$ and each $y\in x^\calf$, 
$y=x(x^{-1}y)\in P$ since $x^{-1}y\in\foc(\calf)$, so $P$ is strongly 
closed also in this case.
\end{proof}

We next look at fusion subsystems.

\begin{DefNot} \label{d:f.subsyst.}
Let $\calf$ be a fusion system over a discrete $p$-toral group $S$.
\begin{itemize} 

\item A \emph{fusion subsystem} of $\calf$ is a 
subcategory $\cale$ of $\calf$ whose objects are the subgroups of some 
$T\le S$, and such that $\cale$ is itself a fusion system over $T$. 

\item For $T\le S$, $\calf|_{\le T}$ denotes the full subcategory of 
$\calf$ whose objects are the subgroups of $T$, regarded as a fusion 
subsystem of $\calf$ over $T$.

\end{itemize}
\end{DefNot}

The fusion subsystem $\calf|_{\le T}$ is not, in general, saturated, not 
even when $\calf$ is saturated. But in certain specialized cases this is 
the case.

\begin{Lem} \label{l:S=TCS(T)}
Let $\calf$ be a saturated fusion system over a discrete $p$-toral 
group $S$. Assume $T\le S$ is strongly closed in $\calf$, and is such that 
$S=TC_S(T)$. Then $\calf|_{\le T}$ is a saturated fusion subsystem of 
$\calf$.
\end{Lem}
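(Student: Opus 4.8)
The plan is to verify directly that $\calf|_{\le T}$, which by Definition~\ref{d:f.subsyst.} is already a fusion subsystem of $\calf$ over $T$, satisfies the two conditions of Definition~\ref{d:sat.f.s.}(d). Everything will be deduced from the corresponding properties of $\calf$ together with two elementary observations, which I would record first. Since $T$ is strongly closed in $\calf$, every $\calf$-conjugate of a subgroup of $T$ again lies in $T$; hence $P^{\calf|_{\le T}}=P^\calf$ for $P\le T$, and, $\calf|_{\le T}$ being a full subcategory of $\calf$, also $\Iso_{\calf|_{\le T}}(P,Q)=\isof(P,Q)$ and $\Aut_{\calf|_{\le T}}(P)=\autf(P)$ for all $P,Q\le T$. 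Since $S=TC_S(T)$, we have $\Aut_S(P)=\Aut_T(P)$ for every $P\le T$: writing $g\in N_S(P)$ as $g=tc$ with $t\in T$ and $c\in C_S(T)$, conjugation by $g$ and by $t$ agree on $P\le T$, so $t\in N_T(P)$ and $c_g|_P=c_t|_P\in\Aut_T(P)$.

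The continuity axiom for $\calf|_{\le T}$ is inherited at once from that of $\calf$. If $P_1\le P_2\le\cdots$ are subgroups of $T$ with union $P_\infty$ and $\varphi\in\Hom(P_\infty,T)$ satisfies $\varphi|_{P_n}\in\Hom_{\calf|_{\le T}}(P_n,T)=\homf(P_n,T)$ for all $n$, then, regarding $\varphi$ as a homomorphism $P_\infty\to S$, the continuity axiom in $\calf$ gives $\varphi\in\homf(P_\infty,S)$; as $\varphi$ has image in $T$, this says $\varphi\in\Hom_{\calf|_{\le T}}(P_\infty,T)$.

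The first saturation axiom is the main point. Fix $P\le T$, and use saturation of $\calf$ to choose $R\in P^\calf$ that is fully automized and receptive in $\calf$; by strong closure $R\le T$, and $R\in P^{\calf|_{\le T}}$. That $R$ is fully automized in $\calf|_{\le T}$ is immediate from the observations above, since the index of $\Aut_T(R)=\Aut_S(R)$ in $\Aut_{\calf|_{\le T}}(R)=\autf(R)$ is finite and prime to $p$. For receptivity in $\calf|_{\le T}$, take $Q\in R^{\calf|_{\le T}}=R^\calf$ (so $Q\le T$) and $\varphi\in\Iso_{\calf|_{\le T}}(Q,R)=\isof(Q,R)$. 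Since $N_T(Q)\le N_S(Q)$ and $\Aut_T(R)\le\Aut_S(R)$, the subgroup $N_\varphi^{\calf|_{\le T}}$ is contained in $N_\varphi^\calf$, so the extension $\4\varphi\in\homf(N_\varphi^\calf,S)$ furnished by receptivity of $R$ in $\calf$ restricts to an $\calf$-morphism on $N_\varphi^{\calf|_{\le T}}$ that again extends $\varphi$; its image lies in $T$ by strong closure, so this restriction lies in $\Hom_{\calf|_{\le T}}(N_\varphi^{\calf|_{\le T}},T)$, as required.

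I do not expect a real obstacle here: once the two bookkeeping identities $P^{\calf|_{\le T}}=P^\calf$ (from strong closure) and $\Aut_S(P)=\Aut_T(P)$ for $P\le T$ (from $S=TC_S(T)$) are in hand, the saturation data for $\calf$ simply restricts to saturation data for $\calf|_{\le T}$. The one point worth care is that \emph{fully automized} and \emph{receptive} in $\calf|_{\le T}$ are defined in terms of $\Aut_T(-)$, $N_T(-)$ and morphisms with image in $T$, so the hypothesis $S=TC_S(T)$ is genuinely needed to match these up with their analogues in $\calf$ — without it $\Aut_T(R)$ could have $p$-power index in $\autf(R)$, and $\calf|_{\le T}$ would fail to be saturated, consistent with the comment preceding the lemma that $\calf|_{\le T}$ need not be saturated in general.
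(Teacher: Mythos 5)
Your proof is correct and follows essentially the same route as the paper: pick $R\in P^\calf$ fully automized and receptive in $\calf$, note $R\le T$ by strong closure, use $S=TC_S(T)$ to get $\Aut_T(R)=\Aut_S(R)$ (the paper phrases this as $N_S(R)=N_T(R)C_S(T)$) and fullness of $\calf|_{\le T}$ to transfer full automization, then restrict the receptive extension $\4\varphi$ to $N_\varphi^{\calf|_{\le T}}\le N_\varphi^\calf$, landing in $T$ by strong closure, with continuity inherited directly. No gaps worth noting.
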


\begin{proof} Set $\cale=\calf|_{\le T}$ for short: by definition, a 
fusion system over $T$. Fix $P\le T$, and choose $R\in P^\calf$ which 
is fully automized and receptive in $\calf$. Then $R\le T$ since $T$ is 
strongly closed, and $R\in P^\cale$ since $\cale$ is a full 
subcategory. Also, $\Aut_\cale(R)=\autf(R)$ (again since $\cale$ is a 
full subcategory), and $\Aut_T(R)=\Aut_S(R)$ since 
$N_S(R)=N_T(R)C_S(T)$. So $R$ is fully automized in $\cale$. 

If $\varphi\in\Iso_\cale(Q,R)$, then since $R$ is receptive in $\calf$, 
$\varphi$ extends to some $\4\varphi\in\homf(N_\varphi^\calf,S)$, where 
$N_\varphi^\calf\le N_S(Q)$ is as defined in Definition 
\reff{d:sat.f.s.}(c). Then $N_\varphi^\cale=N_\varphi^\calf\cap T$, and 
$\4\varphi(N_\varphi^\cale)\le T$ since $T$ is strongly closed. So 
$\4\varphi$ restricts to $\til\varphi\in\Hom_\calf(N_\varphi^\cale,T) 
=\Hom_\cale(N_\varphi^\cale,T)$, and 
since $\varphi$ was arbitrary, $R$ is receptive in $\cale$.

Thus each subgroup of $T$ is $\cale$-conjugate to one that is fully 
automized and receptive in $\cale$. The continuity axiom for 
$\cale$ follows immediately from that for $\calf$, and so $\cale$ is saturated. 
\end{proof}

We end the section with another technical lemma, one that will be needed in 
Section 4.

\begin{Lem} \label{l:Fcr/S1xS2}
Assume $S=S_1\times S_2$, where $S_1$ and $S_2$ are discrete $p$-toral groups, and 
let $\calf$ be a saturated fusion system over $S$. Then for each $P\le S$ 
that is $\calf$-centric and $\calf$-radical, there are subgroups $P_i\le 
S_i$ ($i=1,2$) such that $P=P_1\times P_2$.
\end{Lem}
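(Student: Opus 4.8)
The goal is to show that an $\calf$-centric $\calf$-radical subgroup $P\le S=S_1\times S_2$ splits as a product $P_1\times P_2$. The natural approach is to compare $P$ with the product $P_1\times P_2$, where $P_i=\pr_i(P)$ is the image of $P$ under the projection $\pr_i\:S\to S_i$. Clearly $P\le P_1\times P_2$, and $P_1\times P_2$ is the smallest product subgroup containing $P$; the content of the lemma is that this inclusion is an equality. I will exploit $\calf$-radicality via a suitable $p$-subgroup of $\outf(P)$ constructed from the product structure, and use $\calf$-centricity to control the relevant centralizers.

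First I would set $Q=P_1\times P_2$ and observe that $Q$ normalizes $P$ (since $[Q,Q]\le[P_1,P_1]\times[P_2,P_2]$... more carefully, one checks $[Q,P]\le P$ using that the commutator of an element of $P_1\times P_2$ with an element $(x_1,x_2)\in P$ lands in $\gen{[P_1,x_1]}\times\gen{[P_2,x_2]}\le P$ — here one wants $[P_i,P_i]\le\pr_i(P)\cap\cdots$; the cleanest statement is that $Q\le N_S(P)$ because $P\nsg Q$, which follows since $Q/P$ is abelian... actually the simplest route: $P$ contains $[P,P]=[P_1,P_1]\times[P_2,P_2]\ge[Q,Q]$, so $P\nsg Q$). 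Thus $\Aut_Q(P)\le\autf(P)$ is a $p$-subgroup, and it contains $\Inn(P)$, so it determines a $p$-subgroup $\Aut_Q(P)/\Inn(P)\le\outf(P)$. Since $P$ is $\calf$-radical, $O_p(\outf(P))=1$, and it suffices to show $\Aut_Q(P)/\Inn(P)$ is normal in $\outf(P)$ to conclude $\Aut_Q(P)=\Inn(P)$, i.e. $Q=P\cdot C_S(P)$. Then $\calf$-centricity gives $C_S(P)\le P$ (after replacing $P$ by an $\calf$-conjugate if needed — but $\calf$-centric means $C_S(R)\le R$ for \emph{every} $R\in P^\calf$, so in particular for $P$ itself), hence $C_S(P)=Z(P)\le P$ and $Q=P$, giving $P=P_1\times P_2$.

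The key remaining point — and the main obstacle — is to show $\Aut_Q(P)$ is normal in $\autf(P)$, equivalently that $Q$ is "weakly closed"-like relative to $\autf(P)$: every $\psi\in\autf(P)$ satisfies $\psi(\,\cdot\,)$ compatible with conjugation by $Q$. This should follow from the fact that $Q=\pr_1(P)\times\pr_2(P)$ is \emph{canonically} built from $P$ using the product decomposition of $S$, combined with the observation that $S_1,S_2$ (being the two connected-times-finite factors, in fact being characteristic-type subgroups of $S$, e.g. the two "blocks") are permuted by any $\psi\in\autf(P)$ that extends to $S$ — but morphisms in $\calf$ need not extend to $S$. So instead I would argue directly: for $\psi\in\autf(P)$ and $g\in Q$, I want $\psi\circ c_g\circ\psi^{-1}\in\Aut_Q(P)$. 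Using receptivity applied to a fully automized receptive representative $R$ in $P^\calf$, together with Lemma~\ref{l:S=TCS(T)} or a direct argument, one should be able to extend the relevant automorphisms over $N_S(P)$ and then use that $N_S(P)\le N_S(P_1\times P_2)$ and that conjugation in $S$ respects the product decomposition. Concretely, the cleanest implementation is probably: show $P_1\times P_2\nsg N_S(P)$ (immediate, since $N_S(P)$ permutes $P_i=\pr_i(P)$ — again using that $\pr_i$ is defined from the product structure of $S$, which $N_S(P)$ respects), then show via receptivity that $\Aut_Q(P)$ is normal in $\autf(P)$ by lifting automorphisms to $N_S(P)$ where the normality is visible.

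I expect the subtlety to be entirely in verifying that $Q=\pr_1(P)\times\pr_2(P)$ is normalized by enough of $\autf(P)$ — this is where one must use that $P$ being centric radical forces $P$ to be "large enough" that its product closure $Q$ is canonical. A slick alternative worth trying: let $\alpha\in\autf(P)$; since $\alpha$ permutes $Z(P)$ and $Q=C_S(Z(P)\cap\text{something})$... this is less clean. I'd go with the $O_p$-argument above, filling in the normality of $\Aut_Q(P)$ in $\autf(P)$ via receptivity of a fully automized representative — this is the standard technique (cf. the proof that $N_S^\calf(P)\le P$ for $P$ centric radical, or that $\Aut_S(P)\in\syl{p}{\autf(P)}$), and the product structure only enters to identify the Sylow-like $p$-subgroup $\Aut_Q(P)$ concretely.
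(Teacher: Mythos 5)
Your overall strategy (compare $P$ with $\5P=\pr_1(P)\times\pr_2(P)$, use radicality to kill a suitable $p$-subgroup of $\outf(P)$, then use centricity and normalizer growth to force $P=\5P$) is the right skeleton, and it is the skeleton of the paper's proof. But the step you yourself flag as ``the key remaining point'' is a genuine gap, not a routine verification: you need the image of $\Aut_{\5P}(P)$ in $\outf(P)$ to lie in $O_p(\outf(P))$, and your only route to this is the claim that $\Aut_{\5P}(P)$ is normal in $\autf(P)$. Nothing you sketch proves this. Receptivity only extends a morphism $\varphi$ over $N_\varphi$, not over $N_S(P)$, and a general $\psi\in\autf(P)$ has no reason to be compatible with the product decomposition $S=S_1\times S_2$ or to normalize $\Aut_{\5P}(P)$; likewise $\Aut_S(P)$ need not be Sylow in $\autf(P)$ since $P$ is not assumed fully automized. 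In addition, two subsidiary claims are false: $[P,P]$ is \emph{not} $[P_1,P_1]\times[P_2,P_2]$, and $P$ need not be normal in $\5P$ (take $P$ the diagonal in $G\times G$ for a nonabelian $p$-group $G$), so the identity ``$\Aut_Q(P)=\Inn(P)$ gives $Q=PC_S(P)$'' also needs repair; one only has $P<N_{\5P}(P)$ when $P<\5P$, which is all the paper uses.

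The paper closes the gap by replacing $\Aut_{\5P}(P)$ with a subgroup of $\autf(P)$ that is normal \emph{for free}: it uses the characteristic filtration $\til{Z}_n(P)$ of Lemma \reff{l:Ln(S)}, proves $\til{Z}_n(P)=P\cap\til{Z}_n(\5P)$, and sets $B=\{\alpha\in\autf(P)\mid[\alpha,\til{Z}_i(P)]\le\til{Z}_{i-1}(P)\ \forall i\}$. Then $B\nsg\autf(P)$ because the filtration is characteristic, $B\ge\Inn(P)$, and $B/\Inn(P)$ is a finite $p$-group by Lemma \reff{l:Ln(S)}(b) (prime-to-$p$ automorphisms acting trivially on the filtration quotients are trivial --- this is exactly where the discrete $p$-toral technicalities are absorbed), so radicality gives $B=\Inn(P)$. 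Finally, any $x\in N_{\5P}(P)\sminus P$ has $c_x\in B$, since $\5P$ acts trivially on the quotients $\til{Z}_n(\5P)/\til{Z}_{n-1}(\5P)$ and the filtration of $P$ is induced from that of $\5P$; then $x\in PC_S(P)=P$ by centricity, a contradiction. So your proposal would need this (or an equivalent) replacement of ``$\Aut_{\5P}(P)$ is normal in $\autf(P)$'' to become a proof; as written, the central step is unproven and, in that form, likely unprovable.
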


\begin{proof} When $S$ is a finite $p$-group, this is shown in \cite[Lemma 
3.1]{AOV1}. We adapt that proof to fit this more general situation, while 
dealing with the extra complications that arise when the groups are 
infinite.

Let $\pr_i\:S\too S_i$ be the projection ($i=1,2$). For each $P\le S$, we 
write $P_i=\pr_i(P)$, and set $\5P=P_1\times P_2\ge P$. Let $\til{Z}_n(-)$ 
be as in Lemma \reff{l:Ln(S)}. We first claim that for each $n\ge1$, \beqq 
\til{Z}_n(P)=P\cap \til{Z}_n(\5P). \label{e:Ln(P)} \eeqq For $n=1$, this 
holds since $Z(P)=P\cap Z(\5P)$. If \eqreff{e:Ln(P)} holds for $n\ge1$, 
then we can identify $P/\til{Z}_n(P)$ as a subgroup of $\5P/\til{Z}_n(\5P)$ 
(which projects surjectively to each factor $P_i/\til{Z}_n(P_i)$), and 
\eqreff{e:Ln(P)} follows for $n+1$ since 
$\Omega_1(Z(P/\til{Z}_n(P)))=\Omega_1(Z(\5P/\til{Z}_n(\5P)))$.

Set $B=\{\alpha\in\autf(P)\,|\,[\alpha,\til{Z}_i(P)]\le 
\til{Z}_{i-1}(P)~\forall\,i\}$. Then $\Inn(P)\le B$, and $B\nsg\autf(P)$ 
since the $\til{Z}_i(P)$ are all characteristic. By Lemma \reff{l:Ln(S)}, 
the only element of order prime to $p$ in $B$ is the identity. If 
$\alpha\in B$ is such that its class $[\alpha]\in B/\Inn(P)$ has order $n$ 
prime to $p$, then $\alpha^n\in\Inn(P)$ has $p$-power order, so $\alpha$ 
has order $np^k$ for some $k\ge0$, and there is $\beta\in\gen\alpha$ of 
order $n$ such that $[\beta]=[\alpha]$. So the finite group 
$B/\Inn(P)\nsg\outf(P)$ has $p$-power order, and hence $B/\Inn(P)\le 
O_p(\outf(P))$. Thus $B=\Inn(P)$, since $P$ is $\calf$-radical. 

Assume that $P<\5P=P_1\times P_2$. Then $P<N_{\5P}(P)$ (see \cite[Lemma 
1.8]{BLO3}). Choose $x\in N_{\5P}(P)\sminus P$, and let $c_x\in\Aut_S(P)$ 
be conjugation by $x$. For each $n\ge1$, $c_x$ induces the identity on 
$\til{Z}_n(P)/\til{Z}_{n-1}(P)$ by \eqreff{e:Ln(P)} and since it induces 
the identity on $\til{Z}_n(\5P)/\til{Z}_{n-1}(\5P)$. Thus $c_x\in 
B=\Inn(P)$, and $x\in PC_S(P)=P$ since $P$ is $\calf$-centric, 
contradicting our assumption. We conclude that $P=\5P$ is a product, as 
claimed in the lemma.
\end{proof}

\nnewpage

\section{Morphisms of fusion systems and commuting fusion subsystems}

We are now ready to define morphisms of fusion systems.

\begin{Defi} \label{d:Mor(E,F)}
Let $\cale$ and $\calf$ be fusion systems over discrete $p$-toral groups 
$T$ and $S$, respectively. 
\begin{enuma}

\item A \emph{morphism} from $\cale$ to $\calf$ is a \new{pair $(f,\5f)$, 
where $f\in\Hom(T,S)$ and $\5f\:\cale\too\calf$ is a functor satisfying} 
\medskip
\begin{itemize} 
\item for each $P\le T$, $\5f(P)=f(P)\le S$; and 

\item for each $P,Q\le T$, each $\varphi\in\Hom_\cale(P,Q)$, and each $x\in 
P$, we have 
	\[ \5f(\varphi)(f(x))=f(\varphi(x))\in f(Q). \]


\end{itemize} 
We let $\Mor(\cale,\calf)\subseteq \Hom(T,S)$ 
denote the set of morphisms from $\cale$ to $\calf$, set 
$\End(\calf)=\Mor(\calf,\calf)$, and let $\Aut(\calf)\le\End(\calf)$ be the 
group of invertible endomorphisms. 

\item For each morphism \new{$(f,\5f)\in\Mor(\cale,\calf)$, define} \medskip
\begin{itemize} 

\item $\new{\Ker(f,\5f)} = \Ker\bigl(f\:T\too S\bigr) \nsg T$ (i.e., the kernel of 
$f$ as a group homomorphism); and 

\item $\new{\Im(f,\5f)} =\gen{\5f(\cale)}\le \calf$ (the smallest fusion subsystem of 
$\calf$ containing $\5f(\cale)$). 

\end{itemize} \medskip
By comparison, we write $f(T)\le S$ to denote the image of $f$ as a group 
homomorphism.
We say that \new{$(f,\5f)$} is \emph{surjective} (or onto) if 
$\Im(f,\5f)=\calf$; i.e., if each morphism in $\calf$ is a composite of 
morphisms in $\5f(\cale)$. 
\end{enuma}
\end{Defi}

\new{When $(f,\5f)\in\Mor(\cale,\calf)$, the conditions in Definition 
\ref{d:Mor(E,F)}(a) relating $f$ and $\5f$ make it clear that $\5f$ is 
uniquely determined by $f$. For this reason, when there is no risk of 
confusion, we usually just write $f\in\Mor(\cale,\calf)$ to represent the 
pair $(f,\5f)$.}

If $\cale$ is a saturated fusion system over a \emph{finite} $p$-group 
$T$ and $\new{(f,\5f)}\in\Mor(\cale,\calf)$, then by a theorem of Puig, 
$\new{\Im(f,\5f)}=\5f(\cale)$ and is a saturated fusion system. See, e.g., 
Corollary 5.15 and Proposition 5.11 in \cite{Craven} for details. But 
we do not know whether this is always the case when $T$ is an infinite 
discrete $p$-toral group, nor even whether $\5f(\cale)$ is always a 
subcategory. If one allows $\cale$ not to be saturated, then one can 
easily construct morphisms $\new{(f,\5f)}\in\Mor(\cale,\calf)$ where $\5f(\cale)$ 
is not a subcategory of $\calf$. 

However, it turns out that none of this is relevant when proving 
Theorem \reff{ThA}, which is why many of the statements in this section 
and the next involve fusion subsystems that need not be saturated, or 
morphisms whose domain is not assumed saturated. Later, in Proposition 
\reff{p:normal.end.2}(d), we will show that in the important case where 
$f$ is a \emph{normal} endomorphism of a saturated fusion system 
$\calf$, the image $\Im(f)$ is always saturated (and 
$\Im(f)=\5f(\calf)$).

\begin{Lem} \label{l:inj+surj=iso}
Let $\cale$ and $\calf$ be fusion systems over 
discrete $p$-toral groups $T$ and $S$.
\begin{enuma} 
\item For each $f\in\Mor(\cale,\calf)$, $\Ker(f)$ is strongly closed in 
$\cale$ (Definition \reff{d:Z+foc}).

\item If $f\in\Mor(\cale,\calf)$ is such that $\Ker(f)=1$ and 
$\Im(f)=\calf$, then $f$ is an isomorphism of fusion systems. 
\end{enuma}
\end{Lem}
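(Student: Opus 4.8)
The plan is to prove the two parts separately, with part (a) being essentially a direct translation of the group-theoretic fact that kernels are normal, and part (b) requiring a check that an inverse functor exists and is a morphism of fusion systems.

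\textbf{Part (a).} I would argue that $\Ker(f)\nsg T$ is strongly closed in $\cale$ by showing that it is closed under $\cale$-conjugacy of elements. Let $x\in\Ker(f)$, and suppose $y\in x^{\cale}$, so there is $\varphi\in\Hom_{\cale}(\gen x,\gen y)$ with $\varphi(x)=y$. Applying the defining compatibility relation in Definition~\ref{d:Mor(E,F)}(a), namely $\5f(\varphi)(f(x))=f(\varphi(x))=f(y)$, and using $f(x)=1$ together with the fact that $\5f(\varphi)$ is a homomorphism (so sends $1$ to $1$), we get $f(y)=1$, i.e.\ $y\in\Ker(f)$. Hence $x^{\cale}\subseteq\Ker(f)$ for every $x\in\Ker(f)$, which is exactly the definition of strongly closed in Definition~\ref{d:Z+foc}. (That $\Ker(f)$ is a normal subgroup of $T$ is automatic since it is the kernel of the group homomorphism $f$.)

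\textbf{Part (b).} Assume $\Ker(f)=1$ and $\Im(f)=\calf$. Since $\Ker(f)=1$, the group homomorphism $f\:T\too S$ is injective; I must first show it is surjective, so that $f$ is a group isomorphism. Because $\Im(f,\5f)=\gen{\5f(\cale)}=\calf$, every morphism of $\calf$ — in particular every inclusion-induced conjugation $c_s$ for $s\in S$ — is a composite of morphisms in $\5f(\cale)$; each of those has image inside $f(T)$ (by the first bullet of Definition~\ref{d:Mor(E,F)}(a), $\5f$ sends objects to subgroups of $f(T)$), and since $\Hom_S(P,Q)\subseteq\Hom_{\calf}(P,Q)$ for all $P,Q$, applying this to the inclusion $S\le S$ forces $f(T)=S$. (Alternatively one notes $S\in\Ob(\calf)$ must be $f(P)$ for some $P\le T$ in order for morphisms out of it to decompose through $\5f(\cale)$.) So $f$ is an isomorphism of groups with inverse $g=f^{-1}\:S\too T$. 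It remains to build the inverse functor $\5g\:\calf\too\cale$: on objects set $\5g(Q)=g(Q)$, and I claim that for $\psi\in\Hom_{\calf}(Q,R)$ the homomorphism $g\psi f|_{g(Q)}\:g(Q)\too g(R)$ lies in $\cale$. Since $\Im(f)=\calf$, write $\psi$ as a composite of morphisms of the form $\5f(\varphi_j)$ (and inclusions, which cause no trouble); for a single factor $\5f(\varphi)$ with $\varphi\in\Hom_{\cale}(P',Q')$, the compatibility relation $\5f(\varphi)(f(x))=f(\varphi(x))$ says precisely that $g\circ\5f(\varphi)\circ f=\varphi$ on $P'$, so conjugating $\psi$ back through $g,f$ recovers the corresponding composite of the $\varphi_j$ in $\cale$. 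Thus $\5g(\psi)\defeq g\psi f\in\Hom_{\cale}(g(Q),g(R))$ is well defined, $\5g$ is a functor, and $(g,\5g)\in\Mor(\calf,\cale)$ is a two-sided inverse to $(f,\5f)$ by construction; hence $f$ is an isomorphism of fusion systems.

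\textbf{Main obstacle.} The delicate point is the surjectivity of $f$ as a group map and, relatedly, the passage "every morphism of $\calf$ decomposes through $\5f(\cale)$, therefore the conjugation homomorphisms $c_s$ do." One has to be careful that $\gen{\5f(\cale)}=\calf$ genuinely forces $f(T)=S$ and not merely that $\calf$ is generated by $\5f(\cale)$ together with restrictions of inner automorphisms of $S$ — but since $\Hom_S(P,Q)$ is \emph{part of} $\5f(\cale)$ only when $P,Q\le f(T)$, the generation hypothesis really does pin down $S=f(T)$. Once $f$ is known to be a group isomorphism, verifying that $\5g=g(-)f$ lands in $\cale$ is the other place where one must invoke $\Im(f)=\calf$ rather than just $\5f(\cale)$ being a subcategory; writing an arbitrary $\calf$-morphism as an honest composite of $\5f$-images (and inclusions) and checking the bookkeeping is routine but is where all the content of the hypothesis $\Im(f)=\calf$ is used.
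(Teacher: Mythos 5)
Your proof is correct. Part (a) is exactly the paper's argument. In part (b) you reach the same conclusion by a somewhat different verification: the paper's route is to note that $\Ker(f)=1$ makes the functor $\5f$ injective on objects and on morphisms, so that $\5f(\cale)\cong\cale$ is already a fusion subsystem of $\calf$ over $f(T)$; consequently $\gen{\5f(\cale)}=\5f(\cale)$, and the hypothesis $\Im(f)=\calf$ then says $\5f(\cale)=\calf$, i.e.\ $\5f$ is bijective as a functor (in particular $f(T)=S$), so $\5f^{-1}$ is a functor and $(f,\5f)^{-1}\in\Mor(\calf,\cale)$. You instead first extract $f(T)=S$ from the generation hypothesis and then check by hand that $\5g(\psi)=g\psi f$ lies in $\Mor(\cale)$ by writing each $\psi\in\Mor(\calf)$ as a composite of (restrictions of) morphisms of $\5f(\cale)$ and transporting that composite back through $g=f^{-1}$, using $g\circ\5f(\varphi)\circ f=\varphi$. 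Both arguments are valid: the paper's observation yields the stronger fact that every morphism of $\calf$ is literally in $\5f(\cale)$ (not merely in the subsystem it generates), which makes the inverse functor immediate and avoids the bookkeeping with composites, restrictions and inclusions that your version needs; your version, in exchange, never has to verify that $\5f(\cale)$ is an honest subcategory of $\calf$.
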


\begin{proof} \textbf{(a) } For each $x\in\Ker(f)$ and $y\in x^\cale$, 
there is $\varphi\in\Mor(\cale)$ such that $y=\varphi(x)$, and hence 
$f(y)=\5f(\varphi)(f(x))=1$. So $x^\calf\subseteq\Ker(f)$, and 
$\Ker(f)$ is strongly closed in $\cale$. 

\smallskip

\noindent\textbf{(b) } If \new{$(f,\5f)\in\Mor(\cale,\calf)$ and} 
$\Ker(f)=1$, then the functor $\5f\:\cale\too\calf$ is injective on objects 
and on morphisms, and $\5f(\cale)\cong\cale$ is a fusion system. If in 
addition, $\Im(f)=\gen{\5f(\cale)}=\calf$, then $\5f(\cale)=\calf$, so 
$\5f$ is bijective, $\5f^{-1}$ is also a functor, and hence 
$\new{(f,\5f)^{-1}}\in\Mor(\calf,\cale)$. 
\end{proof}

We next recall the definition of a direct product of fusion systems. 

\begin{Defi} \label{d:F1xF2}
Let $\calf_1,\dots,\calf_k$ be fusion systems over discrete $p$-toral 
groups $S_1,\dots,S_k$. Set $S=\xxx{S}$, and let $\pr_i\:S\too S_i$ be 
projection to the $i$-th factor. The direct product $\calf=\xxx\calf$ 
is the fusion system over $S$ with morphism sets defined by 
	\[ \Hom_\calf(P,Q) = \bigl\{(\varphi_1,\dots,\varphi_k)|_P 
	\,\big|\, \varphi_i\in\Hom_{\calf_i}(\pr_i(P),\pr_i(Q)),~ 
	(\varphi_1,\dots,\varphi_k)(P)\le Q \bigr\}. \]
\end{Defi}

By construction, a product of fusion systems is again a fusion system. 
In fact, formally, $\calf=\xxx\calf$ is a product in the category 
of fusion systems and morphisms between them. For example, one easily 
checks that if $\calf^*$ is any fusion system over $S=\xxx{S}$ such that 
$\pr_i\in\Mor(\calf^*,\calf_i)$ for all $i$ (in the notation of Definition 
\reff{d:F1xF2}), then $\calf^*\le\calf$. 

If $S_i$ is finite and $\calf_i$ is saturated for all $i$, then the product 
$\calf$ is also saturated: see, e.g., \cite[Theorem I.6.6]{AKO}. That 
proof can easily be extended to show that products of saturated fusion systems 
over discrete $p$-toral groups are saturated, but since this will not be 
needed here, we omit it. Note that the converse follows from Lemma 
\reff{l:S=TCS(T)}: if a product of fusion systems is saturated, then so is 
each factor.

The following definition of commuting fusion subsystems is equivalent to 
that used by Henke \cite[Definition 3.1]{Henke} of ``subsystems that 
centralize each other''. The equivalence of the two definitions, at least 
in the finite case, is essentially the content of \cite[Proposition 
3.3]{Henke}. (See also the remarks after Lemma \reff{l:comm.subsyst.}.)

\begin{Defi} \label{d:comm.subsyst.}
Let $\calf$ be a fusion system over a discrete $p$-toral group 
$S$, and let $\cale_1,\dots,\cale_k\le\calf$ be fusion subsystems. We say 
that \emph{$\cale_1,\dots,\cale_k$ commute} if there is a morphism of 
fusion systems $\new{(I,\5I)}\in\Mor(\xxx\cale,\calf)$ whose restriction to each factor 
$\cale_i$ is the inclusion. In this situation, we set 
$\cale_1\cdots\cale_k=\new{\Im(I,\5I)=\5I(\xxx\cale)}$. 
\end{Defi}

The morphism $\new{(I,\5I)}\in\Mor(\xxx\cale,\calf)$ is uniquely determined 
whenever it exists. So $\cale_1\cdots\cale_k$ is well defined, and is the 
(unique) smallest fusion subsystem of $\calf$ in which the $\cale_i$ 
commute. By comparison, $\gen{\cale_1,\dots,\cale_k}$ is defined to be the 
smallest fusion subsystem of $\calf$ containing all of the $\cale_i$, and 
is in general smaller than $\cale_1\cdots\cale_k$.

This definition of commuting subsystems is, of course, motivated by one 
characterization of commuting subgroups of a group. But the following 
examples show that commuting subsystems can behave quite differently 
from commuting subgroups.

\begin{Ex} \label{ex:comm.or.not}
Set $p=3$. Fix groups $H_i\cong\Sigma_3$ for $i=1,2,3$, set 
$T_i=O_3(H_i)\cong C_3$, and choose $b_i\in H_i\sminus T_i$ (so 
$|b_i|=2$). Set 
	\[ \5G=H_1\times H_2\times H_3, \quad S=T_1\times T_2\times 
	T_3, \quad G=S\gen{b_1b_2,b_1b_3}<\5G, \]
and set $\5\calf=\calf_S(\5G)$ and $\calf=\calf_S(G)$. Also, set 
$\cale_i=\calf_{T_i}(H_i)$ (for $i=1,2,3$), so that 
$\cale_i\le\calf\le\5\calf$ are all saturated fusion subsystems.
\begin{enuma} 

\item The subsystems $\cale_1$, $\cale_2$, and $\cale_3$ commute pairwise 
in $\calf$, but do not commute as a triple.

\item The saturated fusion subsystems $\cale_1\cale_2$ and $\cale_3$ 
commute in $\5\calf$, but do not commute in $\calf<\5\calf$.

\end{enuma}
\end{Ex}

Of particular interest is the situation where $\cale_1$ and $\cale_2$ 
commute in $\calf$ and $\calf=\cale_1\cale_2$. 

\begin{Lem} \label{l:F=E1xE2}
Let $\calf$ be a saturated fusion system over a discrete $p$-toral group 
$S$, and let $\cale_1,\cale_2\le\calf$ be fusion subsystems over 
$T_1,T_2\le S$. Assume that $\cale_1$ and $\cale_2$ commute, and that 
$\cale_1\cale_2=\calf$ (thus $T_1T_2=S$). Then 
\begin{enuma} 
\item $T_1\cap T_2\le Z(\calf)$; and 
\item if $T_1\cap T_2=1$, then $\calf\cong\cale_1\times\cale_2$. 
\end{enuma}
\end{Lem}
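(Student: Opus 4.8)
The plan is to first make the commuting hypothesis concrete, then prove (a) by combining the fact that the kernel of the comparison morphism is strongly closed (Lemma \ref{l:inj+surj=iso}(a)) with the characterization of $Z(\calf)$ in Lemma \ref{l:Z+foc}(a), and finally to get (b) directly from Lemma \ref{l:inj+surj=iso}(b).

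By Definition \ref{d:comm.subsyst.} there is a morphism $(I,\5I)\in\Mor(\cale_1\times\cale_2,\calf)$ restricting to the inclusion on each factor, with $\Im(I,\5I)=\cale_1\cale_2=\calf$. Since $I\colon T_1\times T_2\to S$ is a group homomorphism whose restrictions to $T_1\times1$ and $1\times T_2$ are the inclusions into $S$, we have $I(t_1,t_2)=t_1t_2$; as $(t_1,1)$ and $(1,t_2)$ commute in $T_1\times T_2$, their images commute, so $[T_1,T_2]=1$ in $S$, and $I(T_1\times T_2)=T_1T_2=S$ because $\calf$ is a fusion system over $S$. In particular $\Ker(I)=\{(t,t^{-1})\mid t\in T_1\cap T_2\}$, and each $x\in T_1\cap T_2$ centralizes $T_1$ (since $x\in T_2$) and $T_2$ (since $x\in T_1$), hence centralizes $S=T_1T_2$; so $T_1\cap T_2\le Z(S)$.

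For (a), fix $x\in T_1\cap T_2$. By Lemma \ref{l:inj+surj=iso}(a), $\Ker(I)$ is strongly closed in $\cale_1\times\cale_2$. Applying this to $(x,x^{-1})\in\Ker(I)$ and the morphism $(\varphi_1,\Id_{\gen{x}})\in\Mor(\cale_1\times\cale_2)$, for any $\varphi_1\in\Mor(\cale_1)$ defined on $x$, we get $(\varphi_1(x),x^{-1})\in\Ker(I)$, hence $\varphi_1(x)=x$; symmetrically, every morphism of $\cale_2$ defined on $x$ fixes $x$. Now every morphism of $\calf=\Im(I,\5I)$ is a composite of morphisms $\5I(\varphi)$ with $\varphi=(\varphi_1,\varphi_2)|_P\in\Mor(\cale_1\times\cale_2)$ (Definition \ref{d:Mor(E,F)}(b)). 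If $x$ lies in the domain $I(P)$ of such a morphism, choose $(p_1,p_2)\in P$ with $x=p_1p_2$; then $p_2=p_1^{-1}x\in T_1$ and $p_1=xp_2^{-1}\in T_2$, so $p_1,p_2\in T_1\cap T_2$, whence $\varphi_1(p_1)=p_1$ and $\varphi_2(p_2)=p_2$ and therefore $\5I(\varphi)(x)=I(\varphi_1(p_1),\varphi_2(p_2))=p_1p_2=x$. A composite of morphisms each fixing $x$ fixes $x$, so $x^\calf=\{x\}$; together with $x\in Z(S)$ and the saturation of $\calf$, Lemma \ref{l:Z+foc}(a) gives $x\in Z(\calf)$. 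Hence $T_1\cap T_2\le Z(\calf)$.

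For (b), if $T_1\cap T_2=1$ then $\Ker(I)=1$, while $\Im(I,\5I)=\cale_1\cale_2=\calf$; so $(I,\5I)$ is an isomorphism of fusion systems by Lemma \ref{l:inj+surj=iso}(b), i.e.\ $\calf\cong\cale_1\times\cale_2$. The only slightly delicate point is the last step of (a): one must observe that an arbitrary element of the domain of a morphism $\5I(\varphi)$ factors as a product of two elements lying in $T_1\cap T_2$, so that the consequence of strong closedness can be applied to each component separately. Otherwise the argument is bookkeeping, with Lemmas \ref{l:inj+surj=iso} and \ref{l:Z+foc}(a) carrying the weight.
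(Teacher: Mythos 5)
Your proof is correct and follows essentially the same route as the paper: write $x=x_1x_2$ with $x_1,x_2\in T_1\cap T_2$, show each $\varphi_i$ fixes $x_i$ (your packaging via strong closedness of $\Ker(I)$ is just a reformulation of the paper's direct computation with $\5I(\varphi_1,\Id_{T_2})$ applied to $1=I(x_1,x_1^{-1})$), conclude $x^\calf=\{x\}$ and invoke Lemma \ref{l:Z+foc}(a), and deduce (b) from Lemma \ref{l:inj+surj=iso}(b). No gaps.
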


\begin{proof} Let $\new{(I,\5I)}\in\Mor(\cale_1\times\cale_2,\calf)$ be the 
morphism that extends the inclusions. By assumption, 
$\calf=\cale_1\cale_2=\gen{\5I(\cale_1\times\cale_2)}$.

\smallskip

\noindent\textbf{(a) } Fix $x\in T_1\cap T_2$; we show that 
$x^\calf=\{x\}$. Since $\gen{\5I(\cale_1\times\cale_2)}=\calf$, it 
suffices to show that $\5I(\varphi_1,\varphi_2)(x)=x$ for each 
$\varphi_i\in\Hom_{\cale_i}(P_i,T_i)$ ($i=1,2$) such that 
$x\in P_1P_2$. Fix such $P_i$ and $\varphi_i$, and let 
$x_i\in P_i\le T_i$ be such that $x=x_1x_2$. Note that 
$x_1=xx_2^{-1}\in T_2$, and similarly $x_2\in T_1$.

By Definition \reff{d:Mor(E,F)}(a), 
$\5I(\varphi_1,\Id_{T_2})\in\homf(P_1T_2,S)$ sends $I(x_1,x_1^{-1})=1$ to 
$I(\varphi_1(x_1),x_1^{-1})=\varphi_1(x_1)x_1^{-1}$. Hence 
$\varphi_1(x_1)=x_1$. Also, $\varphi_2(x_2)=x_2$ by a similar argument, and 
so $\5I(\varphi_1,\varphi_2)(x)=\varphi_1(x_1)\varphi_2(x_2) =x_1x_2=x$. 
Hence $x^\calf=\{x\}$, and $x\in Z(\calf)$ by Lemma \reff{l:Z+foc}(a).

\smallskip

\noindent\textbf{(b) } If $T_1\cap T_2=1$, then $\Ker(I)=1$, and $I$ 
is an isomorphism of fusion systems by Lemma \reff{l:inj+surj=iso}(b) 
and since $\Im(I)=\cale_1\cale_2=\calf$.
\end{proof}

Motivated by Lemma \reff{l:F=E1xE2}, we now write 
$\calf=\cale_1\times\cale_2$ to mean that $\cale_1$ and $\cale_2$ are 
subsystems over $T_1$ and $T_2$ that commute in $\calf$, such that $T_1\cap 
T_2=1$ and $\cale_1\cale_2=\calf$. More generally, if 
$\cale_1,\dots,\cale_k$ is a $k$-tuple of commuting fusion subsystems of 
$\calf$, then we write $\calf=\xxx\cale$ to mean that the morphism 
$I\in\Mor(\xxx\cale,\calf)$ extending the inclusions is an isomorphism 
of fusion systems.

A fusion system $\calf$ over $S$ is \emph{indecomposable} if there are 
no fusion subsystems $\cale_1,\cale_2$ commuting in $\calf$, over 
proper subgroups $T_1,T_2<S$, such that $\calf=\cale_1\times\cale_2$. 
Our goal in the rest of the paper is to prove the essential uniqueness 
of factorizations of saturated fusion systems as products of 
indecomposable subsystems. The existence of such a factorization is 
elementary, based on the fact that discrete $p$-toral groups 
are artinian.

\begin{Prop} \label{p:exists.fact.}
Let $\calf$ be a fusion system over a discrete $p$-toral group $S$. 
Then there exist indecomposable fusion subsystems 
$\cale_1,\dots,\cale_k\le\calf$ such that $\calf=\xxx\cale$.
\end{Prop}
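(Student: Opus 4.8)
The plan is to prove the existence of a factorization into indecomposables by a form of induction that works even though $S$ may be infinite, using the fact that a discrete $p$-toral group is artinian (Lemma \reff{l:artinian}). First I would dispose of the case where $\calf$ itself is indecomposable, where we simply take $k=1$ and $\cale_1=\calf$. Then, arguing by contradiction, suppose $\calf$ has no factorization into indecomposables. Since $\calf$ is not indecomposable, $\calf=\cale_1\times\cale_2$ for some commuting subsystems over proper subgroups $T_1,T_2<S$ with $T_1\cap T_2=1$; at least one of $\cale_1$, $\cale_2$ must also fail to have such a factorization, since otherwise concatenating the two factorizations (using that if $\calg=\calg_1\times\calg_2$ and each $\calg_j$ is a product of indecomposables, then so is $\calg$ via composing the relevant isomorphisms) would give one for $\calf$. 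Say $\cale_1$ is the bad factor, a fusion system over $T_1$ with $|T_1|<|S|$.

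Iterating this produces an infinite descending chain $S=P_0>P_1>P_2>\cdots$ of subgroups of $S$, where at each stage the fusion system over $P_{n+1}$ is a ``bad'' proper direct factor of the one over $P_n$. The point is that each inclusion is strict: $|P_{n+1}|<|P_n|$ in the lexicographic order on $\N^2$, because $T_1,T_2<S$ are proper whenever $\calf=\cale_1\times\cale_2$ nontrivially, and $S^*\le S$ with $|S^*|=|S|$ forces $S^*=S$ (as noted after Definition \reff{d:discr.p-tor.}). To get an actual descending chain of \emph{subgroups} (not just a descending sequence of orders), I would choose the $P_{n+1}$ as honest subgroups of $P_n$ — each direct factor $\cale_i$ in Definition \reff{d:comm.subsyst.} is by construction a subsystem over a subgroup of $S$ — so the chain $P_0>P_1>\cdots$ is a genuine strictly descending chain of subgroups of $S$. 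This contradicts the artinian property of $S$ (Lemma \reff{l:artinian}), completing the proof.

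The main obstacle is bookkeeping: making sure the recursion is set up so that the $P_n$ form a chain of subgroups of a \emph{fixed} ambient group, and that ``is a direct factor of'' composes correctly. Concretely, one has to check that if $\calf = \cale_1\times\cale_2$ in the sense defined after Lemma \reff{l:F=E1xE2} (so the canonical morphism $I$ is an isomorphism), and $\cale_1 = \cald_1\times\cald_2$ similarly over $U_1,U_2\le T_1$ with $U_1\cap U_2=1$, then $\calf = \cald_1\times\cald_2\times\cale_2$ over $U_1\times U_2\times T_2$ — i.e., that the three subsystems commute in $\calf$ and the product morphism is an isomorphism. This follows formally because $\xxx{S}$-products are categorical products of fusion systems (the remark after Definition \reff{d:F1xF2}): composing the isomorphism $\cale_1\cong\cald_1\times\cald_2$ with the isomorphism $\calf\cong\cale_1\times\cale_2$ gives $\calf\cong\cald_1\times\cald_2\times\cale_2$, and the identifications of Sylow groups match up. Once this associativity/transitivity of the direct-factor relation is in hand, the contradiction with the descending chain condition is immediate, and no quantitative estimate beyond strict decrease of $|P_n|$ is needed.
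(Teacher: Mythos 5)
Your proof is correct and follows essentially the same route as the paper: if no factorization into indecomposables exists, one repeatedly splits off proper direct factors to produce a strictly descending chain of subgroups of $S$, contradicting the artinian property (Lemma \reff{l:artinian}). The associativity point you flag is covered by Lemma \reff{l:Z(F)xF}(a), and the paper's own (very brief) proof uses the same implicit bookkeeping.
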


\begin{proof} If there is no such factorization, then there is a 
descending sequence of fusion subsystems 
$\calf=\cale_0\ge \cale_1\ge \cale_2\ge \cdots$ over subgroups 
$S=T_0\ge T_1\ge T_2\ge \cdots$, where $\cale_i$ is a proper direct factor of 
$\cale_{i-1}$ for each $i\ge1$ and hence $T_i<T_{i-1}$. But this is 
impossible, since $S$ is artinian (Lemma \reff{l:artinian}).
\end{proof}

In the next lemma, we give another, equivalent, condition for fusion 
subsystems to be commuting.

\begin{Lem} \label{l:comm.subsyst.}
Let $\calf$ be a fusion system over a discrete $p$-toral group 
$S$, and let $\cale_1,\dots,\cale_k\le\calf$ be fusion subsystems over 
subgroups $T_1,\dots,T_k\le S$. Then the following are equivalent.
\begin{enuma} 

\item The subsystems $\cale_1,\dots,\cale_k$ commute.

\item The subgroups $T_i$ commute pairwise, and for each 
$k$-tuple of morphisms 
	\[ \bigl\{\varphi_i\in\Hom_{\cale_i}(P_i,Q_i)\bigr\}_{i=1}^k \in 
	\Mor(\cale_1)\times\cdots\times\Mor(\cale_k), \]
there is $\4\varphi\in\homf(P_1\cdots P_k,Q_1\cdots Q_k)$ 
such that $\4\varphi|_{P_i}=\varphi_i$ for each $i$.

\end{enuma}
\end{Lem}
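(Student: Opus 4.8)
The plan is to prove the equivalence by showing both directions, unwinding Definition \reff{d:comm.subsyst.} (which says $\cale_1,\dots,\cale_k$ commute iff there is a morphism $(I,\5I)\in\Mor(\xxx\cale,\calf)$ restricting to the inclusion on each factor) in terms of Definition \reff{d:Mor(E,F)} (which spells out what a morphism of fusion systems is).

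For the implication (a)$\Rightarrow$(b): suppose $(I,\5I)\in\Mor(\xxx\cale,\calf)$ restricts to the inclusion on each $\cale_i$. The underlying group homomorphism $I$ of the product system is defined on $\xxx{T}$; since $I$ restricts to the inclusion $T_i\hookrightarrow S$ on each factor and $I$ is a homomorphism, for $t_i\in T_i$ and $t_j\in T_j$ with $i\ne j$ we get $t_it_j=I(t_i)I(t_j)=I(t_it_j)=I(t_jt_i)=t_jt_i$, so the $T_i$ commute pairwise and $I$ is just the multiplication map $(t_1,\dots,t_k)\mapsto t_1\cdots t_k$. Now given $\{\varphi_i\in\Hom_{\cale_i}(P_i,Q_i)\}$, the tuple $(\varphi_1,\dots,\varphi_k)$ is a morphism in $\xxx\cale$ from $P_1\times\cdots\times P_k$ to $Q_1\times\cdots\times Q_k$ (by Definition \reff{d:F1xF2}), so $\5I(\varphi_1,\dots,\varphi_k)\in\homf(I(P_1\times\cdots\times P_k),I(Q_1\times\cdots\times Q_k))=\homf(P_1\cdots P_k,Q_1\cdots Q_k)$, and the compatibility condition in Definition \reff{d:Mor(E,F)}(a) applied to $x\in P_i\subseteq P_1\times\cdots\times P_k$ gives $\5I(\varphi_1,\dots,\varphi_k)(x)=I(\varphi_i\text{-component applied coordinatewise})=\varphi_i(x)$, using that $I$ restricts to inclusion on $T_i$. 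So $\4\varphi\defeq\5I(\varphi_1,\dots,\varphi_k)$ is the required extension.

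For the converse (b)$\Rightarrow$(a): given that the $T_i$ commute pairwise, the multiplication map $I\:\xxx{T}\too S$, $(t_1,\dots,t_k)\mapsto t_1\cdots t_k$, is a well-defined group homomorphism restricting to the inclusion on each factor. For each morphism $(\varphi_1,\dots,\varphi_k)|_P$ in $\xxx\cale$ (with $\varphi_i\in\Hom_{\cale_i}(\pr_i(P),Q_i)$ and image landing in $Q$), hypothesis (b) provides $\4\varphi\in\homf(\pr_1(P)\cdots\pr_k(P),Q_1\cdots Q_k)$ with $\4\varphi|_{\pr_i(P)}=\varphi_i$; restrict $\4\varphi$ to $I(P)$ and define $\5I$ on this morphism to be that restriction. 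The main thing to check is that this $\5I$ is a well-defined functor: one must verify it respects composition and identities, and — the subtler point — that it does not depend on the choice of a representative tuple, i.e. that distinct tuples $(\varphi_1,\dots,\varphi_k)$ and $(\varphi_1',\dots,\varphi_k')$ that restrict to the same morphism of $P$ produce compatible values. But $(\varphi_1,\dots,\varphi_k)|_P=(\varphi_1',\dots,\varphi_k')|_P$ already forces $\varphi_i|_{\pr_i(P)}=\varphi_i'|_{\pr_i(P)}$ for each $i$ (evaluate on elements of $P$ whose $j$-coordinates for $j\ne i$ are trivial, or more carefully use that $\pr_i(P)$ is generated by $i$-coordinates of elements of $P$), so the two extensions $\4\varphi$, $\4\varphi'$ agree on the generating set $\pr_1(P)\cup\cdots\cup\pr_k(P)$ of $I(P)=\pr_1(P)\cdots\pr_k(P)$, hence are equal on $I(P)$. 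Functoriality then follows because uniqueness of such an extension (both $\4\varphi$ and $\4\varphi\circ\4\psi$ are forced on the generating sets) makes composition automatic. Finally the compatibility condition of Definition \reff{d:Mor(E,F)}(a) holds by construction: for $x\in P$ with coordinates $x_i\in\pr_i(P)$, $\5I((\varphi_\bullet)|_P)(I(x))=\4\varphi(x_1\cdots x_k)=\varphi_1(x_1)\cdots\varphi_k(x_k)=I(\varphi_1(x_1),\dots,\varphi_k(x_k))$, which equals $I((\varphi_\bullet)(x))$ applied appropriately. Hence $(I,\5I)\in\Mor(\xxx\cale,\calf)$, proving the $\cale_i$ commute.

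The main obstacle I anticipate is not any single hard estimate but the bookkeeping in (b)$\Rightarrow$(a): one has to be careful that the extension $\4\varphi$ furnished by (b) is determined on $I(P)$ by its restrictions to the $\pr_i(P)$ — this uses that $P\le\pr_1(P)\times\cdots\times\pr_k(P)$ and that each $\pr_i(P)$ is exactly the set of $i$-th coordinates of elements of $P$, so that $\pr_1(P)\cdots\pr_k(P)$ is generated by $\bigcup_i\pr_i(P)$ — and that this determinacy is what upgrades the family of extensions into a genuine functor. Once well-definedness and functoriality of $\5I$ are established, checking that $(I,\5I)$ satisfies the two bullet conditions of Definition \reff{d:Mor(E,F)}(a) and restricts to the inclusion on each $\cale_i$ is routine.
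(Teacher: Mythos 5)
Your proof is correct and follows essentially the same route as the paper: in (a)$\Rightarrow$(b) you evaluate $\5I(\varphi_1,\dots,\varphi_k)$ via the compatibility condition of Definition \reff{d:Mor(E,F)}(a), and in (b)$\Rightarrow$(a) you build $\5I$ by restricting the extensions $\4\varphi$ and use their uniqueness on the generating set $\bigcup_i\pr_i(P)$ to get well-definedness and functoriality, which is exactly the paper's "this is clearly unique" step spelled out. The only slip is writing $I(P)=\pr_1(P)\cdots\pr_k(P)$ (in general $I(P)$ is only a subgroup of this product), but it is harmless since your argument shows the two extensions agree on all of $\pr_1(P)\cdots\pr_k(P)$ and hence on $I(P)$.
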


\begin{proof} \noindent\boldd{(a$\implies$b) } Let 
$\new{(I,\5I)}\in\Mor(\cale_1\times\cdots\times\cale_k,\calf)$ be the morphism that 
extends the inclusions. Thus $I(x_1,\dots,x_k)=x_1\cdots x_k$ for $x_i\in 
T_i$, so the subgroups $T_i$ commute pairwise. If 
$\{\varphi_i\in\Hom_{\cale_i}(P_i,Q_i)\}$ is a $k$-tuple of morphisms and 
$\4\varphi=\5I(\varphi_1,\dots,\varphi_k)$, then 
for each $(x_1,\dots,x_k)$ with $x_i\in P_i$, 
	\[ \4\varphi(x_1\cdots x_k) = 
	\5I(\varphi_1,\dots,\varphi_k)(I(x_1,\dots,x_k)) = 
	I(\varphi_1(x_1),\dots,\varphi_k(x_k)) = 
	\varphi_1(x_1)\cdots\varphi_k(x_k). \]
So $\4\varphi\in\homf(P_1\cdots P_k,Q_1\cdots Q_k)$ extends each of the 
$\varphi_i$. 

\smallskip

\noindent\boldd{(b$\implies$a) } Since the $T_i$ commute pairwise, we can 
define $I\in\Hom(\xxx{T},S)$ by setting $(x_1,\dots,x_k)=x_1\cdots x_k$ for 
$x_i\in T_i$. Define a functor $\5I\:\xxx\cale\too\calf$ as follows. On 
objects, we set $\5I(P)=I(P)$ for each $P\le\xxx{T}$ as usual. In 
particular, $\5I(\xxx{P})=P_1\cdots P_k$ for $P_i\le T_i$. 

By definition of the product fusion system, for each 
$\varphi\in\Hom_{\cale_1\times\cdots\times\cale_k}(P,Q)$, there are 
morphisms $\varphi_i\in\Hom_{\cale_i}(P_i,Q_i)$ (for $1\le i\le k$) 
such that 
	\[ P\le\xxx{P}, \quad Q\le\xxx{Q}, \quad\textup{and}\quad 
	\varphi=(\xxx\varphi)|_P. \]
By assumption (b), there is a morphism $\4\varphi\in\homf(P_1\cdots 
P_k,Q_1\cdots Q_k)$ that extends each of the $\varphi_i$, this is clearly 
unique, and we set $\5I(\xxx\varphi)=\4\varphi$ and 
$\5I(\varphi)=\4\varphi|_{I(P)}$. By the uniqueness of $\4\varphi$, these 
preserve composition, and hence define a functor $\5I$ from $\xxx\cale$ to 
$\calf$ associated to $I$. Thus \new{$(I,\5I)$} is a morphism of fusion 
systems, and the subsystems $\cale_i$ commute.
\end{proof}

In \cite[Definition 3.1]{Henke}, Henke defined two fusion subsystems 
$\cale_1,\cale_2\le\calf$ over $T_1,T_2\le S$ to commute in $\calf$ if 
$\cale_1\le C_\calf(T_2)$ and $\cale_2\le C_\calf(T_1)$. (See, e.g., 
\cite[Definition I.5.3]{AKO} for the definition of centralizer fusion 
systems.) It is not hard to see that this is equivalent to condition (b) in 
Lemma \reff{l:comm.subsyst.}.

The next lemma describes some of the elementary relations among 
commuting subsystems, including a form of associativity.

\begin{Lem} \label{l:Z(F)xF}
Let $\calf$ be a fusion system over a discrete $p$-toral 
group $S$, and let $\cale_1,\dots,\cale_k\le\calf$ be fusion subsystems 
over $T_1,\dots,T_k\le S$. 
\begin{enuma} 

\item For $2\le\ell<k$, $\cale_1,\dots,\cale_k$ commute in $\calf$ if and 
only if there is $\calf^*\le\calf$ such that $\cale_1,\dots,\cale_\ell$ 
commute in $\calf^*$ and $\calf^*,\cale_{\ell+1},\dots,\cale_k$ commute in 
$\calf$.

\item Assume $f\in\Mor(\calf,\cald)$ for some fusion system $\cald$. If 
$\cale_1,\dots,\cale_k$ commute in $\calf$, then the subsystems 
$\Im(f|_{\cale_1}),\dots,\Im(f|_{\cale_k})$ commute in $\Im(f)$.

\end{enuma}
\end{Lem}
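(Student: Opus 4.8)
The plan is to reduce both parts to the joint-extension criterion of Lemma~\ref{l:comm.subsyst.}(b), which says that $\cale_1,\dots,\cale_k$ commute in $\calf$ exactly when the $T_i$ commute pairwise and every tuple $\varphi_i\in\Hom_{\cale_i}(P_i,Q_i)$ has a common extension $\4\varphi\in\homf(P_1\cdots P_k,Q_1\cdots Q_k)$, together with the (elementary) fact that for $(g,\5g)\in\Mor(\cale,\cald)$ every morphism of $\Im(g,\5g)$ is a composite of restrictions of morphisms $\5g(\varphi)$ with $\varphi\in\Mor(\cale)$ (conjugation morphisms, forced by the fusion system axioms, being themselves of this form). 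For the ``if'' direction of~(a), suppose $\calf^*\le\calf$ is a fusion system over $T^*\le S$ with $\cale_1,\dots,\cale_\ell$ commuting in $\calf^*$ and $\calf^*,\cale_{\ell+1},\dots,\cale_k$ commuting in $\calf$. Applying Lemma~\ref{l:comm.subsyst.}(b) to each relation shows that $T_1,\dots,T_\ell$ and $T^*,T_{\ell+1},\dots,T_k$ commute pairwise (hence so do $T_1,\dots,T_k$, since $T_i\le T^*$ for $i\le\ell$), that every tuple $\varphi_i\in\Hom_{\cale_i}(P_i,Q_i)$ with $i\le\ell$ extends jointly to some $\psi\in\Hom_{\calf^*}(P_1\cdots P_\ell,Q_1\cdots Q_\ell)$, and that every $\psi\in\Mor(\calf^*)$ together with morphisms $\varphi_j\in\Hom_{\cale_j}(P_j,Q_j)$ ($\ell<j\le k$) extends jointly to some $\4\varphi$ in $\calf$. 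Applying these two steps in succession to an arbitrary $k$-tuple $(\varphi_i)$, and using that restriction is transitive, produces a common extension of $(\varphi_i)$ in $\calf$; so Lemma~\ref{l:comm.subsyst.} shows that $\cale_1,\dots,\cale_k$ commute in $\calf$.

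For the ``only if'' direction of~(a), assume the $\cale_i$ commute in $\calf$ with corresponding morphism $(I,\5I)\in\Mor(\xxx\cale,\calf)$, and set $\calf^*=\cale_1\cdots\cale_\ell$. This is defined, since $\cale_1,\dots,\cale_\ell$ also commute (by Lemma~\ref{l:comm.subsyst.}(b), taking $\varphi_j=\Id$ for $j>\ell$ and restricting the resulting extension), and by Definition~\ref{d:comm.subsyst.} they commute in $\calf^*$. By Lemma~\ref{l:comm.subsyst.}(b), to prove that $\calf^*,\cale_{\ell+1},\dots,\cale_k$ commute in $\calf$ it remains only to show that every $\psi\in\Mor(\calf^*)$ has a common extension with an arbitrary tuple $\varphi_j\in\Hom_{\cale_j}(P_j,Q_j)$ ($\ell<j\le k$). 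The morphisms $\psi\in\Mor(\calf^*)$ with this property are closed under restriction and under composition (for $\psi=\psi_2\psi_1$: extend $\psi_1$ jointly with the $\varphi_j$, then $\psi_2$ jointly with the identity morphisms $\Id_{Q_j}$, and compose), and they contain every morphism in the image of the restriction of $(I,\5I)$ to $\Xxx[\ell]\cale$, i.e. every $\5I(\varphi_1,\dots,\varphi_\ell)$ with $\varphi_i\in\Hom_{\cale_i}(P_i,Q_i)$: indeed $\5I(\varphi_1,\dots,\varphi_\ell,\varphi_{\ell+1},\dots,\varphi_k)$ is a morphism of $\calf$ restricting to $\varphi_i$ on each $P_i$, hence agreeing with $\5I(\varphi_1,\dots,\varphi_\ell)$ on $P_1\cdots P_\ell$ and restricting to $\varphi_j$ on each $P_j$. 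Since $\calf^*=\Im\bigl((I,\5I)|_{\Xxx[\ell]\cale}\bigr)$ is generated from these morphisms, they exhaust $\Mor(\calf^*)$, which proves the claim.

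For part~(b), given $f\in\Mor(\calf,\cald)$ with the $\cale_i$ commuting in $\calf$, set $\calg_i=\Im(f|_{\cale_i})\le\Im(f)$, a fusion subsystem over $f(T_i)$; the $f(T_i)$ commute pairwise since $f$ is a group homomorphism. By Lemma~\ref{l:comm.subsyst.}(b) it suffices to show that every tuple $\psi_i\in\Hom_{\calg_i}(R_i,R_i')$ has a common extension $\4\psi\in\Hom_{\Im(f)}(R_1\cdots R_k,R_1'\cdots R_k')$. This follows by the same closure method: if $\varphi_i\in\Mor(\cale_i)$ for each $i$, then (since the $\cale_i$ commute in $\calf$) the tuple $(\varphi_i)$ has a common extension $\4\varphi$ in $\calf$, and by Definition~\ref{d:Mor(E,F)}(a) the morphism $\5f(\4\varphi)$ lies in $\Im(f)$ and restricts to $\5f(\varphi_i)$ on each $f(P_i)$; moreover the tuples admitting a common extension in $\Im(f)$ are closed under coordinatewise restriction and under coordinatewise composition, where composites of unequal length are padded with identity morphisms. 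Since each $\Mor(\calg_i)$ is generated from the morphisms $\5f(\varphi_i)$ with $\varphi_i\in\Mor(\cale_i)$, every tuple $(\psi_i)$ has such a common extension, as required.

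I expect the main obstacle to be the two closure arguments, in the ``only if'' half of~(a) and in~(b): one has to check that ``admitting a common extension'' is stable under composing tuples whose coordinates are composites of unequal length (handled by inserting identity morphisms on the appropriate subgroups), and, more delicately, that the collection of morphisms (respectively, tuples of morphisms) under consideration actually contains a generating set for the relevant image subsystem, conjugation morphisms included.
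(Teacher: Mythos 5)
Your proposal is correct, and part (b) follows the paper's proof essentially verbatim: push a common extension $\4\varphi$ of the $\varphi_i$ through $\5f$, then handle general morphisms of $\Im(f|_{\cale_i})=\gen{\5f(\cale_i)}$ by a closure argument (you spell out the restriction/composition/identity-padding steps and the fact that conjugation morphisms in $f(T_i)$ are themselves images, which the paper leaves implicit). In part (a), however, your route differs from the paper's in both directions. For the ``if'' direction the paper simply composes the structure morphisms: it takes $J_2\in\Mor(\Xxx[\ell]\cale,\calf^*)$ and $J_1\in\Mor(\calf^*\times\cale_{\ell+1}\times\cdots\times\cale_k,\calf)$ and observes that $J_1\circ(J_2\times\Id)$ is a morphism $\xxx\cale\too\calf$ extending the inclusions; your two-step application of the extension criterion of Lemma \reff{l:comm.subsyst.}(b), gluing an extension inside $\calf^*$ with one inside $\calf$, proves the same thing more concretely at the cost of a little bookkeeping. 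For the ``only if'' direction the paper sets $\calf^*=\cale_1\cdots\cale_\ell$, notes that $\Xxx[\ell]\cale,\cale_{\ell+1},\dots,\cale_k$ commute in $\xxx\cale$, and then just invokes part (b) with $I$ in the role of $f$; your direct closure argument (showing that the morphisms of $\calf^*$ admitting common extensions with arbitrary tuples $(\varphi_j)_{j>\ell}$ are closed under restriction and composition and contain the generators $\5I(\varphi_1,\dots,\varphi_\ell)$) in effect reproves the content of (b) in this special case. Both approaches work; the paper's is shorter because it reuses (b) and the categorical product structure, while yours is more self-contained and makes explicit the generation/closure issues (including the conjugation morphisms) that the paper's terser wording glosses over.
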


\begin{proof} \noindent\textbf{(b) } Let 
$\{\varphi_i\in\Hom_{\cale_i}(P_i,Q_i)\}_{i=1}^k$ be a $k$-tuple of 
morphisms. By Lemma \reff{l:comm.subsyst.}(a$\Rightarrow$b) and since the 
$\cale_i$ commute in $\calf$, there is $\4\varphi\in\Hom_\calf(P_1\cdots 
P_k,Q_1\cdots Q_k)$ that extends each of the $\varphi_i$. Then 
$\5f(\4\varphi)\in\Hom_{\Im(f)}(f(P_1\cdots P_k),f(Q_1\cdots Q_k))$ extends 
each of the $\5f(\varphi_i)$.

Thus each $k$-tuple in $\prod_{i=1}^k\Mor(\5f(\cale_i))$ extends to a 
morphism in $\Im(f)$. The same is true for a $k$-tuple in 
$\prod_{i=1}^k\Mor(\Im(f|_{\cale_i}))$ since each morphism in 
$\Im(f|_{\cale_i})=\gen{\5f(\cale_i)}$ is a composite of morphisms in 
$\5f(\cale_i)$, and so the subsystems 
$\Im(f|_{\cale_1}),\dots,\Im(f|_{\cale_k})$ commute in $\Im(f)$ by 
Lemma \reff{l:comm.subsyst.}(b$\Rightarrow$a). 

\smallskip

\noindent\textbf{(a) } Assume first that there is $\calf^*\le\calf$ such 
that $\cale_1,\dots,\cale_\ell$ commute in $\calf^*$ and also 
$\calf^*,\cale_{\ell+1},\dots,\cale_k$ commute in $\calf$. Thus there are 
morphisms 
	\[ J_1\in\Mor(\calf^*\times\cale_{\ell+1}\times\cdots\times\cale_k,\calf) 
	\qquad\textup{and}\qquad
	J_2\in\Mor(\cale_1\times\cdots\times\cale_\ell,\calf^*) \]
each of which extends the inclusions of the different factors into the 
target. Then 
$J_1\circ(J_2\times\Id_{\cale_{\ell+1}\times\cdots\times\cale_k})$ is 
a morphism of fusion systems from $\xxx\cale$ to 
$\calf$ extending the inclusions of the $\cale_i$, so these 
subsystems commute. 

Conversely, assume $\cale_1,\dots,\cale_k$ commute in $\calf$, let 
$I\in\Mor(\xxx\cale,\calf)$ extend the inclusions, and set 
$\calf^*=\cale_1\cdots\cale_\ell$. The subsystems 
$(\xxx[\ell]\cale),\cale_{\ell+1},\dots,\cale_k$ commute in $\xxx\cale$. So 
by (b), applied with $I$ in the role of $f$, 
$(\cale_1\cdots\cale_\ell),\cale_{\ell+1},\dots,\cale_k$ commute in $\Im(I)$ 
and hence in $\calf$. 
\end{proof}

The next lemma gives conditions for a saturated fusion system to factorize 
as a product when the underlying discrete $p$-toral group factorizes.

\begin{Lem} \label{l:F=ExD}
Let $\calf$ be a saturated fusion system over a discrete $p$-toral group 
$S$. Assume $T,U\le S$ are such that $S=T\times U$, and set 
$\cale=\calf|_{\le T}$ and $\cald=\calf|_{\le U}$. Assume also that $T$ and 
$U$ are strongly closed in $\calf$, and that $\cale$ and $\cald$ commute in 
$\calf$. Then $\calf=\cale\times\cald$.
\end{Lem}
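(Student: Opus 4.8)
The plan is to reduce the statement to the single assertion $\cale\cald=\calf$. Granting that, the conclusion $\calf=\cale\times\cald$ is immediate from the meaning of that notation: $\cale$ and $\cald$ commute in $\calf$ by hypothesis, $T\cap U=1$ because $S=T\times U$, and $\cale\cald=\calf$ is exactly what would have been shown; see also Lemma \reff{l:F=E1xE2}(b).

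Since $\cale$ and $\cald$ commute, there is a morphism $(I,\5I)\in\Mor(\cale\times\cald,\calf)$ extending the two inclusions, with $I\colon T\times U\to S$ the multiplication map $(x,y)\mapsto xy$; and $\cale\cald=\Im(I)=\gen{\5I(\cale\times\cald)}$ is a fusion subsystem of $\calf$ whose top group is $TU=S$. In particular $\cale\cald$ is closed under composition and restriction of morphisms, and contains $\Hom_S(P,Q)$ for all $P,Q\le S$. To prove $\cale\cald=\calf$ I would invoke a suitable form of Alperin's fusion theorem for saturated fusion systems over discrete $p$-toral groups (see \cite{BLO3}): every morphism of $\calf$ is a composite of inclusions and of restrictions of automorphisms $\alpha\in\autf(R)$, where $R$ ranges over the $\calf$-centric $\calf$-radical subgroups of $S$ and over $R=S$. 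Inclusions already lie in $\Hom_S\subseteq\cale\cald$, so it suffices to show $\autf(R)\subseteq\Aut_{\cale\cald}(R)$ for each such $R$.

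Fix such an $R$. By Lemma \reff{l:Fcr/S1xS2} applied with $(S_1,S_2)=(T,U)$ (or, when $R=S$, using the given decomposition $S=T\times U$), we have $R=R_T\times R_U$ where $R_T:=R\cap T\le T$ and $R_U:=R\cap U\le U$. Let $\alpha\in\autf(R)$. Since $T$ is strongly closed in $\calf$, $\alpha(x)\in x^\calf\subseteq T$ for every $x\in R_T$, and $\alpha(x)\in R$ as well, so $\alpha(R_T)\le R\cap T=R_T$; applying the same to $\alpha^{-1}\in\autf(R)$ gives $\alpha(R_T)=R_T$, and likewise $\alpha(R_U)=R_U$. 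Hence $\alpha$ restricts to $\alpha_T\in\autf(R_T)=\Aut_\cale(R_T)$ and $\alpha_U\in\autf(R_U)=\Aut_\cald(R_U)$, the last equalities holding because $\cale$ and $\cald$ are full subcategories of $\calf$. Thus $(\alpha_T,\alpha_U)$ is a morphism of $\cale\times\cald$ with domain $R_T\times R_U=R$, and since $R=R_TR_U$ the only morphism of $\calf$ on $R$ restricting to $\alpha_T$ on $R_T$ and to $\alpha_U$ on $R_U$ is $\alpha$ itself; therefore $\alpha=\5I(\alpha_T,\alpha_U)\in\5I(\cale\times\cald)\subseteq\cale\cald$. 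This proves $\autf(R)\subseteq\Aut_{\cale\cald}(R)$, hence $\cale\cald=\calf$, hence $\calf=\cale\times\cald$.

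The only step that goes beyond formal bookkeeping is the appeal to Alperin's fusion theorem, which allows one to restrict attention to $\calf$-centric $\calf$-radical subgroups $R$; it is precisely for those that Lemma \reff{l:Fcr/S1xS2} forces a product decomposition $R=R_T\times R_U$ compatible with $S=T\times U$. For a general pair of ``diagonal'' subgroups of $T\times U$, a morphism between them need not obviously be built from morphisms of $\cale$ and of $\cald$, so this reduction is where the real content lies; the compatibility of strong closure with the decomposition and the identification $\alpha=\5I(\alpha_T,\alpha_U)$ are routine.
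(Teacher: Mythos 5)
Your proposal is correct and follows essentially the same route as the paper's proof: reduce to showing $\Im(I)=\calf$, invoke the version of Alperin's fusion theorem in \cite[Theorem 3.6]{BLO3} to restrict to $\calf$-centric $\calf$-radical subgroups, apply Lemma \reff{l:Fcr/S1xS2} to get $R=(R\cap T)\times(R\cap U)$, use strong closure and fullness of $\cale,\cald$ to split each $\alpha\in\autf(R)$, and conclude with Lemma \reff{l:F=E1xE2}(b). The only cosmetic differences are your separate handling of $R=S$ and the explicit uniqueness remark identifying $\alpha$ with $\5I(\alpha_T,\alpha_U)$, both of which are harmless.
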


\begin{proof} Let $I\in\Mor(\cale\times\cald,\calf)$ be the morphism 
that extends the inclusions. Then $\Ker(I)=T\cap U=1$, so by Lemma 
\reff{l:inj+surj=iso}, $\Im(I)\le\calf$ is a fusion subsystem over $S=TU$. 
We will show that $\Im(I)=\calf$. Once this has been shown, then 
$\calf=\cale\times\cald$ by Lemma \reff{l:F=E1xE2}(b). 

Assume $P\le S$ is $\calf$-centric and $\calf$-radical (Definition 
\ref{d:Z+foc}). By Lemma 
\reff{l:Fcr/S1xS2}, we have $P=P_1\times P_2$, where $P_1=P\cap T$ and 
$P_2=P\cap U$. For each $\alpha\in\autf(P)$, 
$\alpha|_{P_i}\in\autf(P_i)$ for $i=1,2$ since $T$ and $U$ are strongly 
closed, and hence $\alpha|_{P_1}\in\Aut_\cale(P_1)$ and 
$\alpha|_{P_2}\in\Aut_\cald(P_2)$ since $\cale$ and $\cald$ are full 
subcategories. So $\alpha=\5I(\alpha|_{P_1},\alpha|_{P_2})\in\Aut_{\Im(I)}(P)$. 

By the version of Alperin's fusion theorem shown in \cite[Theorem 
3.6]{BLO3}, each morphism in $\calf$ is a (finite) composite of 
restrictions of $\calf$-automorphisms of $\calf$-centric $\calf$-radical 
subgroups of $S$. So every morphism in $\calf$ is in $\Im(I)$, and 
$\calf=\Im(I)=\cale\times\cald$. 
\end{proof}

\nnewpage

\section{Sums and summability of endomorphisms}

\newcommand\zero[1]{{\bf0}_{#1}}

We next define sums of morphisms between a pair of fusion systems.

\begin{Defi} \label{d:f1+f2}
Let $\cale$ and $\calf$ be fusion systems over discrete $p$-toral 
groups $T$ and $S$. 
\begin{enuma} 

\item A $k$-tuple of morphisms $f_1,\dots,f_k\in\Mor(\cale,\calf)$ 
(for $k\ge2$) is 
\emph{summable} if the fusion subsystems $\Im(f_1),\dots,\Im(f_k)$ commute. 
When this is the case, $f_1+\dots+f_k\in\Hom(T,S)$ is the 
morphism 
	\[ (f_1+\dots+f_k)(x) = f_1(x)\cdots f_k(x)\in S 
	\qquad\qquad (\textup{all}~ x\in T). \]

\item Let $\zero{}=\zero{\cale,\calf}\in\Mor(\cale,\calf)$ be the neutral 
element for sums of morphisms: the homomorphism sending $T$ to 
$1\in S$. Write $\zero\calf=\zero{\calf,\calf}\in\End(\calf)$ for short.

\end{enuma}
\end{Defi}

We first check that a sum of summable morphisms from $\cale$ to $\calf$ is, 
in fact, a morphism from $\cale$ to $\calf$.

\begin{Lem} \label{l:f1+f2}
Let $\cale$ and $\calf$ be fusion systems over discrete $p$-toral 
groups $T$ and $S$, and let 
$f_1,\dots,f_k\in\Mor(\cale,\calf)$ be a summable $k$-tuple of morphisms. 
Then $f_1+\dots+f_k\in\Mor(\cale,\calf)$, and 
$\Im(f_1+\dots+f_k)\subseteq\Im(f_1)\cdots\Im(f_k)$. 
\end{Lem}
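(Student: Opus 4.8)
The plan is to verify directly that the homomorphism $g := f_1+\dots+f_k \in \Hom(T,S)$, together with a suitable functor $\5g$, satisfies the two conditions of Definition \ref{d:Mor(E,F)}(a), and that the associated functor lands in the subsystem $\Im(f_1)\cdots\Im(f_k)$. First I would invoke the summability hypothesis: since $\Im(f_1),\dots,\Im(f_k)$ commute in $\calf$, by Definition \ref{d:comm.subsyst.} there is a morphism $(I,\5I)\in\Mor(\Im(f_1)\times\cdots\times\Im(f_k),\calf)$ restricting to the inclusion on each factor, with image $\Im(f_1)\cdots\Im(f_k)$. The key observation is that $g$ factors as the composite $T \xto{(f_1,\dots,f_k)} S_1\times\cdots\times S_k \xto{I} S$ at the level of groups (where $S_i$ is the underlying group of $\Im(f_i)$ and $f_i$ is viewed as a map $T\to S_i$), and correspondingly I would define $\5g$ to be the composite functor $\cale \xto{(\5f_1,\dots,\5f_k)} \Im(f_1)\times\cdots\times\Im(f_k) \xto{\5I} \calf$.

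The two bullet conditions of Definition \ref{d:Mor(E,F)}(a) for $(g,\5g)$ then follow by chasing them through the two composable morphisms, each of which already satisfies those conditions. For the object condition, $\5g(P) = \5I(\5f_1(P)\times\cdots\times\5f_k(P)) = f_1(P)\cdots f_k(P) = g(P)$. For the morphism condition, given $\varphi\in\Hom_\cale(P,Q)$ and $x\in P$, I compute $\5g(\varphi)(g(x)) = \5I(\5f_1(\varphi),\dots,\5f_k(\varphi))\bigl(I(f_1(x),\dots,f_k(x))\bigr)$, and using that $(I,\5I)$ is a morphism this equals $I(\5f_1(\varphi)(f_1(x)),\dots,\5f_k(\varphi)(f_k(x))) = I(f_1(\varphi(x)),\dots,f_k(\varphi(x))) = g(\varphi(x))$, using the morphism condition for each $(f_i,\5f_i)$. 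Hence $(g,\5g)\in\Mor(\cale,\calf)$. Finally, $\Im(g,\5g) = \gen{\5g(\cale)} = \gen{\5I((\5f_1,\dots,\5f_k)(\cale))} \le \gen{\5I(\Im(f_1)\times\cdots\times\Im(f_k))} = \Im(f_1)\cdots\Im(f_k)$, which is the asserted inclusion.

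The only mild subtlety — and the one place I would be careful — is that $\5f_i(\cale)$ need not be a subcategory of $\Im(f_i)$ a priori (as the discussion after Definition \ref{d:Mor(E,F)} notes), so one should take the functor $(\5f_1,\dots,\5f_k)$ as genuinely landing in the product category $\Im(f_1)\times\cdots\times\Im(f_k)$ (whose morphism sets are as in Definition \ref{d:F1xF2}) and check that this is well-defined: for $\varphi\in\Hom_\cale(P,Q)$, the tuple $(\5f_1(\varphi),\dots,\5f_k(\varphi))$ restricted to $f_1(P)\times\cdots\times f_k(P)$ is indeed a morphism of the product system, since each $\5f_i(\varphi)\in\Hom_{\Im(f_i)}(f_i(P),f_i(Q))$ and the tuple carries $f_1(P)\times\cdots\times f_k(P)$ into $f_1(Q)\times\cdots\times f_k(Q)$. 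Once that is in place, composing with $(I,\5I)$ is legitimate and the computations above go through verbatim. I do not expect any real obstacle here; the lemma is essentially bookkeeping built on the already-established functoriality of products and of commuting-subsystem morphisms.
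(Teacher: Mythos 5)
Your route is the same as the paper's --- factor $g=f_1+\dots+f_k$ at the group level as $I\circ(f_1,\dots,f_k)$ and compose the functors $(\5f_1,\dots,\5f_k)$ and $\5I$ --- but one step as written is false: the object-level identity ``$\5g(P)=f_1(P)\cdots f_k(P)=g(P)$''. The composite functor you define sends $P$ to $f_1(P)\cdots f_k(P)$, whereas $g(P)=\{\,f_1(x)\cdots f_k(x)\mid x\in P\,\}$ is in general a \emph{proper} subgroup of that product: take $k=2$, $S=T\times T$, $f_1=(x\mapsto(x,1))$ and $f_2=(x\mapsto(1,x))$; then $g(P)$ is the diagonal copy of $P$ while $f_1(P)f_2(P)=P\times P$. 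So the composite functor is not itself a functor associated to $g$ in the sense of Definition \ref{d:Mor(E,F)}(a): the first bullet there fails.

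The repair is small, and your own computation already contains it --- it is exactly what the paper does. The identity $\5g(\varphi)(g(x))=g(\varphi(x))$ that you verify shows that the value of the composite functor on $\varphi\in\Hom_\cale(P,Q)$ carries $g(P)$ into $g(Q)$; hence one should define the functor associated to $g$ on objects by $P\mapsto g(P)$ and on morphisms by restricting $\5I(\5f_1(\varphi),\dots,\5f_k(\varphi))$ to $g(P)$. (Equivalently, restrict at the first stage: the homomorphism $(f_1,\dots,f_k)\in\Hom(T,f_1(T)\times\cdots\times f_k(T))$, together with the functor sending $P$ to its image $\{(f_1(x),\dots,f_k(x))\mid x\in P\}$ and $\varphi$ to the restriction of the tuple $(\5f_1(\varphi),\dots,\5f_k(\varphi))$, is an honest morphism from $\cale$ to $\Im(f_1)\times\cdots\times\Im(f_k)$, and composing with $(I,\5I)$ gives $(g,\5g)$.) With this correction the image statement needs one extra word as well: each morphism of $\Im(g)$ is a restriction of a morphism of $\Im(I)=\Im(f_1)\cdots\Im(f_k)$, and a fusion subsystem is closed under such restrictions (precompose with the inclusion), so $\Im(g)\le\Im(f_1)\cdots\Im(f_k)$. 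Apart from this point, your argument coincides with the paper's proof.
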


\begin{proof} Set $f=f_1+\dots+f_k$ for short. As a homomorphism of groups, 
$f$ is the composite
	\[ T \Right6{(f_1,\dots,f_k)} f_1(T)\times\cdots\times f_k(T) 
	\Right4{I} S, \]
where $I(x_1,\dots,x_k)=x_1\cdots x_k$ for $x_i\in f_i(T)\le S$. By 
assumption, there are functors $\5f_i$ associated to $f_i$ and $\5I$ 
associated to $I$, and we let $\5f^*$ denote the composite functor 
	\[ \5f^*\: \cale \Right6{(\5f_1,\dots,\5f_k)} \Im(f_1) 
	\times\cdots\times \Im(f_k) \Right4{\5I} \calf. \]
Then $\5f^*(P)=f_1(P)\cdots f_k(P)$ for $P\le T$, and the following diagram 
of groups commutes for each $\varphi\in\Hom_\cale(P,Q)$: 
	\[ \xymatrix@C=50pt@R=25pt{ 
	P \ar[r]^-{f|_P} \ar[d]^{\varphi} & 
	f_1(P)\cdots f_k(P) \ar[d]^{\5f^*(\varphi)} \\
	Q \ar[r]^-{f|_Q} & f_1(Q)\cdots f_k(Q).
	} \] 
Thus $\5f^*(\varphi)(f(P))\le f(Q)$. So if we define $\5f\:\cale\too\calf$ 
to be the functor that sends $P$ to $f(P)$ and sends 
$\varphi\in\Hom_\cale(P,Q)$ to the restriction $\5f^*(\varphi)|_{f(P)}$, 
then this is a functor associated to $f$. So $f\in\Mor(\cale,\calf)$.

In particular, every morphism in $\Im(f)$ is the restriction of a morphism 
in $\Im(I)$, and hence is also in $\Im(I)$. Since 
$\Im(f_1)\cdots\Im(f_k)=\Im(I)$ by definition, this proves that 
$\Im(f)\le\Im(f_1)\cdots\Im(f_k)$.
\end{proof}

We next check that a composite of sums of morphisms is a sum of composites 
in the way one expects. Since this is clear on the level of group 
homomorphisms, the main problem is to check summability.

\begin{Lem} \label{l:(f1+f2)(f3+f4)}
Let $\cald$, $\cale$, and $\calf$ be fusion systems over discrete 
$p$-toral groups, and assume that $f_1,\dots,f_n\in\Mor(\cale,\calf)$ and 
$g_1,\dots,g_m\in\Mor(\cald,\cale)$ are summable $m$- and $n$-tuples of 
morphisms (some $m,n\ge1$). Then $\{f_ig_j \,|\, 1\le i\le n,~ 1\le j\le 
m\}$ is summable, and 
	\beqq (f_1+\dots+f_n)\circ(g_1+\dots+g_m) = 
	\sum_{i=1}^n\sum_{j=1}^m f_ig_j. \label{e:distrib} \eeqq
\end{Lem}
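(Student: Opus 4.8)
The identity \eqref{e:distrib} is trivial on the level of group homomorphisms: writing $f=f_1+\dots+f_n$, which is a homomorphism, we get $\bigl(f\circ(g_1+\dots+g_m)\bigr)(x)=f(g_1(x))\cdots f(g_m(x))=\prod_{j}\prod_i f_i(g_j(x))$, and once $\{f_ig_j\}$ is known to be summable the images $\Im(f_ig_j)$ commute pairwise in $S$ (Lemma \ref{l:comm.subsyst.}), so this product equals $\bigl(\sum_{i,j}f_ig_j\bigr)(x)$ regardless of the order chosen for the double sum. Since the functor attached to a morphism is determined by its underlying homomorphism (Definition \ref{d:Mor(E,F)}), the whole content of the lemma is the \emph{summability} of the family $\{f_ig_j:1\le i\le n,\ 1\le j\le m\}$, i.e.\ that the subsystems $\Im(f_ig_j)$ all commute in $\calf$. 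The plan is to prove this first in two special cases and then in general.

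The two special cases are: (i) if $f_1,\dots,f_n$ is summable and $g\in\Mor(\cald,\cale)$ is a single morphism, then $\{f_ig\}$ is summable and $(f_1+\dots+f_n)\circ g=\sum_i f_ig$; and (ii) if $f\in\Mor(\cale,\calf)$ is a single morphism and $g_1,\dots,g_m$ is summable, then $\{fg_j\}$ is summable and $f\circ(g_1+\dots+g_m)=\sum_j fg_j$. For (i) one has $\Im(f_ig)=\gen{\5f_i(\5g(\cald))}\le\gen{\5f_i(\cale)}=\Im(f_i)$, so restricting the morphism $\prod_i\Im(f_i)\to\calf$ (which exists because the $\Im(f_i)$ commute) to the fusion subsystem $\prod_i\Im(f_ig)$ shows the $\Im(f_ig)$ commute in $\calf$. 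For (ii) I would first record the identity $\Im(hg)=\Im(h|_{\Im(g)})$ for composable morphisms $g,h$ — valid because the functor $\5h$ carries inclusions to inclusions and so carries $\Im(g)=\gen{\5g(\cald)}$ into $\gen{\5h(\5g(\cald))}=\Im(hg)$ — and then apply Lemma \ref{l:Z(F)xF}(b) to $f$ and the subsystems $\Im(g_1),\dots,\Im(g_m)$ (which commute in $\cale$ by summability of $g_1,\dots,g_m$) to conclude that $\Im(fg_j)=\Im(f|_{\Im(g_j)})$, $j=1,\dots,m$, commute in $\Im(f)\le\calf$.

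For the general statement I would combine (i) and (ii). Applying (ii) to each $f_i$ shows that, for each $i$, the subsystems $\Im(f_ig_1),\dots,\Im(f_ig_m)$ commute \emph{inside} $\Im(f_i)$. I would then prove a short combining lemma: if $\cald_1,\dots,\cald_n$ commute in $\calf$ and, for each $i$, $\cale_{i,1},\dots,\cale_{i,m}\le\cald_i$ commute in $\cald_i$, then the whole family $\{\cale_{i,j}\}$ commutes in $\calf$ — obtained by composing the morphism $\prod_i\cald_i\to\calf$ with the product of the morphisms $\prod_j\cale_{i,j}\to\cald_i$ and checking that the composite restricts to the inclusion on each $\cale_{i,j}$. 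With $\cald_i=\Im(f_i)$ and $\cale_{i,j}=\Im(f_ig_j)$ this yields summability of the full family $\{f_ig_j\}$. Finally, (i) applied to $f_1,\dots,f_n$ and the single morphism $g=g_1+\dots+g_m$ gives $(f_1+\dots+f_n)\circ g=\sum_i f_ig=\sum_i\sum_j f_ig_j$, the last equality being a legitimate rearrangement of homomorphisms since all images commute, which is \eqref{e:distrib}. I expect the only real work to lie in the bookkeeping of the combining lemma and the two special cases — verifying that products and composites of morphisms of fusion systems are again morphisms, and that the condition of restricting to inclusions on each factor chains correctly through these constructions — together with the small but essential identity $\Im(hg)=\Im(h|_{\Im(g)})$ needed to invoke Lemma \ref{l:Z(F)xF}(b); none of it is deep, but that is where all the care must go.
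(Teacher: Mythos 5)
Your proposal is correct and follows essentially the paper's own argument: the paper likewise reduces the lemma to the summability of $\{f_ig_j\}$, applies Lemma \reff{l:Z(F)xF}(b) to each $f_i$ with the commuting family $\Im(g_1),\dots,\Im(g_m)$ (using implicitly the identity $\Im(f_ig_j)=\Im(f_i|_{\Im(g_j)})$ that you make explicit), and then combines via repeated applications of Lemma \reff{l:Z(F)xF}(a), which is precisely your ``combining lemma''; the displayed formula is then the same element-wise computation. Your special case (i) and your direct re-proof of the combining step are minor repackaging rather than a genuinely different route.
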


\begin{proof} By assumption, the fusion subsystems 
$\Im(g_1),\dots,\Im(g_m)$ commute in $\cale$. So for each $1\le i\le 
n$, the subsystems $\Im(f_ig_1),\dots,\Im(f_ig_m)$ commute in $\Im(f_i)$ 
by Lemma \reff{l:Z(F)xF}(b). Since the $\Im(f_i)$ commute in $\calf$, 
the subsystems $\Im(f_ig_j)$ for all $1\le i\le n$ and $1\le j\le m$ commute 
in $\calf$ by repeated applications of Lemma \reff{l:Z(F)xF}(a). 

Thus the $f_ig_j$ are summable. Equation \eqreff{e:distrib} holds since 
for each $x\in U$, 
	\begin{align*} 
	(f_1+\dots+f_n)(g_1+\dots g_m)(x) 
	&= f_1(g_1(x)\cdots g_m(x)) \cdots f_n(g_1(x)\cdots g_m(x)) \\
	&= \textstyle\prod\limits_{i=1}^n\prod\limits_{j=1}^m f_ig_j(x) 
	= \biggl(\textstyle\sum\limits_{i=1}^n\sum\limits_{j=1}^m 
	f_ig_j\biggr)(x). \qedhere 
	\end{align*}
\end{proof}

\nnewpage

\section{Normal endomorphisms}

An endomorphism of a group $G$ is defined to be normal if it commutes with 
all inner automorphisms of $G$, \new{and} it is not \new{at all} obvious 
how to translate this directly to an \new{analogous} definition for fusion 
systems. \new{We refer to Remark \ref{r:EndN(G)} below for more discussion 
about the difficulties with such a definition.} 

Instead, we use a different property of normal endomorphisms of 
groups. It is an easy exercise to show that $f\in\End(G)$ is normal if and 
only if there is $\chi\in\End(G)$ such that $[\Im(f),\Im(\chi)]=1$ and 
$f+\chi=\Id_G$, and this criterion is easily adapted to endomorphisms of 
fusion systems.

\begin{Defi} \label{d:normal.end.}
Let $\calf$ be a fusion system over a discrete $p$-toral group $S$. An 
endomorphism $f\in\End(\calf)$ is \emph{normal} if there is 
$\chi\in\End(\calf)$ such that $f$ and $\chi$ are summable and 
$f+\chi=\Id_\calf$. Let $\End^N(\calf)\ge\Aut^N(\calf)$ denote the sets 
of normal endomorphisms and automorphisms of $\calf$.
\end{Defi}

\new{As one example, assume $G$ is a finite group with $S\in\sylp{G}$, and 
let $f\in\End(G)$ be a normal endomorphism such that $f(S)\le S$. Then 
there is $\chi\in\End(G)$ such that $f+\chi=\Id_G$, and the corresponding 
relation holds for $f|_S$ and $\chi|_S$ as endomorphisms of the fusion 
system $\calf_S(G)$. So $f|_S$ is a normal endomorphism of $\calf_S(G)$.}

We first check some of the most basic properties of normal endomorphisms.

\begin{Lem} \label{l:normal.end.1}
Let $\calf$ be a saturated fusion system over a discrete $p$-toral group $S$. 
\begin{enuma}

\item If $f,f'\in\End^N(\calf)$, then $f\circ f'$ is normal. 

\item If $f,f'\in\End^N(\calf)$ and $f\circ f'=\zero\calf$, then $f$ and $f'$ 
are summable and $f+f'$ is normal.

\item If $\calf=\cale_1\times\cale_2$, and $f\in\End(\calf)$ is the 
identity on $\cale_1$ and trivial on $\cale_2$, then $f$ is normal.

\end{enuma}
\end{Lem}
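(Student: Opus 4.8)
\textbf{Proof plan for Lemma \ref{l:normal.end.1}.}

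The plan is to prove the three parts in the order (c), (b), (a), since (c) provides a basic building block and (b) is the main technical step, while (a) follows from (b) by the usual trick of writing $f\circ f'$ as a difference. For part (c), suppose $\calf=\cale_1\times\cale_2$ over $S=T_1\times T_2$ with $T_1\cap T_2=1$, and let $f\in\End(\calf)$ be the identity on $\cale_1$ and trivial on $\cale_2$. I would define $\chi\in\End(\calf)$ to be trivial on $\cale_1$ and the identity on $\cale_2$; the existence of $\chi$ as an actual endomorphism of $\calf$ (not merely of $S$) follows because $\cale_1,\cale_2$ commute in $\calf$ and one can read off the associated functor from the product structure, exactly as in Lemma \ref{l:comm.subsyst.}(b$\Rightarrow$a). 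Then $\Im(f)=\cale_1$ and $\Im(\chi)=\cale_2$ commute by hypothesis, so $f$ and $\chi$ are summable, and on group level $(f+\chi)(x_1x_2)=f(x_1x_2)\chi(x_1x_2)=x_1x_2$ for $x_i\in T_i$, so $f+\chi=\Id_\calf$. Hence $f$ is normal.

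For part (b), assume $f,f'\in\End^N(\calf)$ with $f\circ f'=\zero\calf$. Pick $\chi,\chi'\in\End(\calf)$ with $f+\chi=\Id_\calf=f'+\chi'$, both sums being summable. The key computation is to apply the distributivity lemma (Lemma \ref{l:(f1+f2)(f3+f4)}): from $\Id_\calf=(f+\chi)\circ(f'+\chi')$ we get, once summability of the four composites $f f',f\chi',\chi f',\chi\chi'$ is established, that $\Id_\calf = f f' + f\chi' + \chi f' + \chi\chi' = \zero\calf + f\chi' + \chi f' + \chi\chi'$, using $f f'=\zero\calf$. Now I would observe that on the level of group homomorphisms $f\chi'=f(\Id-f')=f$ (since $ff'=\zero\calf$, i.e. $f\circ f'$ sends everything to $1$, so $f(f'(x))=1$ and $f(\chi'(x))=f(x)$), and similarly $\chi f'=f'$. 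So the identity reads $\Id_\calf = f + f' + \chi\chi'$ as group homomorphisms, and since all the pieces involved are summable as morphisms of fusion systems, this exhibits $f+f'$ as summable (it is a partial sum of a summable tuple: the images $\Im(f),\Im(f')$ both commute with each other and with $\Im(\chi\chi')$, so in particular $\Im(f)$ and $\Im(f')$ commute) and exhibits $\chi\chi'$ as the complementary summand witnessing that $f+f'$ is normal, provided $\chi\chi'$ is a genuine endomorphism of $\calf$ — which it is, being a composite of endomorphisms.

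For part (a), given $f,f'\in\End^N(\calf)$, choose $\chi$ with $f+\chi=\Id_\calf$ summable. Then $(f\circ f')\circ$ together with $(\chi\circ f')$: applying Lemma \ref{l:(f1+f2)(f3+f4)} to $(f+\chi)\circ f' = f\circ f' + \chi\circ f'$ shows $f\circ f'$ and $\chi\circ f'$ are summable with sum $\Id_\calf\circ f' = f'$. Now $f'$ is itself normal, so there is $\psi$ with $f'+\psi=\Id_\calf$ summable; I would then want to produce a complement for $f\circ f'$. Writing $\theta = \chi\circ f' + \psi$ (first checking this pair is summable — their images commute because $\Im(\chi\circ f')\le\Im(f')$ commutes with $\Im(\psi)$, and one also needs $\Im(f\circ f')$ to commute with $\Im(\theta)$, which follows since $\Im(f\circ f')\le\Im(f)$ and $\Im(f)$ commutes with both $\Im(\chi)\supseteq\Im(\chi f')$ and... here one uses that $f+\chi=\Id$ forces $\Im(f)$ and $\Im(\chi)$ to commute, and $\Im(\psi)$ commutes with $\Im(f')\supseteq\Im(ff')$), one computes $f\circ f' + \theta = (f\circ f' + \chi\circ f') + \psi = f' + \psi = \Id_\calf$, all sums summable by repeated use of Lemmas \ref{l:Z(F)xF} and \ref{l:(f1+f2)(f3+f4)}. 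Hence $f\circ f'$ is normal.

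\textbf{Main obstacle.} The real content, and the step I expect to require the most care, is bookkeeping the summability (i.e. the commuting-of-images) conditions in parts (a) and (b): one must repeatedly invoke that $\Im(g\circ h)\le\Im(g)$ (so it commutes with whatever $\Im(g)$ commutes with) and that the defining relation $f+\chi=\Id_\calf$ forces $\Im(f)$ and $\Im(\chi)$ to commute in $\calf$, and then assemble these into commuting $k$-tuples so that Lemma \ref{l:(f1+f2)(f3+f4)} and the associativity of commuting (Lemma \ref{l:Z(F)xF}(a)) apply. Once the summability is in hand, the algebraic identities among the group homomorphisms are forced and routine.
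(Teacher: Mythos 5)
Your parts (b) and (c) are correct and essentially the same as the paper's proof: in (c) the two projections sum to $\Id_\calf$, and in (b) you expand $\Id_\calf=(f+\chi)\circ(f'+\chi')$ via Lemma \ref{l:(f1+f2)(f3+f4)}, use $f\circ f'$ trivial to identify $f\chi'=f$ and $\chi f'=f'$ as morphisms, and conclude that $f,f',\chi\chi'$ form a summable triple with sum $\Id_\calf$, so $\chi\chi'$ is the required complement. That is exactly the argument in the paper.

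Part (a), however, has a genuine gap. Your summability bookkeeping for the triple $(f\circ f',\ \chi\circ f',\ \psi)$ rests on the containments $\Im(\chi\circ f')\le\Im(f')$ and $\Im(f\circ f')\le\Im(f')$, and both are false: with composition from right to left, $\Im(g\circ h)=\gen{\5g\5h(\calf)}$ lies in $\Im(g)$, the image of the \emph{outer} morphism (as you yourself use one line earlier for $\Im(f\circ f')\le\Im(f)$), not in $\Im(h)$. Consequently the commutation of $\Im(f\circ f')$ and of $\Im(\chi\circ f')$ with $\Im(\psi)$ is not established: $\psi$ is a complement of $f'$, and nothing in your setup relates $\Im(f)$ or $\Im(\chi)$ to $\Im(\psi)$. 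The detour through $\psi$ is also unnecessary: the paper proves (a) in one line by applying Lemma \ref{l:(f1+f2)(f3+f4)} to $\Id_\calf=(f+\chi)\circ(f'+\chi')$, which yields the summable $4$-tuple $ff',f\chi',\chi f',\chi\chi'$; then $f\chi'+\chi f'+\chi\chi'$ is an endomorphism of $\calf$ by Lemma \ref{l:f1+f2}, its image commutes with $\Im(ff')$ by Lemma \ref{l:Z(F)xF}(a), and it is summable with $ff'$ with sum $\Id_\calf$, so it is directly the complement exhibiting $ff'$ as normal. (Your argument can be repaired along these lines, e.g.\ by taking $\psi=\chi'$ and decomposing $\chi'=f\chi'+\chi\chi'$ inside the same $4$-tuple, but that amounts to reverting to the paper's expansion.)
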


\begin{proof} \textbf{(a,b) } Fix a pair $f,f'$ of normal endomorphisms of 
$\calf$, and let $\chi,\chi'\in\End(\calf)$ be such that $f$ and $\chi$ are 
summable, $f'$ and $\chi'$ are summable, and $f+\chi=\Id_\calf=f'+\chi'$. 
By Lemma \reff{l:(f1+f2)(f3+f4)}, $\Id_\calf = 
(f+\chi)\circ(f'+\chi')=ff'+(f\chi'+\chi f'+\chi\chi')$, and so $ff'$ is 
normal.

If $ff'=\zero\calf$, then by Lemma \reff{l:(f1+f2)(f3+f4)}, 
	\[ \Id_\calf = (f+\chi)\circ(f'+\chi') = f(f'+\chi') + (f+\chi)f' + 
	\chi\chi' = f + f' + \chi\chi'. \]
Hence $f$ and $f'$ are summable, and $f+f'$ is normal.

\smallskip

\noindent\textbf{(c) } If $\calf=\cale_1\times\cale_2$, and 
$f_1,f_2\in\End(\calf)$ are the projections with images $\cale_1$ and 
$\cale_2$, respectively, then $f_1+f_2=\Id_\calf$. So $f_1$ and $f_2$ are 
normal.
\end{proof}

We do not know whether or not the sum of each summable pair of normal 
endomorphisms is normal. (This is clearly true for normal endomorphisms of 
a group.) One can at least weaken the extra hypothesis in Lemma 
\reff{l:normal.end.1}(b): it suffices to assume that $ff'(S)\le Z(\calf)$. 

By point (a) above, $\End^N(\calf)$ is a monoid for each fusion system 
$\calf$. In the next lemma, we show that $\Aut^N(\calf)$ is always a group.

\begin{Lem} \label{l:[f,S]<Z(F)}
Let $\calf$ be a saturated fusion system over a discrete $p$-toral group 
$S$, and assume  $f\in\End(\calf)$ is surjective. Then 
\begin{enuma} 
\item $f$ is normal if and only if $[f,S]\le Z(\calf)$ and 
$f|_{\foc(\calf)}=\Id$; and 
\item if $f$ is normal and invertible, then $f^{-1}$ is also normal.
\end{enuma}
\end{Lem}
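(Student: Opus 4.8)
I want to prove Lemma~\ref{l:[f,S]<Z(F)}. The two parts have rather different characters, so I will take them in turn.

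\textbf{Part (a).} Suppose first that $f$ is normal, so there is $\chi\in\End(\calf)$ with $f$ and $\chi$ summable and $f+\chi=\Id_\calf$. Summability means $\Im(f)$ and $\Im(\chi)$ commute in $\calf$, so in particular $f(S)$ and $\chi(S)$ commute as subgroups of $S$, and for all $x\in S$ we have $x=f(x)\chi(x)$ with $[f(S),\chi(S)]=1$. The first thing to extract is that $f|_{\foc(\calf)}=\Id$: given $x,y\in S$ with $y\in x^\calf$, say $y=\varphi(x)$ for $\varphi\in\homf(\gen x,\gen y)$, apply the functor $\5{\Id_\calf}$-decomposition — more precisely, since $\cale_1:=\Im(f)$ and $\cale_2:=\Im(\chi)$ commute, Lemma~\ref{l:comm.subsyst.}(a$\Rightarrow$b) gives a common extension; comparing $\varphi$ with $\5f(\varphi)$ and $\5\chi(\varphi)$ (each applied to $f(x)$, resp. $\chi(x)$) and using $x=f(x)\chi(x)$, $y=f(y)\chi(y)$, one gets $xy^{-1}=\bigl(f(x)f(y)^{-1}\bigr)\bigl(\chi(x)\chi(y)^{-1}\bigr)$ as a product of an element of $f(S)$ and a commuting element of $\chi(S)$; but also $xy^{-1}\in\foc(\calf)$, and I want to conclude $f(xy^{-1})=xy^{-1}$. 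The cleanest route is to observe $f\circ\chi$ has image in $Z(\calf)$: indeed $f=f\circ(f+\chi)=f\circ f+f\circ\chi$ forces (since this is again a sum) $\Im(f\circ\chi)$ to commute with $\Im(f\circ f)\supseteq$ something large; I should instead mimic the group argument directly — in a group, $f$ normal with $f+\chi=\Id$ gives $f\chi=\chi f$ and $f(x)=x$ for $x\in[G,G]$, because $[f(G),\chi(G)]=1$ makes $f$ a retraction onto $\Im(f)$ which is normal with abelian-ish complement behavior on commutators. Translating: for $x\in\foc(\calf)$ I want $\chi(x)=1$, equivalently $x=f(x)$. Since $\foc(\calf)$ is generated by the $xy^{-1}$ above, it suffices to treat those, and the displayed factorization together with the fact that $f(S)\cap\chi(S)\le Z(\calf)$ (an analogue of Lemma~\ref{l:F=E1xE2}(a), since $\Im(f)$ and $\Im(\chi)$ commute and together generate $\Im(f)\Im(\chi)$) should pin it down after projecting. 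Similarly $[f,S]\le Z(\calf)$: for $x\in S$, $x^{-1}f(x)=\chi(x)^{-1}$... one sees $[f,S]\le f(S)\cap\chi(S)\cdot(\text{stuff})$; again the commuting-subsystems center lemma is the engine. Conversely, if $[f,S]\le Z(\calf)$ and $f|_{\foc(\calf)}=\Id$, I must \emph{build} $\chi$. Define $\chi$ on $S$ by $\chi(x)=x f(x)^{-1}$; this lands in $[f,S]^{-1}\le Z(\calf)$, wait it lands in $\gen{x f(x)^{-1}}$, and $[f,S]\le Z(\calf)\le Z(S)$, so $\chi(x)\in Z(S)$ and $\chi$ is a homomorphism $S\to Z(S)$; moreover $\chi$ factors through $S/\foc(\calf)$ by the second hypothesis. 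I then need $\chi\in\End(\calf)$, i.e. to produce the functor $\5\chi$: since $\chi(S)\le Z(\calf)$ is central, every $\calf$-morphism $\varphi\in\homf(P,Q)$ should map to the restriction to $\chi(P)$ of a suitable extension; because $\chi$ factors through $S/\foc(\calf)$ and morphisms in $\calf$ act trivially on that quotient (that is exactly what $\foc(\calf)$ measures), $\5\chi(\varphi)$ can be taken to be $\Id_{\chi(P)}$ — this is where I must check carefully that the pair $(\chi,\5\chi)$ satisfies the two bullets of Definition~\ref{d:Mor(E,F)}(a). Finally summability of $f$ and $\chi$: $\Im(\chi)$ is contained in the fusion system on $\chi(S)\le Z(\calf)$ with only identity morphisms (up to the central extension data), which commutes with everything, in particular with $\Im(f)$; and $f+\chi=\Id_\calf$ by construction on objects and because both sides carry every $\varphi$ to $\varphi$ (the $\5\chi(\varphi)=\Id$ contributes nothing). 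That gives normality.

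\textbf{Part (b).} Assume $f$ is normal and invertible. By (a), $[f,S]\le Z(\calf)$ and $f|_{\foc(\calf)}=\Id$. I want the same two conditions for $f^{-1}$. For the focal subgroup: $f$ restricts to the identity on $\foc(\calf)$, and $f^{-1}$ preserves $\foc(\calf)$ (it is characteristic), so $f^{-1}|_{\foc(\calf)} = (f|_{\foc(\calf)})^{-1}=\Id$. For the commutator condition: for $x\in S$, write $y=f^{-1}(x)$, so $x=f(y)$ and $[f,S]\le Z(\calf)$ gives $y^{-1}x = y^{-1}f(y)=[y,f]^{-1}\cdots$, i.e. $x=y\cdot z$ with $z\in Z(\calf)$; then $x^{-1}f^{-1}(x)=x^{-1}y=z^{-1}\in Z(\calf)$, so $[f^{-1},S]\le Z(\calf)$. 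Now $f^{-1}$ is certainly surjective (it is invertible), so by the already-proved direction of (a), $f^{-1}$ is normal.

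\textbf{Main obstacle.} The genuinely delicate point is the converse in (a): manufacturing the functor $\5\chi$ and verifying that $(\chi,\5\chi)$ is a legitimate morphism of fusion systems and that it is summable with $f$. One has to know precisely why a homomorphism into $Z(\calf)$ that kills $\foc(\calf)$ automatically underlies an endomorphism of $\calf$ (Definition~\ref{d:Z+foc} and Lemma~\ref{l:Z+foc} are the relevant tools, plus the fact that $Z(\calf)$ is central so extensions exist), and why ``central image'' forces commuting with $\Im(f)$ in the sense of Definition~\ref{d:comm.subsyst.}. Everything else — the two bullets of Definition~\ref{d:Mor(E,F)}(a), the computation $f+\chi=\Id_\calf$, and all of part (b) — is a routine translation of the corresponding facts for groups, using Lemma~\ref{l:(f1+f2)(f3+f4)} and Lemma~\ref{l:F=E1xE2}(a) where a little structure is needed.
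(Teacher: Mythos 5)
The converse half of (a) (defining $\chi(x)=xf(x)^{-1}$, noting it lands in $Z(\calf)\le Z(S)$ and kills $\foc(\calf)$, and taking $\5\chi(\varphi)=\Id_{\chi(P)}$) and all of part (b) are essentially the paper's proof, and are fine. The problem is the forward direction of (a), where you never actually prove $f|_{\foc(\calf)}=\Id$: the factorization $xy^{-1}=\bigl(f(x)f(y)^{-1}\bigr)\bigl(\chi(x)\chi(y)^{-1}\bigr)$ by itself gives nothing, and your closing remark that the claim ``should pin it down after projecting'' does not work --- there is no projection available, since $S$ is \emph{not} a direct product of $f(S)$ and $\chi(S)$ (indeed $f(S)=S$ because $f$ is surjective, so the two factors overlap completely). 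The digression through ``$f\circ\chi$ has image in $Z(\calf)$'' is likewise abandoned rather than completed, so the focal-subgroup statement is left unproved.

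The missing idea, and the order in which the paper proceeds, is this: first establish $[f,S]=\chi(S)\le Z(\calf)$, using that surjectivity gives $\Im(f)=\calf$ and $f(S)=S$, so that $\chi(S)=f(S)\cap\chi(S)$ and $\Im(f)\Im(\chi)=\calf$; hence Lemma \reff{l:F=E1xE2}(a) applies verbatim (no ``analogue'' is needed --- your hesitation there comes from not using surjectivity, which is a hypothesis of the lemma). \emph{Then} deduce the focal statement from centrality of $\chi(S)$: if $y=\varphi(x)$ with $\varphi\in\homf(\gen{x},\gen{y})$, functoriality of $\5\chi$ gives $\chi(y)=\5\chi(\varphi)(\chi(x))$, and since $\chi(x)\in Z(\calf)$ it is fixed by every morphism of $\calf$ (Lemma \reff{l:Z+foc}(a)), so $\chi(y)=\chi(x)$. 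Thus $\chi$ kills every generator $xy^{-1}$ of $\foc(\calf)$, i.e. $\foc(\calf)\le\Ker(\chi)$, and then $x=f(x)\chi(x)=f(x)$ for $x\in\foc(\calf)$. You gesture at ``comparing $\varphi$ with $\5f(\varphi)$ and $\5\chi(\varphi)$'' but never invoke the fixed-point property of central elements, which is the one step that makes the argument close; trying to do the focal subgroup before knowing $\chi(S)\le Z(\calf)$ is what led the attempt astray.
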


\begin{proof}
If $f$ is normal, then there is $\chi\in\End(\calf)$ such that $f$ and 
$\chi$ are summable and $f+\chi=\Id_\calf$, and in particular, 
$\Im(f)=\calf$ commutes with $\Im(\psi)$. So 
$[f,S]=\chi(S)=f(S)\cap\chi(S)\le Z(\calf)$ by Lemma \reff{l:F=E1xE2}(a). 
Hence $x\in S$ and $y\in x^\calf$ imply that $\chi(y)=\chi(x)$, so 
$\foc(\calf)\le\Ker(\chi)$, and $f|_{\foc(\calf)}=\Id$. 

Conversely, if $[f,S]\le Z(\calf)$, then we can define $\chi$ by 
setting $\chi(x)=f(x)^{-1}x\in Z(\calf)$ for $x\in S$. Then $\chi\in\End(S)$, 
$\chi(S)\le Z(\calf)$, and $f+\chi=\Id_S$ as endomorphisms of $S$. 
If in addition, $f|_{\foc(\calf)}=\Id$, then $\Ker(\chi)\ge\foc(\calf)$, so 
$\chi$ is constant on $\calf$-conjugacy classes. Hence there is a functor 
$\5\chi\:\calf\too\calf$ associated to $\chi$, defined on objects by 
setting $\5\chi(P)=\chi(P)\le Z(\calf)$ for each $P\le S$, and on morphisms 
by setting $\5\chi(\varphi)=\Id_{\chi(P)}$ for each $\varphi\in\homf(P,Q)$. 
So $\chi\in\End(\calf)$, and $f$ is normal. 

\smallskip

\noindent\textbf{(b) } If $f\in\End(\calf)$ is normal and invertible, then 
$[f^{-1},S]=[f,S]\le Z(\calf)$, and $f^{-1}|_{\foc(\calf)}=\Id$ since 
$f|_{\foc(\calf)}=\Id$. So $f^{-1}$ is normal by (a). 
\end{proof}

\new{By comparison, an automorphism of a group $G$ is normal if and only if 
it induces the identity on $[G,G]$ and on $G/Z(G)$ (see 
\cite[6.18.ii]{Sz1}). This gives another way to see that when $G$ is finite 
and $S\in\sylp{G}$, each normal automorphism of $G$ that sends $S$ to 
itself induces a normal automorphism of the fusion system $\calf_S(G)$.}

We can now show that the image of a normal endomorphism is always 
saturated and is a full subcategory.

\begin{Prop} \label{p:normal.end.2}
Let $\calf$ be a saturated fusion system over a discrete $p$-toral group 
$S$, let $\calf\in\End^N(\calf)$ be a normal endomorphism, and set $T=f(S)$. 
Let $\chi\in\End^N(\calf)$ be such that 
$f+\chi=\Id_\calf$, and set $U=\chi(S)$. Then 
\begin{enuma} 

\item $f\chi=\chi f$ and $f(U)=\chi(T)=T\cap U\le Z(\calf)$; 

\item $T$ is strongly closed in $\calf$ (see Definition \reff{d:Z+foc}); 

\item $f$ commutes in $\End(\calf)$ with all elements of 
$\autf(S)$; and 

\item $\Im(f)=\5f(\calf)=\calf|_{\le T}$ and is a saturated fusion 
subsystem of $\calf$.

\end{enuma}
\end{Prop}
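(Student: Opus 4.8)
The plan is to combine the normality criterion from Lemma~\ref{l:[f,S]<Z(F)} with the factorization results of Section~2, proving the four parts essentially in the order stated, since later parts build on earlier ones.

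\smallskip

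For \textbf{(a)}, since $f+\chi=\Id_\calf$, Lemma~\ref{l:f1+f2} and the fact that $\Im(f)$, $\Im(\chi)$ commute give $[\Im(f),\Im(\chi)]$-type control; more precisely, applying Lemma~\ref{l:[f,S]<Z(F)}(a) to both $f$ and $\chi$ (noting $\chi=\Id_\calf-f$ is also surjective onto $U$, or rather that $\chi|_{\foc(\calf)}=\Id$ forces... — here one must be slightly careful, as $\chi$ need not be surjective as an endomorphism of $\calf$; instead I would work directly with the group homomorphisms). On the level of $\End(S)$, $f(x)\chi(x)=x$ for all $x$, and $[f,S]=\chi(S)=U$, $[\chi,S]=f(S)\cap\,\cdots$; the cleanest route is: $[f,S]\le Z(\calf)$ by Lemma~\ref{l:[f,S]<Z(F)}(a), so $U=\chi(S)=[f,S]\le Z(\calf)$, hence $U\le Z(S)$ is central, so $f$ and $\chi$ commute on $S$ because $\chi$ lands in the center. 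Then $T\cap U$: since $f+\chi=\Id$, for $u\in U$ we have $u=f(u)\chi(u)$ with $\chi(u)\in U\le Z(\calf)$, and iterating $f$ on the (central, hence $f$-invariant up to...) — I would show $f(U)=\chi(T)$ directly from $f\chi=\chi f$ and $f+\chi=\Id$, then observe $f(U)\le T$ and $\chi(T)\le U$ so $f(U)=\chi(T)\le T\cap U$, and conversely any $z\in T\cap U$ satisfies $z=f(z)\chi(z)$; since $z\in U\le Z(\calf)$ is fixed by morphisms, and $z\in T=f(S)$... the reverse inclusion should drop out. This bookkeeping is routine once $U\le Z(\calf)$ is in hand.

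\smallskip

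For \textbf{(b)}: $T=f(S)=\Im(f)$ at the group level; given $x\in T$ and $y\in x^\calf$, pick $\varphi\in\homf(\gen x,\gen y)$ with $\varphi(x)=y$, write $x=f(s)$, and use that $\5f(\varphi)$ (the functor on $\Im(f)$, which equals... ) sends $f(s)$ to $f(\varphi'(s))$ for a suitable $\varphi'$ — more simply, $y=\varphi(x)=\varphi(f(s))$ and applying $f$ to the conjugacy relation $\varphi$ shows $f(y)=y'$ lies in $T$; but we want $y\in T$, not $f(y)$. The correct argument: since $f+\chi=\Id$ and $\chi(S)\le Z(\calf)$, $\chi$ is constant on $\calf$-conjugacy classes, so $\chi(y)=\chi(x)$, whence $y=f(y)\chi(y)=f(y)\chi(x)$ and $x=f(x)\chi(x)$ give $yx^{-1}=f(y)f(x)^{-1}\in T$, but also $x\in T$, so $y\in T$. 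Thus $T$ is strongly closed. Then \textbf{(d)}: Lemma~\ref{l:f1+f2} gives $\5f$ a functor with $\5f(\calf)$ having morphism sets that are restrictions; I would argue $S=f(S)\cdot\chi(S)=T\cdot Z(\calf)$, so in particular $S=TC_S(T)$ since $Z(\calf)\le Z(S)\le C_S(T)$; then $T$ strongly closed plus $S=TC_S(T)$ invokes Lemma~\ref{l:S=TCS(T)} to conclude $\calf|_{\le T}$ is saturated. It remains to identify $\Im(f)=\5f(\calf)=\calf|_{\le T}$: the inclusion $\5f(\calf)\subseteq\calf|_{\le T}$ is clear; for the reverse, each $\varphi\in\homf(P,Q)$ with $P,Q\le T$ equals $\5f(\psi)|_P$ where $\psi$ is obtained by extending $\varphi$ over $Z(\calf)$ acting trivially — i.e., $f$ restricted to $T$ is "surjective up to center" — and since $f|_T$ differs from $\Id_T$ by something landing in $Z(\calf)$ (as $[f,S]\le Z(\calf)$ so $[f,T]\le Z(\calf)$), one checks $\5f$ is in fact essentially surjective and full onto $\calf|_{\le T}$.

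\smallskip

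\textbf{The main obstacle} I anticipate is part \textbf{(d)}, specifically showing $\5f(\calf)=\calf|_{\le T}$ rather than just a subsystem of it: one must produce, for an arbitrary $\calf$-morphism between subgroups of $T$, an $\calf$-morphism whose $\5f$-image it is, and this requires understanding precisely how $f|_T$ acts — it is not the identity, only the identity modulo $Z(\calf)$ and on $\foc(\calf)$. The key leverage is that every morphism in $\calf$ restricts to/extends compatibly with the central subgroup $Z(\calf)\supseteq U$, so composing with the "central correction" $\chi$ undoes the discrepancy; making this precise, together with checking the functoriality claims of Lemma~\ref{l:f1+f2} apply cleanly, is where the real work sits. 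Parts (a), (b), (c) are comparatively mechanical given Lemmas~\ref{l:F=E1xE2}, \ref{l:[f,S]<Z(F)}, and~\ref{l:S=TCS(T)}.
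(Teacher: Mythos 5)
There is a genuine gap, and it sits at the very first step: you invoke Lemma \reff{l:[f,S]<Z(F)}(a) to conclude $[f,S]\le Z(\calf)$, hence $U=\chi(S)\le Z(\calf)$. That lemma is stated (and is only true) for \emph{surjective} endomorphisms, whereas in Proposition \reff{p:normal.end.2} the endomorphism $f$ is an arbitrary normal endomorphism --- the non-surjective case is exactly the point of the proposition. The intermediate claim $U\le Z(\calf)$ is in fact false: take $\calf=\cale_1\times\cale_2$ with $Z(\calf)=1$ and let $f$ be the projection onto $\cale_1$, which is normal by Lemma \reff{l:normal.end.1}(c); then $\chi$ is the projection onto $\cale_2$ and $U=T_2\nleq Z(\calf)$. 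Since your arguments for (a) ($f\chi=\chi f$ ``because $\chi$ lands in the center''), for (b) (``$\chi$ is constant on $\calf$-conjugacy classes''), and for (d) (``$S=T\cdot Z(\calf)$'', ``extend $\varphi$ over $Z(\calf)$ acting trivially'') all lean on $U\le Z(\calf)$, the proposal collapses at this point. What is true, and all that the proposition asserts, is $\chi(T)=f(U)=T\cap U\le Z(\calf)$: one gets $f\chi=\chi f$ by evaluating $f+\chi=\Id$ at $x$ and at $f(x)$ and cancelling ($f(f(x))\chi(f(x))=f(x)=f(f(x))f(\chi(x))$), and then $T\cap U\le Z(\calf)$ comes from Lemma \reff{l:F=E1xE2}(a), using that $\Im(f)$ and $\Im(\chi)$ commute and $\Im(f)\Im(\chi)=\calf$ (Lemma \reff{l:f1+f2}) --- no appeal to Lemma \reff{l:[f,S]<Z(F)} is possible here.

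Once only $\chi(T)\le Z(\calf)$ (not $\chi(S)\le Z(\calf)$) is available, your idea for (b) can be repaired: for $t\in T$ and $\varphi\in\homf(\gen{t},S)$ one has $\chi(\varphi(t))=\5\chi(\varphi)(\chi(t))=\chi(t)$ because $\chi(t)\in T\cap U\le Z(\calf)$, so $\varphi(t)=f(\varphi(t))\chi(t)\in T$. But part (c) is not addressed at all in your sketch (the paper proves it by the same mechanism: $\chi\alpha=\alpha\chi$ on $T$ and on $U$ separately, using centrality of $\chi(T)$ and $f(U)$, then $S=TU$), and the fullness half of (d) --- the real content --- is missing. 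Since $U$ is not central, a morphism $\varphi\in\homf(P,Q)$ with $P,Q\le T$ cannot be ``extended over $Z(\calf)$''; the paper instead extends $\varphi$ to some $\4\varphi\in\homf(PU,QU)$ using receptivity (passing through a receptive member of $P^\calf$, which lies in $T$ by (b)) together with strong closure of $U$ (part (b) applied to $\chi$ in the role of $f$), and then verifies by the computation $\5f(\4\varphi)(x)=f(\varphi(x))\chi(\varphi(x))=\varphi(x)$ for $x\in P$ that $\5f(\4\varphi)$ restricts to $\varphi$, so $\varphi\in\Hom_{\5f(\calf)}(P,Q)$. Saturation of $\calf|_{\le T}$ then follows from Lemma \reff{l:S=TCS(T)}, where $S=TC_S(T)$ comes from $S=TU$ and $[T,U]=1$ (summability), not from $S=TZ(\calf)$.
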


\begin{proof} Since $f$ and $\chi$ are summable, we have $[T,U]=1$. Since 
$x=f(x)\chi(x)$ for each $x\in S$, we have $TU=S$. As usual, 
$\5f,\5\chi\:\calf\too\calf$ denote the functors associated to $f$ and 
$\chi$. 

\smallskip

\noindent\textbf{(a) } For each $x\in S$, \new{
	\[ 
	f(f(x))\chi(f(x)) = 
	f(x)=f(f(x)\chi(x))= f(f(x))f(\chi(x)). \] 
So $\chi f(x)=f\chi(x)$ for each $x\in S$, and $\chi f=f\chi\in\End(S)$.} 

In particular, 
$f(U)=f\chi(S)=\chi f(S)=\chi(T)\le T\cap U$. For each $x\in T\cap 
U$, $x=f(x)\chi(x)\in f(U)\chi(T)=f(U)$, and so $f(U)=\chi(T)=T\cap U$. 

Since $f$ and $\chi$ are summable and $f+\chi=\Id_\calf$, the subsystems 
$\Im(f)$ and $\Im(\chi)$ commute, and 
$\Im(f)\Im(\chi)=\calf$ by Lemma \reff{l:f1+f2}. Hence $T\cap 
U\le Z(\calf)$ by Lemma \reff{l:F=E1xE2}(a).

\smallskip

\noindent\textbf{(b) } Assume $t\in T$ and $\varphi\in\homf(\gen{t},S)$. 
Then $\chi(t)\in\chi(T)=T\cap U$. Also, 
$\varphi(t)=f(\varphi(t))\chi(\varphi(t))$, where $f(\varphi(t))\in 
f(S)=T$, and where $\chi(\varphi(t))=\5\chi(\varphi)(\chi(t))=\chi(t)\in 
T\cap U$ by (a) and since $\chi(t)\in T\cap U\le Z(\calf)$. Thus 
$\varphi(t)\in T$, so $t^\calf\subseteq T$, and $T$ is strongly closed in 
$\calf$. 


\smallskip

\noindent\textbf{(c) } Fix $\alpha\in\autf(S)$; we must show that 
$f\alpha=\alpha f$. For each $x\in S$, 
	\[ f(\alpha(x))\chi(\alpha(x)) = \alpha(x) = \alpha(f(x)\chi(x)) = 
	\alpha f(x)\alpha\chi(x), \]
so $f\alpha(x)=\alpha f(x)$ if and only if $\chi\alpha(x)=\alpha\chi(x)$.

For each $t\in T$, $\chi(t)\in\chi(T)\le Z(\calf)$ by (a), so 
	\[ \chi(\alpha(t)) = \5\chi(\alpha)(\chi(t)) = \chi(t) = 
	\alpha(\chi(t)). \]
Thus $\chi\alpha|_T=\alpha\chi|_T$, and by the above remarks, this implies 
$f\alpha|_T=\alpha f|_T$. The restrictions to $U$ commute by a similar 
argument. Since $S=TU$, we have $f\alpha=\alpha f$ and 
$\chi\alpha=\alpha\chi$. 

\smallskip

\noindent\textbf{(d) } Fix $P,Q\le T$ and $\varphi\in\homf(P,Q)$. We must 
show that $\varphi\in\Hom_{\5f(\calf)}(P,Q)$. 

If $\varphi(P)$ is receptive in $\calf$, then since $U\le C_S(P)\le 
N_\varphi^\calf$, $\varphi$ extends to some $\4\varphi\in\homf(PU,QU)$, where 
$\4\varphi(U)=U$ since $U$ is strongly closed by (b). If $\varphi(P)$ 
is not receptive, let $R\in P^\calf$ be such that $R$ is 
receptive; then $R\le T$ since $T$ is strongly closed, and 
there are isomorphisms $\4\varphi_1\in\homf(PU,RU)$ and 
$\4\varphi_2\in\homf(\varphi(P)U,RU)$ such that $\4\varphi_i(U)=U$, 
$\4\varphi_1(P)=R$, $\4\varphi_2(\varphi(P))=R$, and 
$\4\varphi_2^{-1}\4\varphi_1\in\homf(PU,\varphi(P)U)$ extends 
$\varphi$. So in all cases, we get $\4\varphi\in\homf(PU,QU)$ extending 
$\varphi$.

Set $\psi=\5f(\4\varphi)\in\Hom_{\5f(\calf)}(f(PU),f(QU))$. We claim that 
$\varphi$ is a restriction of $\psi$ (in particular, that $P\le f(PU)$). 
For each $x\in P$, $x=f(x)\chi(x)$ where $f(x)\in f(P)$ and 
$\chi(x)\in\chi(T)=f(U)\le Z(\calf)$, so $x\in f(PU)$ and 
	\begin{align*} 
	\psi(x) &= \psi(f(x)\chi(x)) = \5f(\4\varphi)(f(x))\cdot\psi(\chi(x)) \\
	&= f(\4\varphi(x))\cdot\chi(x) 
	= f(\varphi(x))\cdot\5\chi(\varphi)(\chi(x)) 
	\tag{$\chi(x)\in Z(\calf)$} \\
	&= f(\varphi(x))\chi(\varphi(x)) = \varphi(x) .
	\end{align*}
So $\psi|_P=\varphi$, and hence $\varphi\in\Hom_{\5f(\calf)}(P,Q)$. 

Thus $\Im(f)=\5f(\calf)=\calf|_{\le T}$: the full subcategory of $\calf$ 
with objects the subgroups of $T$. Since $S=TU$, $[T,U]=1$, and $T$ is 
strongly closed in $\calf$, Lemma \reff{l:S=TCS(T)} now implies that 
$\Im(f)$ is a saturated fusion subsystem of $\calf$. 
\end{proof}

\begin{New} 
\begin{Rmk} \label{r:EndN(G)}
As stated above, an endomorphism of a group is defined to be normal if it 
commutes with all inner automorphisms. When $\calf$ is a saturated fusion 
system over $S$, the closest we can come to inner automorphisms are the 
elements of $\autf(S)$, and Proposition \reff{p:normal.end.2}(c) says that 
each normal endomorphism of $\calf$ commutes with all of these. However, 
the converse is not true. If, for example, $S$ and $\autf(S)$ are both 
abelian, then each $\alpha\in\autf(S)$ lies in $\Aut(\calf)$ and commutes 
with $\autf(S)$. But $\alpha$ need not be the identity on 
$[S,\autf(S)]\le\foc(\calf)$, and hence by Lemma \reff{l:[f,S]<Z(F)}(a) need 
not be normal. 
\end{Rmk}
\end{New}

Recall (Definition \reff{d:loc.nilp.}) that when $\calf$ is a fusion system 
over $S$, an endomorphism $f\in\End(\calf)\le\End(S)$ is locally nilpotent 
if $S=\bigcup_{i=1}^\infty\Ker(f^i)$. If $S$ is finite, then clearly all 
locally nilpotent endomorphisms are nilpotent. The next lemma, which is 
modeled on (2.4.9) in \cite{Sz1}, implies among other things that this is 
also true whenever $Z(\calf)$ is finite.

\begin{Prop} \label{p:F=ExD}
Let $\calf$ be a saturated fusion system over a discrete $p$-toral group 
$S$, and let $f\in\End^N(\calf)$ be a normal endomorphism. Then there is a 
unique pair of saturated fusion subsystems $\cale,\cald\le\calf$, over 
$T,U\le S$, such that 
\begin{itemize} 
\item $\calf=\cale\times\cald$;
\item $f|_T\in\Aut^N(\cale)$; and 
\item $f|_U\in\End^N(\cald)$ is locally nilpotent, and is nilpotent if 
$Z(\calf)$ is finite. 
\end{itemize}
If $f$ is surjective, then \new{$U$ is connected and central in $\calf$,}
and hence $U=1$ if $|Z(\calf)|<\infty$. 
\end{Prop}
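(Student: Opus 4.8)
The plan is to reduce the problem to the abelian situation handled by Lemma \ref{l:End(S)-2}, applied to the restriction of $f$ to a suitable strongly closed abelian subgroup, and then to use Lemma \ref{l:F=ExD} to upgrade the resulting group-level splitting to a splitting of fusion systems. First I would observe that, since $f$ is normal, Lemma \ref{l:[f,S]<Z(F)}(a) gives $[f,S]\le Z(\calf)$ and $f|_{\foc(\calf)}=\Id$, so $f$ acts trivially on $\foc(\calf)$ and induces the identity on $S/Z(S)$. The key abelian object to feed into Lemma \ref{l:End(S)-2} is not $S$ itself but something like $A=Z(\calf)$ or, more precisely, the subgroup on which $f$ can fail to be injective; because $[f,S]\le Z(\calf)$, the endomorphism $f$ is ``unipotent modulo $Z(\calf)$'', and its non-injectivity and non-surjectivity are concentrated in $Z(\calf)$. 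I would first reduce to the surjective case: replacing $\calf$ by $\Im(f)$ is not quite legitimate since $f$ need not be surjective, so instead I would argue that $f^n$ stabilizes for large $n$ in an appropriate sense — on each $\Omega_n(Z(\calf))$ the images $\Im(f^i)$ stabilize since these are finite groups — and use this to define $T=\bigcap_i \Im(f^i)\cdot(\text{non-central part})$ and $U=\bigcup_i\Ker(f^i)$, checking $S=T\times U$ with $U\le Z(\calf)$ connected.

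More concretely, the steps I would carry out are: (1) Show $U\defeq\bigcup_{i\ge1}\Ker(f^i)\le Z(\calf)$: if $x\in\Ker(f^i)$ then for any $y\in x^\calf$ we have (using $f|_{\foc(\calf)}=\Id$, so $f$ is constant on $\calf$-classes modulo fixing, i.e. $f(x^{-1}y)=x^{-1}y$) that $f^i(y)=f^i(x)\cdot(x^{-1}y)=x^{-1}y$, forcing $y=x$ by iterating; hence $x\in Z(\calf)$ by Lemma \ref{l:Z+foc}(a). So $f|_U$ is a locally nilpotent endomorphism of the abelian group $U$. (2) Apply Lemma \ref{l:End(S)-2} to $f$ restricted to $Z(S)$ (or to a larger strongly closed abelian subgroup containing $U$) — actually, since $[f,S]\le Z(\calf)\le Z(S)$ and $f$ is the identity mod $Z(S)$, the ``contracting'' part of $f$ lives inside $Z(S)$, so writing $Z(S)=T'\times U$ with $f|_{T'}\in\Aut(T')$ and $f|_U$ locally nilpotent, set $T=$ the preimage structure making $S=T\times U$; one must check $T$ is a subgroup complementing $U$, which works because $U$ is central. (3) Check $T$ and $U$ are strongly closed in $\calf$: $U$ since $U\le Z(\calf)$ (Lemma \ref{l:Z+foc}(b)); $T$ needs a separate argument, probably that $T\ge\foc(\calf)$ or a direct check using that $T$ is $f$-invariant and $f$-related to the $\calf$-conjugacy action. (4) Verify $\cale\defeq\calf|_{\le T}$ and $\cald\defeq\calf|_{\le U}$ commute: since $U\le Z(\calf)$, every morphism of $\calf$ extends over $U$ acting trivially (definition of central subgroup), which gives the commuting morphism $\cale\times\cald\to\calf$. (5) Apply Lemma \ref{l:F=ExD} to conclude $\calf=\cale\times\cald$. (6) Observe $f|_T$ is surjective and injective hence in $\Aut^N(\cale)$ (normality of $f|_T$ from $f$ being normal and $T$ being a direct factor — use Lemma \ref{l:[f,S]<Z(F)}), and $f|_U$ is locally nilpotent by construction, nilpotent when $Z(\calf)$ is finite since then $U\le Z(\calf)$ is finite. (7) Uniqueness: any such decomposition must have $U^*\le\bigcup_i\Ker(f^i)=U$ and $\Omega_n(T^*)\le\bigcap_i\Im(f^i|_{\Omega_n})$, mirroring the uniqueness argument in Lemma \ref{l:End(S)-2}, and then $T^*=T$, $\cale^*=\cale$ etc. follow.

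For the final sentence: if $f$ is surjective, then $\bigcup_i\Ker(f^i)$ still equals $U$, but now I claim $U$ itself is connected and central. Centrality is already established ($U\le Z(\calf)$). Connectedness follows exactly as in the last paragraph of the proof of Lemma \ref{l:End(S)-2}: since $f$ is surjective, $f|_U$ is a surjective locally nilpotent endomorphism of the abelian discrete $p$-toral group $U$, so $U/\Ker((f|_U)^k)\cong U$ for all $k$, and choosing $k$ with $U_0\cdot\Ker((f|_U)^k)=U$ (possible since $U_0$ has finite index and $U=\bigcup\Ker((f|_U)^i)$) exhibits $U$ as a quotient of the connected group $U_0$, hence connected. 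Finally, if $|Z(\calf)|<\infty$ then $U\le Z(\calf)$ is finite and connected, hence trivial.

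The main obstacle I anticipate is step (2)–(3): producing the direct-product decomposition $S=T\times U$ at the group level with $T$ strongly closed. Lemma \ref{l:End(S)-2} only directly applies to abelian $S$, so one must localize the argument to a strongly closed abelian subgroup (such as $Z(S)$ or the subgroup $[f,S]\cdot U$ together with its fixed points) and then argue that, because $U$ is central, a complement $T$ to $U$ in $S$ that is $f$-invariant and strongly closed exists. Verifying $T$ is strongly closed — rather than merely $f$-invariant — is the delicate point; I expect one shows $\foc(\calf)\le T$ (since $f|_{\foc(\calf)}=\Id$ forces $\foc(\calf)\cap U=1$, and $\foc(\calf)$ maps isomorphically under $f$) which makes $T$ strongly closed by Lemma \ref{l:Z+foc}(b), possibly after replacing $T$ by $T\foc(\calf)$ and re-checking $T\cap U=1$. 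The rest is a faithful translation of the argument behind (2.4.9) in \cite{Sz1}.
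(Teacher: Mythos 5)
There is a genuine gap at the very first step, and it undermines the whole strategy. You invoke Lemma \reff{l:[f,S]<Z(F)}(a) to conclude that $[f,S]\le Z(\calf)$ and $f|_{\foc(\calf)}=\Id$, but that lemma has the hypothesis that $f$ is \emph{surjective}, whereas the proposition is precisely about arbitrary normal endomorphisms. For a non-surjective normal endomorphism these conclusions are simply false: take $\calf=\cale_1\times\cale_2$ over $S=T_1\times T_2$ and let $f$ be the projection onto $\cale_1$ (normal by Lemma \reff{l:normal.end.1}(c)). Then $f$ kills $\foc(\cale_2)$, $[f,S]=T_2$, and $U=\bigcup_i\Ker(f^i)=T_2$, which is in general nowhere near $Z(\calf)$. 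So your step (1) ("$U\le Z(\calf)$"), and with it the plan of localizing the splitting inside the abelian group $Z(S)$ via Lemma \reff{l:End(S)-2} and treating $U$ as a central subgroup, breaks down exactly in the case the proposition is most needed for (projections onto direct factors, which is how it is used in Lemma \reff{l:normal.end.} and Theorem \reff{t:KRS}). Your steps (3)--(5) inherit the problem: with $U$ not central, the commutation of $\calf|_{\le T}$ and $\calf|_{\le U}$ and the strong closure of $T$ cannot be obtained the way you propose.

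The paper's proof avoids this by never assuming the contracting part is central. It first uses the artinian property to stabilize the images, setting $T_+=\bigcap_i f^i(S)=f^n(S)$ and $U=\bigcup_i\Ker(f^i)$; since $f^n$ is again normal (Lemma \reff{l:normal.end.1}(a)), Proposition \reff{p:normal.end.2} gives that $T_+$ is strongly closed and, writing $f^n+\psi_n=\Id_\calf$, that $\calf|_{\le U}\le\Im(\psi_n)$ commutes with $\cale_+=\calf|_{\le T_+}$. Only the intersection $Z=T_+\cap U$ is shown to be central. If $Z=1$ one concludes directly with Lemma \reff{l:F=ExD}; if $Z\ne1$, the point is that $f|_{T_+}$ \emph{is} surjective and normal on $\cale_+$, so Lemma \reff{l:[f,S]<Z(F)}(a) applies legitimately there, and one then applies Lemma \reff{l:End(S)-2} not to $Z(S)$ but to the abelian quotient $T_+/\foc(\cale_+)$ to split off a complement $T\ge\foc(\cale_+)$ with $f|_T\in\Aut(T)$ (strong closure of $T$ then comes from Lemma \reff{l:Z+foc}(b) inside $\cale_+$ and strong closure of $T_+$ in $\calf$). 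Your treatment of the final sentence (the surjective case: $U$ connected and central, hence trivial when $Z(\calf)$ is finite) is essentially correct and matches the paper, but the main existence and uniqueness argument needs to be rebuilt along these lines rather than around the false inclusion $U\le Z(\calf)$.
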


\begin{proof} Since $S$ is artinian (Lemma \reff{l:artinian}) and 
$\{f^i(S)\}_{i=1}^\infty$ is a descending sequence of subgroups of $S$, 
there is $n\ge1$ such that $f^i(S)=f^n(S)$ for all $i\ge n$. 
Consider the following subgroups of $S$ and fusion subsystems of $\calf$:
	\[ T_+=\bigcap\nolimits_{i=1}^\infty f^i(S)=f^n(S),\qquad 
	\cale_+=\calf|_{\le T_+}, \qquad 
	U=\bigcup\nolimits_{i=1}^\infty\Ker(f^i), \qquad \cald=\calf|_{\le 
	U}. \]

We first check that 
	\beqq S=T_+\Ker(f^n)=T_+U;
	\quad \textup{$T_+$, $U$ strongly closed in $\calf$;} 
	\quad \textup{$\cale_+$, $\cald$ commute in $\calf$.} 
	\label{e:tu=s} \eeqq
For each $x\in S$, since $f^{2n}(S)=f^n(S)$, there is $y\in T_+$ such that 
$f^n(y)=f^n(x)$ and $x=y(y^{-1}x)\in T_+\Ker(f^n)$. Thus 
$S=T_+\Ker(f^n)=T_+U$. Also, $T_+$ and $U$ are strongly closed in $\calf$ 
by Proposition \reff{p:normal.end.2}(b) and Lemma \reff{l:inj+surj=iso}(a), 
respectively. 

Let $\psi_n\in\End(\calf)$ be such that $f^n+\psi_n=\Id_\calf$. Then 
$\Im(f^n)$ and $\Im(\psi_n)$ commute in $\calf$, where $\Im(f^n)=\cale_+$ 
and $\Im(\psi_n)=\calf|_{\le\psi_n(S)}$ by Proposition 
\reff{p:normal.end.2}(d). Also, $U\le\psi_n(S)$, since for 
$u\in U$, $f^{(m+1)n}(u)=1$ for some $m\ge1$, and hence 
	\[ u = (uf^n(u)^{-1})(f^n(u)f^{2n}(u)^{-1})\cdots f^{mn}(u) = 
	\psi_n(uf^n(u) \cdots f^{mn}(u))
	\in \psi_n(S). \]
Thus $\cald\le\calf|_{\psi_n(S)}=\Im(\psi_n)$ commutes with $\cale_+$, 
finishing the proof of \eqreff{e:tu=s}.

\noindent\boldd{If $S$ is finite,} then set $T=T_+=f^n(S)$ and 
$\cale=\cale_+$. Since $|S|=|f^i(S)|\cdot|\Ker(f^i)|$ for all $i$, we have 
$U=\Ker(f^n)=\Ker(f^i)$ for all $i\ge n$, and $|S|=|T|\cdot|U|$. By 
\eqreff{e:tu=s}, $S=TU$, $[T,U]=1$, $T$ and $U$ are strongly closed, and 
$\cale$ and $\cald$ commute, so $S=T\times U$, and 
$\calf=\cale\times\cald$ by Lemma \reff{l:F=ExD}. By construction, 
$f|_T\in\Aut(\cale)$ and $f|_U$ is nilpotent. This proves 
the existence statement in the finite case, and uniqueness will be shown 
below. 

\smallskip

\noindent\boldd{In the general case,} set $Z=T_+\cap U$. Since $f(T_+)=T_+$ 
and $f(U)\le U$, we have $f(Z)\le Z$. Also, $Z\le f^n(S)\cap\psi_n(S)\le 
Z(\calf)$ by Proposition \reff{p:normal.end.2}(a). For $z\in Z$, $z=f(t)$ 
for some $t\in T_+$, $f^i(z)=f^{i+1}(t)=1$ for some $i$, and hence $t\in 
T_+\cap U=Z$. Thus $f|_Z\in\End(Z)$ is surjective and locally nilpotent. So 
$Z$ is connected by Lemma \reff{l:End(S)-2}. If $f$ is surjective, then 
$T_+=S$, and so $U=Z$ is connected. If $|Z(\calf)|<\infty$, then $Z=1$. 

\smallskip

\noindent\textbf{Case 1: } If $Z=1$ (in particular, if 
$|Z(\calf)|<\infty$), then set $T=T_+$ and $\cale=\cale_+$. By 
\eqreff{e:tu=s} together with Lemma \reff{l:F=ExD}, we have $S=T\times U$, 
$U=\Ker(f^n)$, and $\calf=\cale\times\cald$. Thus $f|_U$ is nilpotent. 
Also, $\Ker(f|_{T_+})=1$, so $f|_{T_+}\in\Aut(\cale_+)$ by Lemma 
\reff{l:inj+surj=iso}(b). 


\smallskip

\noindent\textbf{Case 2: } If $Z\ne1$, set $f_0=f|_{T_+}\in\End(\cale_+)$. 
Then $f_0$ is surjective by definition of $T_+$ and $\cale_+$. Since there 
is $\chi\in\End(\calf)$ such that $f+\chi=\Id_\calf$, we have 
$f_0+\chi|_{T_+}=\Id_{\cale_+}$ where $\chi(T_+)\le T_+$, and so 
$f_0\in\End^N(\cale_+)$. 

By Lemma \reff{l:[f,S]<Z(F)}(a) and since $f_0$ is normal and surjective, 
$\Ker(f_0^i)\le[f_0^i,T_+]\le Z(\cale_+)$ for each $i$ and 
$f|_{\foc(\cale_+)}=\Id$. Hence $Z\le Z(\cale_+)$ and 
$Z\cap\foc(\cale_+)=1$. 

Set $\4{S}=T_+/\foc(\cale_+)$, and let $\4{f}\in\End(\4{S})$ be the 
endomorphism induced by $f_0$. Then $\4{S}$ is abelian and $\4{f}$ is 
surjective, so by Lemma \reff{l:End(S)-2}, there is a unique factorization 
$\4{S}=\4T\times\4Z$ such that $\4{f}|_{\4T}\in\Aut(\4T)$ and 
$\4{f}|_{\4Z}\in\End(\4Z)$ is locally nilpotent. 

Let $T\le T_+$ be such that $T\ge\foc(\cale_+)$ and $T/\foc(\cale_+)=\4T$. 
Then $f|_T\in\Aut(T)$ since $f_0=f|_{T_+}$ induces the identity on 
$\foc(\cale_+)$ and an automorphism of the quotient. Hence 
$T\cap\Ker(f^i)=1$ for all $i$, so $T\cap U=1$. For each $x\in T_+$, since 
$\4{S}=\4T\4Z$, there are $x_1,x_2\in T_+$ such that $x=x_1x_2$, $x_1\in 
T$, and $f_0^i(x_2)=y\in\foc(\cale_+)$ for some $i\ge1$. Since 
$f|_{\foc(\cale_+)}=\Id$, we have $y^{-1}x_2\in\Ker(f_0^i)\le Z$, and so 
$x=(x_1y)(y^{-1}x_2)\in TZ$. Thus $T_+=TZ$, so $S=T_+U=TZU=TU$ since $Z\le 
U$, and $S=T\times U$ since $[T,U]\le[T_+,U]=1$.

Set $\cale=\calf|_{\le T}$. By Lemma \reff{l:Z+foc}(b), $T$ is strongly 
closed in $\cale_+$ since it contains $\foc(\cale_+)$. Since 
$\cale_+=\calf|_{\le T_+}$ where $T_+$ is strongly closed in $\calf$, this 
implies that $T$ is strongly closed in $\calf$. We already saw that $U$ is 
strongly closed in $\calf$, and $\cale$ commutes with $\cald$ in $\calf$ 
since $\cale_+$ commutes with $\cald$. So $\calf=\cale\times\cald$ by Lemma 
\reff{l:F=ExD}. Also, $\Ker(f|_T)=1$, so $f|_T\in\Aut(\cale)$ by Lemma 
\reff{l:inj+surj=iso}(b), finishing the proof of existence of such a 
factorization. 

\smallskip

\noindent\textbf{Uniqueness: } \new{Assume} $\calf=\cale^*\times\cald^*$ 
over $S=T^*\times U^*$ is a second factorization, \new{where 
$f|_{T^*}\in\Aut^N(\cale^*)$, and $f|_{U^*}\in\End^N(\cald^*)$ and is locally 
nilpotent. Then} $U^*=\bigcup_{i=1}^\infty\Ker(f^i)=U$, and 
$T^*\le\bigcap_{i=1}^\infty f^i(S)=T_+$. Also, $\foc(\cale_+)\le T^*$ since 
$f|_{\foc(\cale_+)}=\Id$. So $T_+/\foc(\cale_+)=(T^*/\foc(\cale_+))\times 
V$ where $V$ is the image of $(U^*\cap T_+)$ in the quotient, $f$ induces 
an isomorphism on the first factor, and induces a locally nilpotent 
endomorphism on the second factor. By the uniqueness statement in Lemma 
\reff{l:End(S)-2}, applied again with $T_+/\foc(\cale_+)$ in the role of 
$S$, we have $T^*/\foc(\cale_+)=\4T=T/\foc(\cale_+)$ and hence $T^*=T$. 
Also, $\cale^*=\calf|_{\le T^*}=\cale$ and $\cald^*=\calf|_{\le 
U^*}=\cald$, and so the factorization is unique. 
\end{proof}

\nnewpage

\section{A Krull-Remak-Schmidt theorem for fusion systems} 

By analogy with the Krull-Remak-Schmidt theorem for groups, we look at a 
slightly more general version of Theorem \reff{ThA} where we assume all 
direct factors are invariant under a given group $\Omega$ of automorphisms. 
If one ignores the next paragraph, and takes $\Omega=1$ in Lemma 
\reff{l:normal.end.} and Theorem \reff{t:KRS}, then one gets Theorem 
\reff{ThA} as stated in the introduction.

Let $\calf$ be a saturated fusion system over a discrete $p$-toral group 
$S$, and let $\Omega\le\Aut(\calf)$ be a group of automorphisms. We say that 
\begin{itemize} 

\item a fusion subsystem $\cale\le\calf$ is \emph{$\Omega$-invariant} if 
$\5\omega(\cale)=\cale$ for each $\omega\in\Omega$; 

\item an $\Omega$-invariant subsystem $\cale\le\calf$ is 
\emph{$\Omega$-indecomposable} if there are no proper $\Omega$-invariant 
subsystems $\cale_1,\cale_2<\cale$ such that $\cale=\cale_1\times\cale_2$; 
and 

\item an endomorphism of $\calf$ is \emph{$\Omega$-normal} if it is normal 
and commutes in $\End(\calf)$ with all $\omega\in\Omega$. 

\end{itemize}
\noindent The sets of all $\Omega$-normal endomorphisms and 
$\Omega$-normal automorphisms of $\calf$ form a monoid and a group, 
respectively, which we denote 
	\[ \End^\Omega(\calf)=C_{\End^N(\calf)}(\Omega) 
	\qquad\textup{and}\qquad
	\Aut^\Omega(\calf) = C_{\Aut^N(\calf)}(\Omega). \]
Here, $\Omega$ acts on $\End^N(\calf)$ by conjugation. Note that sums of 
$\Omega$-normal endomorphisms are $\Omega$-normal when the the hypotheses 
of Lemma \reff{l:normal.end.1}(b) hold.

\begin{Lem} \label{l:normal.end.}
Let $\calf$ be a saturated fusion system over a discrete $p$-toral group 
$S$. Fix a subgroup $\Omega\le\Aut(\calf)$, and assume that $\calf$ is 
$\Omega$-indecomposable. Then 
\begin{enuma} 

\item each $\Omega$-normal endomorphism $f\in\End^\Omega(\calf)$ is either 
nilpotent or an isomorphism, or possibly (if $Z(\calf)$ is infinite) 
locally nilpotent; and 

\item if $f_1,\dots,f_k\in\End^\Omega(\calf)$ are summable, and 
$f_1+\dots+f_k\in\Aut(\calf)$, then $f_i$ is an automorphism for some 
$i=1,\dots,k$.

\end{enuma}
\end{Lem}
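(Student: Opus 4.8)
For part (a), the plan is to feed $f$ into the canonical decomposition of Proposition~\ref{p:F=ExD} and observe that $\Omega$-invariance is automatic. Given $f\in\End^\Omega(\calf)$, Proposition~\ref{p:F=ExD} produces the \emph{unique} pair of saturated subsystems $\cale,\cald\le\calf$, over $T,U\le S$, with $\calf=\cale\times\cald$, $f|_T\in\Aut^N(\cale)$, and $f|_U\in\End^N(\cald)$ locally nilpotent (and nilpotent if $Z(\calf)$ is finite). For each $\omega\in\Omega$, applying the automorphism $\5\omega$ carries this to a decomposition $\calf=\5\omega(\cale)\times\5\omega(\cald)$ over $S=\omega(T)\times\omega(U)$ of exactly the same type: it is again a product (by Lemma~\ref{l:Z(F)xF}(b) applied to $\omega$, together with $\omega(T)\cap\omega(U)=1$), and since $f$ commutes with $\omega$ the restrictions $f|_{\omega(T)}$ and $f|_{\omega(U)}$ are the $\5\omega$-conjugates of $f|_T$ and $f|_U$, hence an automorphism of $\5\omega(\cale)$ and a locally nilpotent endomorphism of $\5\omega(\cald)$ respectively (using Lemma~\ref{l:(f1+f2)(f3+f4)} to move sums across $\5\omega$). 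By the uniqueness in Proposition~\ref{p:F=ExD}, $\5\omega(\cale)=\cale$ and $\5\omega(\cald)=\cald$. So $\cale$ and $\cald$ are $\Omega$-invariant, and since $\calf$ is $\Omega$-indecomposable one of them must equal $\calf$. If $\cale=\calf$ then $T=S$ and $f=f|_S\in\Aut^N(\calf)$; if $\cald=\calf$ then $T=1$ and $f=f|_S\in\End^N(\calf)$ is locally nilpotent, and is nilpotent when $Z(\calf)$ is finite. This is precisely (a).

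For part (b), I would argue by induction on $k$, reducing to (a) by composing with $g^{-1}$, where $g=f_1+\dots+f_k\in\Aut(\calf)$. By distributivity (Lemma~\ref{l:(f1+f2)(f3+f4)}), $\Id_\calf=g^{-1}\circ g=g^{-1}f_1+\dots+g^{-1}f_k$, so in particular $g^{-1}f_1,\dots,g^{-1}f_k$ are summable. Put $a=g^{-1}f_1$ and $b=g^{-1}f_2+\dots+g^{-1}f_k=g^{-1}\circ(f_2+\dots+f_k)$, so that $a+b=\Id_\calf$. Then both $a$ and $b$ are normal (each is a complement of the other), and $a\in\End^\Omega(\calf)$ since $g$, hence $g^{-1}$, commutes with every $\omega\in\Omega$. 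By part (a), $a$ is either an isomorphism or (locally) nilpotent. If $a\in\Aut(\calf)$ then $f_1=g\circ a\in\Aut(\calf)$ and we are done. If instead $a$ is locally nilpotent, then $b$ is injective, since $b(x)=1$ forces $x=a(x)b(x)=a(x)$ and hence $x=a^n(x)=1$ for $n$ large; applying Proposition~\ref{p:F=ExD} to the normal endomorphism $b$, the locally nilpotent factor is trivial (an injective locally nilpotent endomorphism has trivial domain), so $b\in\Aut^N(\calf)$. Hence $f_2+\dots+f_k=g\circ b\in\Aut(\calf)$; this gives the claim directly when $k=2$, and when $k\ge3$ it follows by the inductive hypothesis applied to the $\Omega$-normal summable tuple $f_2,\dots,f_k$ (summability being inherited by subfamilies).

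The step I expect to be the main obstacle is exactly this reduction at the start of (b). Unlike for groups, a sum of normal endomorphisms of a fusion system need not be normal, so we may not assume $g$ (or $f_2+\dots+f_k$) is normal, and hence cannot directly conjugate the problem to the case $g=\Id_\calf$ as one does in \cite{Sz1}. What rescues the argument is that $a=g^{-1}f_1$ and its complement $b$ are normal for a trivial reason---they sum to $\Id_\calf$, which is the definition of normality---so normality of $g$ itself is never needed. The only other point that needs attention is the verification in (a) that $\5\omega$ sends the canonical decomposition of $f$ to one of the same type, which is where we use that $f$ commutes with $\Omega$ and invoke Lemmas~\ref{l:Z(F)xF}(b) and~\ref{l:(f1+f2)(f3+f4)}.
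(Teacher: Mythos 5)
Your proof is correct. Part (a) is essentially the paper's own argument: apply Proposition \reff{p:F=ExD} to $f$, transport the resulting decomposition by each $\5\omega$ using $f\omega=\omega f$, and conclude from the uniqueness statement and $\Omega$-indecomposability. In part (b) you and the paper share the decisive step, which is exactly the one you single out: compose with $(f_1+\dots+f_k)^{-1}$ so that the new summands add up to $\Id_\calf$ and are therefore normal (and $\Omega$-normal) for free, since one cannot assume the sum itself is normal. The endgames differ, though. The paper reduces to $k=2$ ("by induction", left terse) and argues by contradiction: if neither summand were an automorphism, both would be locally nilpotent by (a), and since they commute by Proposition \reff{p:normal.end.2}(a), $(f_1+f_2)^{n+m}(x)=1$ for suitable $n,m$, contradicting $f_1+f_2=\Id_\calf$. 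You instead show directly that the complement $b$ of the locally nilpotent summand $a$ is injective (from $a(x)b(x)=x$) and hence an automorphism via Proposition \reff{p:F=ExD} (part (a) applied to $b$ would also do), so that $f_2+\dots+f_k=g\circ b$ is an automorphism and the induction proceeds; this constructive variant has the merit of making the reduction from $k$ to $k-1$ explicit where the paper does not. The one point you should spell out is the summability of the grouped pair $(a,b)$, which is needed for $a+b=\Id_\calf$ to make sense and for $a,b$ to be complements of each other: it follows from Lemma \reff{l:Z(F)xF}(a) together with Lemma \reff{l:f1+f2} (so that $\Im(b)\le\Im(g^{-1}f_2)\cdots\Im(g^{-1}f_k)$) and the easy consequence of Lemma \reff{l:comm.subsyst.} that a subsystem of one of two commuting subsystems still commutes with the other; the paper's own terse induction relies on the same kind of grouping implicitly, so this is a gap in exposition rather than in substance.
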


\begin{proof} \textbf{(a) } By Proposition \reff{p:F=ExD}, there is a 
unique factorization $\calf=\cale\times\cald$ over $S=T\times U$ such that 
$f|_T\in\Aut(\cale)$ and $f|_U\in\End(\cald)$ is locally nilpotent. For 
each $\omega\in\Omega$, since $f$ commutes with $\omega$, we get another 
factorization $\calf=\5\omega(\cale)\times\5\omega(\cald)$ with the same 
properties. Hence $\5\omega(\cale)=\cale$ and $\5\omega(\cald)=\cald$ by 
the uniqueness of factorization, so $\cale$ and $\cald$ are 
$\Omega$-invariant. Since $\calf$ is $\Omega$-indecomposable, $\calf=\cale$ 
or $\calf=\cald$, and so $f\in\Aut(\calf)$ or $f$ is locally nilpotent. By 
Proposition \reff{p:F=ExD} again, if $f$ is locally nilpotent and 
$|Z(\calf)|<\infty$, then $f$ is nilpotent. 

\smallskip

\noindent\textbf{(b) } \new{By induction,} it suffices to prove this when 
$k=2$. Assume otherwise: let $f_1,f_2\in\End^\Omega(\calf)$ be summable, 
and such that $f_1+f_2$ is an automorphism but neither $f_1$ nor $f_2$ is 
one. We claim that this is impossible. Set $\alpha=f_1+f_2\in\Aut(\calf)$: 
then $\alpha\omega=\omega\alpha$ for all $\omega\in\Omega$ (but we do not 
assume $\alpha$ is normal). Set $f'_1=f_1\alpha^{-1}$ and 
$f'_2=f_2\alpha^{-1}$. By Lemma \reff{l:(f1+f2)(f3+f4)}, $f'_1$ and $f'_2$ 
are summable and $f'_1+f'_2=\Id_\calf$. So $f'_1$ and $f'_2$ are normal, 
and $f'_1,f'_2\in\End^\Omega(\calf)$ since the $f_i$ and $\alpha$ all 
commute with $\Omega$. Upon replacing $f_i$ by $f'_i$, we can now assume 
that $f_1+f_2=\Id_\calf$. Since neither $f_1$ nor $f_2$ is an isomorphism, 
they are both locally nilpotent by (a).

Thus for $1\ne x\in S$, there are $n,m\ge1$ such that 
$f_1^n(x)=1=f_2^m(x)$. Also, $f_1f_2=f_2f_1$ by Proposition 
\reff{p:normal.end.2}(a), so $(f_1+f_2)^{n+m}(x)=1$, which is impossible 
when $f_1+f_2=\Id_\calf$. 
\end{proof}

We are now ready to prove Theorem \reff{ThA}, in the following, stronger 
version. As noted above, Theorem \reff{ThA} (the uniqueness part) is the 
special case of Theorem \reff{t:KRS} where $\Omega=1$.

\begin{Thm} \label{t:KRS}
Let $\calf$ be a saturated fusion system over a discrete $p$-toral group 
$S$, and fix $\Omega\le\Aut(\calf)$. Let 
$\cale_1,\dots,\cale_k$ and $\cale_1^*,\dots,\cale_m^*$ be 
$\Omega$-indecomposable $\Omega$-invariant fusion subsystems of $\calf$ 
such that 
	\[ \calf = \xxx\cale = \xxx[m]{\cale^*}. \]
Then $k=m$, and there are $\alpha\in\Aut^\Omega(\calf)$ and 
$\sigma\in\Sigma_k$ such that $\alpha(\cale_i)=\cale^*_{\sigma(i)}$ for 
each $i$.
\end{Thm}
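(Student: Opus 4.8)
The plan is to follow the classical exchange-lemma argument for the Krull--Remak--Schmidt theorem, as in \cite[Theorem 2.4.8]{Sz1}, now that all the needed infrastructure (commuting subsystems, sums, normal and $\Omega$-normal endomorphisms, and the Fitting-type decomposition of Proposition \reff{p:F=ExD}) is in place. Write $\pi_i\in\End^\Omega(\calf)$ for the projection onto $\cale_i$ associated to the factorization $\calf=\xxx\cale$ (identity on $\cale_i$, trivial on the other factors), which is $\Omega$-normal by Lemma \reff{l:normal.end.1}(c) together with the fact that each $\cale_j$ is $\Omega$-invariant; similarly let $\pi^*_j$ be the projections for the starred factorization. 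These satisfy $\sum_i\pi_i=\Id_\calf=\sum_j\pi^*_j$, the idempotent relations $\pi_i\pi_i=\pi_i$ and $\pi_i\pi_j=\zero\calf$ for $i\ne j$, and all the $\pi_i,\pi^*_j$ commute with $\Omega$; moreover $\pi_i|_{\cale_i}$ and $\pi^*_j|_{\cale^*_j}$ are the relevant identities. The distributivity of composition over sums from Lemma \reff{l:(f1+f2)(f3+f4)} is what makes the bookkeeping work.

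The core step is an exchange lemma: fixing $i=1$, one looks at the summable family $\{\pi_1\pi^*_j\pi_1\}_{j=1}^m$ (summability following from Lemma \reff{l:(f1+f2)(f3+f4)} applied to the summability of the $\pi^*_j$ and the $\pi_1$'s), whose sum is $\pi_1(\sum_j\pi^*_j)\pi_1=\pi_1$, which restricts to $\Id_{\cale_1}$ on $\cale_1$. Restricting everything to $\cale_1$ and applying Lemma \reff{l:normal.end.}(b) inside the $\Omega$-indecomposable system $\cale_1$ (here one needs that $\pi_1\pi^*_j\pi_1$ restricted to $\cale_1$ is $\Omega$-normal on $\cale_1$, which follows because it arises as a composite/restriction of $\Omega$-normal endomorphisms and restricts appropriately — essentially the argument already used for $f_0$ in Proposition \reff{p:F=ExD}), we conclude that for some $j$ the map $\pi_1\pi^*_j\pi_1|_{\cale_1}$ is an automorphism of $\cale_1$. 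After renumbering the starred factors we may take $j=1$. One then shows in the standard way that $\pi^*_1|_{\cale_1}\colon\cale_1\to\cale^*_1$ and $\pi_1|_{\cale^*_1}\colon\cale^*_1\to\cale_1$ are mutually inverse isomorphisms (because $\pi_1\pi^*_1\pi_1|_{\cale_1}$ invertible forces both one-sided factors to be invertible, using Lemma \reff{l:normal.end.}(b) once more on the other side), so $\cale_1\cong\cale^*_1$, and that $\calf=\cale^*_1\times\cale_2\times\dots\times\cale_k$: the substitution is realized by the $\Omega$-normal automorphism $\alpha_1=\pi^*_1\pi_1+\pi_2+\dots+\pi_k$, which one checks (via Lemma \reff{l:(f1+f2)(f3+f4)} and the idempotent relations) is summable, equal to the identity after composing suitably, hence in $\Aut^\Omega(\calf)$ by Lemma \reff{l:[f,S]<Z(F)}(b) and Lemma \reff{l:normal.end.}, and sends $\cale_1$ to $\cale^*_1$ while fixing $\cale_2,\dots,\cale_k$.

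Having exchanged one factor, I would set up an induction on $k$. The base case $k=1$ is just the statement that $\calf$ itself is $\Omega$-indecomposable, forcing $m=1$ and $\cale^*_1=\calf=\cale_1$. For the inductive step, apply the exchange above to replace $\cale_1$ by $\cale^*_1$, giving $\calf=\cale^*_1\times(\cale_2\times\dots\times\cale_k)$ and $\calf=\cale^*_1\times(\cale^*_2\times\dots\times\cale^*_m)$; then pass to the ``complementary'' subsystem $\cald=\calf|_{\le\,T_2\cdots T_k}$ (the internal direct factor complementary to $\cale^*_1$, well defined by the argument of Lemma \reff{l:F=ExD} / Proposition \reff{p:F=ExD}), which inherits a $C_\Omega$-action and two factorizations $\cald=\cale_2\times\dots\times\cale_k=\cale^{**}_2\times\dots\times\cale^{**}_m$ where $\cale^{**}_j$ is the image of $\cale^*_j$ under the projection $\calf\to\cald$; the $\cale^{**}_j$ are again $\Omega$-indecomposable and $\Omega$-invariant by Lemma \reff{l:Z(F)xF}(b). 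By induction $k-1=m-1$ and there is an $\Omega$-normal automorphism of $\cald$ matching up the factors up to a permutation; extend it by the identity on $\cale^*_1$ to an $\Omega$-normal automorphism of $\calf$ (again Lemma \reff{l:normal.end.1}(c)-style), compose with $\alpha_1$ and with the correction automorphism relating $\cale^*_j$ to $\cale^{**}_j$, and read off the desired $\alpha\in\Aut^\Omega(\calf)$ and $\sigma\in\Sigma_k$.

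I expect the main obstacle to be the verification that the various endomorphisms assembled from the $\pi_i$ and $\pi^*_j$ (e.g. $\pi_1\pi^*_j\pi_1$ restricted to $\cale_1$, and the substitution map $\alpha_1$) are genuinely \emph{$\Omega$-normal} endomorphisms of the relevant fusion systems and that the relevant families are \emph{summable} — in the group case these are automatic, but here one must invoke Lemma \reff{l:(f1+f2)(f3+f4)}, Lemma \reff{l:normal.end.1}, Lemma \reff{l:[f,S]<Z(F)}, and Proposition \reff{p:normal.end.2}(d) carefully, and handle the fact that a restriction of a normal endomorphism to a direct factor is normal on that factor (this is implicit in Proposition \reff{p:F=ExD} but should be stated cleanly). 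A secondary subtlety, absent in the finite-group case, is the possibility of \emph{locally} nilpotent (non-nilpotent) endomorphisms in Lemma \reff{l:normal.end.}(a); but this does not actually interfere, since in the exchange step we only ever use the dichotomy ``isomorphism or not an isomorphism'' via Lemma \reff{l:normal.end.}(b), which is clean. The actual algebraic identities ($\sum\pi_i=\Id$, idempotence, the computation $\pi_1(\sum\pi^*_j)\pi_1=\pi_1$, and the inverse-pair argument) are then routine, exactly as in \cite{Sz1}.
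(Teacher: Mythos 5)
Your plan is essentially the paper's own argument (the Suzuki-style exchange carried out with the projections, using Lemmas \reff{l:(f1+f2)(f3+f4)}, \reff{l:normal.end.1} and \reff{l:normal.end.}, and a substitution automorphism built as a sum), but one step is justified by the wrong lemma and, as written, would fail. After finding $j$ with $\pi_1\pi^*_j\pi_1|_{\cale_1}\in\Aut(\cale_1)$ (say $j=1$ after renumbering), you assert that $\pi^*_1|_{\cale_1}\:\cale_1\to\cale^*_1$ and $\pi_1|_{\cale^*_1}\:\cale^*_1\to\cale_1$ are (mutually inverse) isomorphisms ``using Lemma \reff{l:normal.end.}(b) once more on the other side''. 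Applying \reff{l:normal.end.}(b) inside $\cale^*_1$ to $\Id_{\cale^*_1}=\sum_i\pi^*_1\pi_i|_{\cale^*_1}$ only yields that $\pi^*_1\pi_i|_{\cale^*_1}$ is an automorphism for \emph{some} $i$, with no control that $i=1$; and invertibility of the composite $\pi_1\pi^*_1\pi_1|_{\cale_1}$ by itself only gives injectivity of $\pi^*_1|_{\cale_1}$ and surjectivity of $\pi_1|_{\cale^*_1}$ (nor are the two maps mutually inverse in general). The paper closes exactly this point by a kernel-stabilization argument: in your notation, $\Ker\bigl((\pi^*_1\pi_1)^n|_{T^*_1}\bigr)=\Ker(\pi_1|_{T^*_1})$ for all $n\ge1$ (because $\pi_1\pi^*_1|_{T_1}$ is an automorphism and $\pi^*_1|_{T_1}$ is injective), and this kernel is proper, so $\pi^*_1\pi_1|_{\cale^*_1}$ is not locally nilpotent and is therefore an automorphism by Lemma \reff{l:normal.end.}(a) together with the $\Omega$-indecomposability of $\cale^*_1$; this is the paper's argument for $f_jg|_{T_j}$, and you should substitute it for your parenthetical.

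Apart from that, the differences are organizational. Your substitution map $\alpha_1=\pi^*_1\pi_1+\pi_2+\cdots+\pi_k$ does work: summability and normality follow from Lemma \reff{l:normal.end.1}(b) since $(\pi^*_1\pi_1)\circ(\pi_2+\cdots+\pi_k)$ is trivial, and injectivity follows by applying $\pi_1$ to a kernel element exactly as in the paper's computation of $\Ker(h_1)$ for $h_1=f_jg+g'$; but invertibility should then be concluded via Proposition \reff{p:normal.end.2}(d) and Lemma \reff{l:inj+surj=iso}(b), not Lemma \reff{l:[f,S]<Z(F)}(b), which only says the inverse of a normal automorphism is normal. Finally, you iterate by induction on $k$, passing to the complementary factor $\cald$ and projecting the remaining starred factors into it; this obliges you to verify that the projected subsystems are again $\Omega$-invariant and $\Omega$-indecomposable, that $\cald$ is saturated, and that an $\Omega$-normal automorphism of $\cald$ extends by the identity on $\cale^*_1$ to an $\Omega$-normal automorphism of $\calf$. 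The paper avoids all of this by repeating the one-factor exchange inside $\calf$ itself, using the new factorization $\calf=\cale_j\times\cale^*_2\times\cdots\times\cale^*_m$ produced at each step, and composing the resulting automorphisms $h_1,\dots,h_m$; your route can be made to work, but it adds verifications the paper's direct iteration does not need.
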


\begin{proof} Set $\cale^*=\cale_1^*$ and 
$\cald=\cale_2^*\times\cdots\times\cale_m^*$; thus 
$\calf=\cale^*\times\cald$. Let $T_1,\dots,T_k,T^*,U\le S$ be such that 
$\cale_i$, $\cale^*$, and $\cald$ are fusion subsystems over $T_i$, $T^*$, 
and $U$, respectively. 

Let $f_1,\dots,f_k,g,g'\in\End^\Omega(\calf)$ 
be the projections to $\cale_1,\dots,\cale_k,\cale^*,\cald$, respectively. 
Thus $f_1+\dots+f_k=\Id_\calf=g+g'$. Also, 
	\[ g|_{T^*} = g\circ(f_1+\dots+f_k)|_{T^*} = 
	gf_1|_{T^*}+\dots+gf_k|_{T^*}  \]
by Lemma \reff{l:(f1+f2)(f3+f4)}, and we regard this as a sum of endomorphisms of 
$\cale^*$. Since $g|_{T^*}$ is an automorphism, at least one of the 
summands is an automorphism by Lemma \reff{l:normal.end.}(b) (applied with 
$\{\omega|_{T^*}\,|\,\omega\in\Omega\}\le\Aut(\cale^*)$ in the place of 
$\Omega$). 

Let $j$ be such that $gf_j|_{T^*}\in\Aut(\cale^*)$. Thus 
$f_j|_{T^*}$ is an injective morphism from $\cale^*$ to $\cale_j$, and 
$g$ restricts to a surjection from $\gen{\5f_j(\cale^*)}$ onto $\cale^*$. 

Consider $f_jg|_{T_j}\in\End(\cale_j)$. For each $n\ge1$, $(f_jg)^n|_{T_j} 
= (f_j|_{T^*})((gf_j)^{n-1}|_{T^*})(g|_{T_j})$, and since 
$gf_j|_{T^*}\in\Aut(\cale^*)$ and $f_j|_{T^*}$ is injective, we see that 
$\Ker((f_jg)^n|_{T_j})=\Ker(g|_{T_j})$. Since $g|_{T_j}$ is nontrivial, 
this shows that $f_jg|_{T_j}$ is not locally nilpotent, and hence is an 
automorphism of $\cale_j$ by Lemma \reff{l:normal.end.}(a). Then 
$g|_{T_j}\in\Iso(\cale_j,\cale^*)$ and 
$f_j|_{T^*}\in\Iso(\cale^*,\cale_j)$. 

Now, $f_jg$ and $g'$ are both $\Omega$-normal endomorphisms of $\calf$, and 
$(f_jg)g'=\zero{\calf}$ since $gg'=\zero\calf$. So by Lemma 
\reff{l:normal.end.1}(b), $f_jg$ and $g'$ are summable and $f_jg+g'$ is 
$\Omega$-normal. Set $h_1=f_jg+g'\in\End^\Omega(\calf)$. Then 
$h_1|_{T^*}=f_jg|_{T^*}=f_j|_{T^*}$ sends $\cale^*$ isomorphically to 
$\cale_j$, while $h_1|_U=\Id_\cald$. 

Assume $x\in\Ker(h_1)\le S$. Then $1=gh_1(x)=(gf_jg+gg')(x)=gf_j(g(x))$, 
and $g(x)=1$ since $g(S)=T^*$ and $gf_j|_{T^*}$ is an automorphism. Hence 
$1=h_1(x)=(f_jg+g')(x)=g'(x)$, so $x=(g+g')(x)=g(x)g'(x)=1$. This proves 
that $\Ker(h_1)=1$, and hence (since $|h_1(S)|=|S|$) that $h_1(S)=S$. Then 
$\Im(h_1)=\calf|_{\le S}=\calf$ by Proposition \reff{p:normal.end.2}(d), so 
$h_1\in\Aut^\Omega(\calf)$ by Lemma \reff{l:inj+surj=iso}(b).

We have now constructed $h_1\in\Aut^\Omega(\calf)$ that sends 
$\cale^*=\cale_1^*$ isomorphically to $\cale_j$ \new{(where $j$ is as 
above)}, and is the identity on $\cale_i^*$ for $2\le i\le m$. In 
particular, $\calf=\cale_j\times\cale_2^*\times\dots\times\cale_m^*$. Upon 
repeating this construction, but with $\cale^*=\cale_2^*$ and 
$\cald=\cale_j\times\cale_3^*\times\dots\times\cale_m^*$, we obtain 
$h_2\in\Aut^\Omega(\calf)$ that sends $\cale_2^*$ isomorphically to 
$\cale_{j_2}$ \new{for some $j_2\in\{1,\dots,k\}$,} and $\cald$ 
to itself via the identity. \new{Also, $j_2\ne j$ since $h_2$ is injective 
($\cale_{j_2}=h_2(\cale_2^*)\ne h_2(\cale_j)=\cale_j$).}

Upon continuing this process, we obtain $\Omega$-normal automorphisms 
$h_1,\dots,h_m$ of $\calf$ such that for each $i=1,\dots,m$, 
$h=h_m\circ\cdots\circ h_1$ sends $\cale_i^*$ isomorphically to 
$\cale_{j_i}$ for some $j_i\in\{1,\dots,k\}$. The $j_i$ are distinct since 
$h$ is injective, and $\{j_1,\dots,j_m\}=\{1,\dots,k\}$ since $h$ is 
an isomorphism. So $m=k$, and $h\in\Aut^\Omega(\calf)$ sends each $\cale_i^*$ to 
some $\cale_j$. 
\end{proof}

The first corollary is a special case of Theorem \ref{t:KRS}.

\begin{Cor} \label{c:KRS}
If $\calf$ is a saturated fusion system over a discrete $p$-toral group 
$S$ such that either $Z(\calf)=1$ or $\foc(\calf)=S$, then $\calf$ factors 
as a product of indecomposable fusion subsystems in a unique way. Thus 
$\calf$ is the direct product of all of its indecomposable direct 
factors. 
\end{Cor}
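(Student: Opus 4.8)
The plan is to deduce this directly from Theorem \reff{t:KRS} in the case $\Omega=1$, the essential point being that under either hypothesis the only normal automorphism of $\calf$ is the identity. Existence of a factorization into indecomposable subsystems holds in complete generality by Proposition \reff{p:exists.fact.}, so only uniqueness needs proof.

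First I would take two factorizations $\calf=\xxx\cale=\cale_1^*\times\cdots\times\cale_m^*$ into indecomposables and apply Theorem \reff{t:KRS} with $\Omega=1$: this produces a normal automorphism $\alpha\in\Aut^N(\calf)$ and a permutation $\sigma\in\Sigma_k$ with $\alpha(\cale_i)=\cale^*_{\sigma(i)}$ for all $i$, and in particular $k=m$. It therefore suffices to show $\alpha=\Id_\calf$, since then $\cale_i=\cale^*_{\sigma(i)}$ and the two factorizations agree up to reordering.

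To see that $\Aut^N(\calf)=1$, note that $\alpha$, being invertible, is surjective, so by Lemma \reff{l:[f,S]<Z(F)}(a) it satisfies $[\alpha,S]\le Z(\calf)$ and $\alpha|_{\foc(\calf)}=\Id$. If $Z(\calf)=1$, then $[\alpha,S]=1$, which forces $\alpha(x)=x$ for every $x\in S$; if instead $\foc(\calf)=S$, then $\alpha|_S=\alpha|_{\foc(\calf)}=\Id$. In either case $\alpha$ is the identity homomorphism of $S$, and since the functor part of a morphism of fusion systems is uniquely determined by its effect on $S$ (see the remarks following Definition \reff{d:Mor(E,F)}), we get $\alpha=\Id_\calf$. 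This proves that the factorization is unique up to the ordering of the factors.

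For the final assertion, let $\cale\le\calf$ be any indecomposable direct factor, say $\calf=\cale\times\cale'$; factoring $\cale'$ into indecomposables by Proposition \reff{p:exists.fact.} gives a factorization of $\calf$ into indecomposables in which $\cale$ occurs, so by the uniqueness just proved $\cale$ is one of the $\cale_i$. Hence $\cale_1,\dots,\cale_k$ are precisely the indecomposable direct factors of $\calf$, and $\calf=\xxx\cale$ exhibits $\calf$ as their product. This corollary presents no real obstacle once Theorem \reff{t:KRS} is available; the only point requiring a little care is observing that vanishing of $\alpha$ on $S$ already entails $\alpha=\Id_\calf$, and that "unique factorization" must be read through the permutation $\sigma$ of Theorem \reff{t:KRS}.
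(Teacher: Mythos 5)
Your proposal is correct and follows the same route as the paper: the paper's proof is exactly that $\Aut^N(\calf)=\{\Id_\calf\}$ under either hypothesis by Lemma \reff{l:[f,S]<Z(F)}, after which Theorem \reff{t:KRS} with $\Omega=1$ gives the conclusion. Your extra details (that an automorphism trivial on $S$ is the identity morphism of $\calf$, and the closing remark about all indecomposable direct factors) are just explicit versions of what the paper leaves implicit.
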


\begin{proof} If $Z(\calf)=1$ or $\foc(\calf)=S$, then 
$\Aut^N(\calf)=\{\Id_\calf\}$ by Lemma \reff{l:[f,S]<Z(F)}. So the result 
follows immediately from Theorem \reff{t:KRS} (applied with $\Omega=1$).
\end{proof}

Our original interest in this problem arose from trying to describe 
automorphism groups of fusion systems in terms of those of their 
indecomposable factors. Our last corollary is a very simple application of 
this type. (Compare it with Propositions 3.4 and 3.8 in \cite{BMOR}.)

\begin{Cor} \label{c:Aut(F1xF2)} 
Let $\calf$ be a saturated fusion system over a discrete $p$-toral group 
$S$ such that either $Z(\calf)=1$ or $\foc(\calf)=S$. Assume 
$\calf=\xxx\cale$ where each $\cale_i$ is an indecomposable 
fusion subsystem over $T_i\nsg S$, and set 
	\[ \Aut^0(\calf) = 
	\bigl\{ \alpha\in\Aut(\calf) \,\big|\, \alpha(T_i)=T_i ~\textup{for 
	each $1\le i\le k$} \bigr\}. \]
Let $\Gamma\le\Sigma_k$ be the subgroup of all permutations 
$\sigma$ such that $\cale_{\sigma(i)}\cong\cale_i$ for each $i$. Then 
$\Aut^0(\calf)\cong\prod_{i=1}^k\Aut(\cale_i)$ and is normal in 
$\Aut(\calf)$, and there is a subgroup $K\le\Aut(\calf)$ such that 
$K\cong\Gamma\cong\Aut(\calf)/\Aut^0(\calf)$ and 
$\Aut(\calf)=\Aut^0(\calf)\rtimes K$.
\end{Cor}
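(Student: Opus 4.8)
The plan is to record how an automorphism permutes the indecomposable direct factors of $\calf$, identify the kernel of that action with $\Aut^0(\calf)$ and its image with $\Gamma$, and then split the resulting surjection by a coherent choice of isomorphisms between the $\cong$-equivalent factors. The one real input is the uniqueness of the factorization; the rest is bookkeeping with product fusion systems.

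\textbf{Step 1 (automorphisms permute the factors).} Since $Z(\calf)=1$ or $\foc(\calf)=S$, Corollary~\ref{c:KRS} applies, so $\calf=\cale_1\times\dots\times\cale_k$ is the unique factorization of $\calf$ into indecomposables and $\{\cale_1,\dots,\cale_k\}$ is exactly the set of indecomposable direct factors of $\calf$. (Discarding any trivial factor, we may assume the $T_i$ are pairwise distinct as subgroups of $S$, which changes nothing.) For $\alpha\in\Aut(\calf)$, applying $\widehat\alpha$ to the decomposition shows that each $\widehat\alpha(\cale_i)$ is again an indecomposable direct factor, hence $\widehat\alpha(\cale_i)=\cale_{\pi_\alpha(i)}$ for a unique $\pi_\alpha\in\Sigma_k$; since $\widehat\alpha$ restricts to an isomorphism of $\cale_i$ onto $\cale_{\pi_\alpha(i)}$ we have $\pi_\alpha\in\Gamma$, and on underlying groups $\alpha(T_i)=T_{\pi_\alpha(i)}$. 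The assignment $\alpha\mapsto\pi_\alpha$ is a homomorphism $\Aut(\calf)\to\Gamma$, and its kernel is precisely $\Aut^0(\calf)$: if $\alpha(T_i)=T_i$ for all $i$ then $\widehat\alpha(\cale_i)$ is a direct factor over $T_i$, hence equals $\cale_i$ by uniqueness, so $\pi_\alpha=\Id$. In particular $\Aut^0(\calf)\nsg\Aut(\calf)$.

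\textbf{Step 2 (the kernel).} For $\beta_i\in\Aut(\cale_i)$, the product $\beta_1\times\dots\times\beta_k$ is an automorphism of $\calf=\cale_1\times\dots\times\cale_k$ lying in $\Aut^0(\calf)$, its associated functor being built coordinatewise via Definition~\ref{d:F1xF2}; this gives an injective homomorphism $\prod_{i=1}^k\Aut(\cale_i)\to\Aut^0(\calf)$. Conversely, if $\alpha\in\Aut^0(\calf)$ then $\widehat\alpha(\cale_i)=\cale_i$, so $\widehat\alpha$ restricts to some $\alpha_i\in\Aut(\cale_i)$; since $\alpha|_S\in\Aut(S)$ fixes each factor $T_i$ it equals $\alpha|_{T_1}\times\dots\times\alpha|_{T_k}$, and therefore $\widehat\alpha=\widehat{\alpha_1}\times\dots\times\widehat{\alpha_k}$ by uniqueness of the functor associated to a group homomorphism (see the remark after Definition~\ref{d:Mor(E,F)}). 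Hence $\Aut^0(\calf)\cong\prod_{i=1}^k\Aut(\cale_i)$.

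\textbf{Step 3 (the splitting).} For each $\cong$-class among the $\cale_i$ fix a representative $r$ and, for each index $i$ in that class, a fixed isomorphism $\delta_i\colon\cale_r\to\cale_i$ with $\delta_r=\Id$. Every $\sigma\in\Gamma$ preserves each $\cong$-class, so there is an automorphism $\alpha_\sigma\in\Aut(\calf)$ whose restriction to the factor $\cale_i$ is the isomorphism $\delta_{\sigma(i)}\circ\delta_i^{-1}\colon\cale_i\to\cale_{\sigma(i)}$; on underlying groups $\alpha_\sigma$ is the corresponding permute-and-map automorphism of $S=T_1\times\dots\times T_k$, with associated functor assembled from the $\widehat{\delta_{\sigma(i)}\delta_i^{-1}}$ and the coordinate permutation exactly as in Step~2 (well-definedness and functoriality being routine via Definition~\ref{d:F1xF2}). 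The $\delta$'s telescope, so $\alpha_\sigma\alpha_\tau=\alpha_{\sigma\tau}$, and $\pi_{\alpha_\sigma}=\sigma$. Thus $K=\{\alpha_\sigma\mid\sigma\in\Gamma\}\le\Aut(\calf)$ is a subgroup with $K\cong\Gamma$ and $K\cap\Aut^0(\calf)=1$; since $\alpha\mapsto\pi_\alpha$ is onto $\Gamma$ with kernel $\Aut^0(\calf)$, we get $\Aut(\calf)=\Aut^0(\calf)\rtimes K$ and $K\cong\Gamma\cong\Aut(\calf)/\Aut^0(\calf)$.

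The only substantive point is Step~1, which rests entirely on the uniqueness of the indecomposable factorization (Corollary~\ref{c:KRS}). Within the remaining bookkeeping, the part needing the most care is checking that the permute-and-map maps on $S$ really underlie automorphisms of the fusion system $\calf$ and that the coherent choice of the $\delta_i$ turns $\sigma\mapsto\alpha_\sigma$ into a genuine homomorphism rather than a mere set-theoretic section; both are handled, as in Step~2, by Definition~\ref{d:F1xF2} together with the uniqueness of associated functors.
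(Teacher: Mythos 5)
Your proposal is correct and follows essentially the same route as the paper: use Corollary \ref{c:KRS} to show every automorphism permutes the indecomposable factors, identify the kernel of the resulting map to $\Gamma$ with $\Aut^0(\calf)\cong\prod_{i=1}^k\Aut(\cale_i)$, and split it by a coherent choice of isomorphisms between isomorphic factors (your $\delta_{\sigma(i)}\delta_i^{-1}$ is exactly the paper's compatible system $\beta_{i,j}$). The extra detail you supply in Steps 2 and 3 fills in what the paper treats as routine, so there is nothing further to change.
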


\begin{proof} The isomorphism 
$\Aut^0(\calf)\cong\prod_{i=1}^k\Aut(\cale_i)$ is clear. By Corollary 
\reff{c:KRS}, for each $\alpha\in\Aut(\calf)$, there is $\sigma\in\Sigma_k$ 
such that $\alpha(T_i)=T_{\sigma(i)}$ for each $i$, and $\sigma\in\Gamma$ 
since $\cale_{\sigma(i)}=\5\alpha(\cale_i)\cong \cale_i$ for each $i$. This 
defines a homomorphism $\rho\:\Aut(\calf)\too\Gamma$ with kernel 
$\Aut^0(\calf)$. 

To see that $\Aut(\calf)$ is a semidirect product and $\rho$ is surjective, 
choose isomorphisms $\beta_{i,j}\in\Mor(\cale_i,\cale_j)$ for each $1\le 
i<j\le k$ such that $\cale_i\cong\cale_j$, in such a way that 
$\beta_{h,j}=\beta_{i,j}\circ\beta_{h,i}$ whenever $h<i<j$ are such that 
$\cale_h\cong\cale_i\cong\cale_{j}$. Set $\beta_{i,i}=\Id_{\cale_i}$ for 
all $i$, and let $\beta_{j,i}=\beta_{i,j}^{-1}$ whenever $\beta_{i,j}$ is 
defined. Set 
	\[ K = \bigl\{\alpha\in\Aut(\calf) \,\big|\, 
	\textup{$\forall\, i=1,\dots,k$,}~ 
	\alpha|_{\cale_i}=\beta_{i,j} ~\textup{where $j=\rho(\alpha)(i)$} 
	\bigr\}. \]
Then $\rho|_K$ is an isomorphism from $K$ to $\Gamma$, and hence 
$\Aut(\calf)=\Aut^0(\calf)\rtimes K$.
\end{proof}

\begin{New} 

We now return to the question raised in the introduction: that of how 
easily Theorem \ref{t:KRS}, after restriction to realizable fusion systems 
over finite $p$-groups (i.e., fusion systems realized by finite groups), 
can be derived from the Krull-Remak-Schmidt theorem for finite groups. The 
quick answer to this seems to be that in most cases (at least), it requires 
tools much more sophisticated than those used here to prove Theorem 
\ref{t:KRS}. 

The simplest case seems to be that when $p=2$ and $O^{2'}(\calf)=\calf$. 
Here, one can apply the following lemma, which is based on a theorem of 
Goldschmidt.

\begin{Lem} \label{l:G=prod}
Let $\calf$ be a realizable fusion system over a finite $2$-group $S$ such 
that $O^{p'}(\calf)=\calf$. Then there is a finite group $G$ such that 
$O_{2'}(G)=1$, $O^{2'}(G)=G$, $S\in\syl2G$ and $\calf=\calf_S(G)$. For each 
such $G$, if $\calf=\xxx\cale$ is a factorization of $\calf$ over 
$S=\xxx{T}$, then there are subgroups $H_1,\dots,H_k\le G$ such that 
$T_i\in\syl2{H_i}$ and $\cale_i=\calf_{T_i}(H_i)$ for each $i$, and such 
that $G=\xxx{H}$.
\end{Lem}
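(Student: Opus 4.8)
The plan is to reduce to Goldschmidt's theorem on strongly closed $2$-subgroups. First, since $\calf$ is realizable and $O^{2'}(\calf)=\calf$, take any finite group $G_0$ with $S\in\syl2{G_0}$ and $\calf=\calf_S(G_0)$; replacing $G_0$ by $G_0/O_{2'}(G_0)$ and then by $O^{2'}(G_0/O_{2'}(G_0))$ does not change the fusion system (the first quotient because $O_{2'}$ is $2$-perfect and intersects $S$ trivially, the second because $\calf_S(G)=\calf_S(O^{2'}(G))$ when $O^{2'}(\calf)=\calf$), so we may assume $O_{2'}(G)=1$ and $O^{2'}(G)=G$. Fix such a $G$. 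Now suppose $\calf=\xxx\cale$ over $S=\xxx{T}$. Each $T_i$ is strongly closed in $\calf=\calf_S(G)$, hence strongly closed in $G$ with respect to $S$.

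Next I would invoke Goldschmidt's theorem \cite[Theorem A]{Goldschmidt}: a strongly closed abelian $2$-subgroup lies in a normal subgroup on which it is Sylow — but here the $T_i$ need not be abelian, so the correct tool is the more general statement (a consequence of Goldschmidt's work, see also \cite{Flores-Foote}) that for a strongly closed $2$-subgroup $T\le S\in\syl2G$ with $O_{2'}(G)=1$ and $O^{2'}(G)=G$, there is a normal subgroup $N\nsg G$ with $T\in\syl2N$ and $N=O^{2'}(N)$; moreover one may take $N=\langle T^G\rangle$ and then $\calf_T(N)$ is determined by $\calf$ restricted to $T$ in the sense that $\calf_T(N)=\calf|_{\le T}$ (using $O^{2'}(\calf|_{\le T})=\calf|_{\le T}$, which follows from $O^{2'}(\cale_i)=\cale_i$ — itself a consequence of $O^{2'}(\calf)=\calf$ together with $\calf=\xxx\cale$). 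Set $H_i=\langle T_i^G\rangle\nsg G$. Then $T_i\in\syl2{H_i}$ and $\cale_i=\calf|_{\le T_i}=\calf_{T_i}(H_i)$.

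It remains to show $G=\xxx{H}$. Since $[T_i,T_j]=1$ for $i\ne j$ (as $\calf=\xxx\cale$ forces the $T_i$ to commute and intersect trivially) and $H_i=\langle T_i^G\rangle$, a standard argument shows $[H_i,H_j]=1$: indeed $[T_i,T_j]=1$ and both are strongly closed, so for $g\in G$ the subgroup $[T_i,{}^gT_j]$ lies in $T_i\cap{}^gT_j$-controlled fusion and one checks it is trivial using that $\calf_S(G)$ is the product — more carefully, $H_iH_j/H_j$ is a normal $2$-local quotient whose Sylow image is $T_iT_j/T_j\cong T_i$, and the product decomposition of $\calf$ gives that conjugation by $H_j$ centralizes $T_i$, whence $[H_i,H_j]\le O_{2'}(\langle H_i,H_j\rangle)$; applying this inside $G$ where $O_{2'}(G)=1$ and intersecting with the relevant Sylow forces $[H_i,H_j]=1$. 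Thus $H_1\cdots H_k$ is a central product of the $H_i$, it contains $S=\xxx{T}$, and since $O^{2'}(G)=G$ and each $H_i=O^{2'}(H_i)$ we get $G=\langle S^G\rangle\le H_1\cdots H_k$, so $G=H_1\cdots H_k$. Finally $H_i\cap\prod_{j\ne i}H_j$ is a $2$-perfect normal subgroup whose Sylow $2$-subgroup lies in $T_i\cap\prod_{j\ne i}T_j=1$, hence is trivial, so the product is direct: $G=\xxx{H}$.

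The main obstacle I anticipate is the second paragraph: extracting from Goldschmidt's theorem (and its extensions to non-abelian strongly closed $2$-subgroups) precisely the statement that $\langle T_i^G\rangle$ is $2$-perfect with $T_i$ Sylow \emph{and} realizes the fusion subsystem $\cale_i$ — the literature states the group-theoretic conclusion, and translating it into the fusion-theoretic identity $\cale_i=\calf_{T_i}(H_i)$ requires the observation that $\calf|_{\le T_i}$ is already saturated (Lemma \ref{l:S=TCS(T)}, since $S=T_iC_S(T_i)$ as $\calf$ is a product) and $2$-perfect, and that a saturated $2$-perfect fusion subsystem on a strongly closed subgroup is rigid enough to be pinned down by the ambient group. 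The commuting argument for $[H_i,H_j]=1$ is routine once one has the realization, but must be written carefully to avoid circularity with the product structure.
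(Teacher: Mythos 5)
Your opening reduction (pass to $G_0/O_{2'}(G_0)$ and then to its $2'$-residual) agrees with the paper, and taking $H_i=\gen{T_i^G}$ is also the paper's choice; but the central step of your argument rests on a tool that is misquoted and, as you state it, false. Goldschmidt's Theorem A is not a statement that a strongly closed $2$-subgroup is Sylow in some normal ($2$-perfect) subgroup; what \cite[Theorem A]{Goldschmidt} gives in the situation at hand (two strongly closed subgroups $T_i,T_j$ with $[T_i,T_j]=1$ and $T_i\cap T_j=1$) is that $[\gen{T_i^G},\gen{T_j^G}]\le O_{2'}(G)$, so the normal closures commute once $O_{2'}(G)=1$. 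Your proposed ``more general statement'' --- that for any strongly closed $T\le S\in\syl2G$ with $O_{2'}(G)=1$ and $O^{2'}(G)=G$ one has $T\in\syl2{\gen{T^G}}$ and $\calf_T(\gen{T^G})=\calf|_{\le T}$ --- fails already for $G=\Sz(q)$ (or $\PSU_3(2^n)$) with $T=Z(S)$: there $T$ is strongly closed, $\calf|_{\le T}$ is saturated (Lemma \ref{l:S=TCS(T)}) and equals its own $2'$-residual, yet $\gen{T^G}=G$ and $T\notin\syl2G$. In the correct argument the facts $T_i\in\syl2{H_i}$ and $\cale_i=\calf_{T_i}(H_i)$ are read off \emph{after} one knows $G=\xxx{H}$, not taken as input.

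The second gap is your claim that $[H_i,H_j]=1$ follows by a ``standard'' or ``routine'' argument from the product decomposition of $\calf$. The assertion that ``conjugation by $H_j$ centralizes $T_i$'' is unjustified: $H_j$ is generated by conjugates ${}^gT_j$ with $g\notin S$, and such conjugates need not normalize $T_i$ at all; the fusion system only records conjugation maps between subgroups of $S$, so no elementary manipulation of $\calf=\xxx\cale$ yields $[H_i,H_j]\le O_{2'}(G)$. That containment for commuting strongly closed $2$-subgroups is precisely the deep content of Goldschmidt's theorem, and it is the one nontrivial input of the paper's proof; once it is quoted correctly, the remaining steps (multiplication map $\xxx{H}\to G$ with odd-order kernel, trivial because $O_{2'}(H_i)\le O_{2'}(G)=1$; image equal to $\gen{S^G}=G$; then $T_i\in\syl2{H_i}$ and $\calf_{T_i}(H_i)=\cale_i$ by comparing the two product decompositions) are as in the paper. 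Note also that your final ``directness'' argument quietly assumes $S\cap H_i=T_i$ (to locate a Sylow $2$-subgroup of $H_i\cap\prod_{j\ne i}H_j$), which again is only available after $G=\xxx{H}$ has been established; the kernel argument avoids this circularity.
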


\begin{proof} If $\Gamma$ is an arbitrary finite group that realizes 
$\calf$, then $\calf$ is also realized by $\Gamma/O_{2'}(\Gamma)$, and by 
$G\defeq O^{2'}(\Gamma/O_{2'}(\Gamma))$ since $O^{2'}(\calf)=\calf$. 

For each $1\le i\le k$, let $H_i$ be the normal closure of 
$T_i$ in $G$. By \cite[Theorem A]{Goldschmidt} and since $O_{2'}(G)=1$, the 
$H_i$ commute pairwise in $G$. Hence there is a homomorphism $I$ from $\xxx{H}$ 
to $G$, its kernel has odd order since $S=\xxx{T}$, and $I$ is injective 
since $O_{2'}(H_i)=1$ for each $i$ (since $O_{2'}(G)=1$). 

By construction, $\Im(I)$ is the normal closure of $S$ in $G$, hence equal 
to $G$ since $O^{2'}(G)=G$. Thus $G=\xxx{H}$. Also, $T_i\le H_i$ for each 
$i$ by construction, and $T_i\in\syl2{H_i}$ since $S\in\syl2G$. Also, 
$\calf_{T_i}(H_i)\le\cale_i$ for each $i$, with equality since 
$\calf=\prod_{i=1}^k\cale_i$ and 
$\calf=\calf_S(G)=\prod_{i=1}^k\calf_{T_i}(H_i)$.
\end{proof}

Thus under these assumptions ($p=2$ and $O^{2'}(\calf)=\calf$), two 
distinct direct factor decompositions of $\calf$ give rise to two distinct 
factorizations of $G$, and the factors of $G$ are indecomposable if those 
of $\calf$ are. Since the two factorizations of $G$ are linked by a normal 
automorphism, so are those of $\calf$.

This argument can be extended to one that applies to arbitrary realizable 
fusion systems over finite $2$-groups, but as far as we can see, only with 
the help of additional tools such as the notion of tameness of a fusion 
system and \cite[Theorem C]{BMOR} (all realizable fusion systems are tame). 
When $p$ is odd, a result analogous to Lemma \ref{l:G=prod} can be shown 
using Theorem 1.2 in \cite{FF}. The proofs of both of these last two 
mentioned theorems require the classification of finite simple groups.


\end{New}

\bigskip


\end{document}